\documentclass[journal]{IEEEtran}

\usepackage[utf8]{inputenc}
\usepackage[T1]{fontenc}
\usepackage{microtype}
\usepackage{amsmath,amssymb,amsthm,mathtools}
\usepackage{mathrsfs}
\usepackage{cite}
\usepackage{hyperref}

\hypersetup{
  hidelinks,
  pdftitle={A First-Order Entropy Law for Canonical T-Complexity of Finite-Alphabet i.i.d. Sources},
  pdfauthor={Thomas Schürmann},
  pdfkeywords={T-complexity, T-codes, canonical decomposition, i.i.d. sources, exponential integral, renewal theory}
}
\allowdisplaybreaks[1]
\theoremstyle{plain}
\newtheorem{theorem}{Theorem}[section]
\newtheorem{lemma}[theorem]{Lemma}
\newtheorem{proposition}[theorem]{Proposition}

\theoremstyle{definition}
\newtheorem{definition}[theorem]{Definition}
\newtheorem{example}[theorem]{Example}
\theoremstyle{remark}
\newtheorem{remark}[theorem]{Remark}

\newcommand{\A}{\mathcal A}
\newcommand{\F}{\mathcal F}
\newcommand{\Wcal}{\mathscr W}
\newcommand{\Psrc}{\mathbb P_{\rm src}}
\newcommand{\Esrc}{\mathbb E_{\rm src}}
\newcommand{\Pideal}{\mathbb P}
\newcommand{\Pforb}{\widetilde{\mathbb P}}
\newcommand{\Pbound}{\mathbb P_N}

\newcommand{\Eforb}{\widetilde{\mathbb E}}
\newcommand{\Paux}{\mathbb P_{\mathbf p}}
\newcommand{\Eaux}{\mathbb E_{\mathbf p}}
\newcommand{\muP}{\mu_{\mathbf p}}
\newcommand{\hp}{h(\mathbf p)}
\newcommand{\rhoP}{\rho_{\mathbf p}}
\newcommand{\alphaP}{\alpha_{\mathbf p}}
\newcommand{\lbar}{\overline\ell}
\newcommand{\one}{\mathbf 1}
\newcommand{\cT}{c_{\rm T}}

\title{A First-Order Entropy Law for Canonical T-Complexity of Finite-Alphabet i.i.d. Sources}
\author{Thomas Sch\"urmann\\\it{D\"usseldorf, Germany}%
\thanks{Thomas Sch\"urmann, Independent Researcher, 40223 D\"usseldorf, Germany}}

\begin{document}
\maketitle

\begin{abstract}
Let $W_N$ be an exact length-$N$ block from a strictly positive i.i.d.\ source $\mathbf p$ on a fixed finite alphabet.  We prove that the canonical T-complexity $\cT$ satisfies
\begin{equation*}
 \frac{\cT(W_N)}{e^{-\gamma}\hp N/\log N}\longrightarrow1
\end{equation*}
in probability and in $L^r$ for every fixed $1\le r<\infty$, where $\hp$ is the source entropy in nats and $\gamma$ is the Euler--Mascheroni constant.  The proof combines an exact length budget for canonical recovery, a critical-scale $E_1$ estimate for an ideal backward chain, and an exact finite-block boundary representation.  An exact Doob-transform identity expresses the finite-boundary law relative to the ideal law conditioned at each step to avoid the current history-dependent successor codeword.  A history-uniform renewal estimate then makes the telescoping endpoint density uniformly asymptotic to one, so no one-step approximation errors accumulate.
\end{abstract}

\begin{IEEEkeywords}
T-complexity, T-codes, canonical decomposition, finite-alphabet i.i.d.\ sources, exponential integral, renewal theory, moment convergence.
\end{IEEEkeywords}

\section{Introduction and main result}\label{sec:intro}

Canonical T-complexity is a deterministic statistic of a finite word, obtained from the canonical generalized T-decomposition of recursively generated complete prefix codes \cite{Titchener1984,Titchener1996,Gunther1998,NicolescuTitchener1998,TitchenerEtAl2005}.  It is distinct from program-length complexity \cite{Kolmogorov1965} and from Lempel--Ziv phrase counts \cite{LempelZiv1976,ZivLempel1978}.  Exact decomposition algorithms were developed in \cite{YangSpeidel2005Jucs,YangSpeidel2005,RebenichEtAl2014}, while extremal T-complexity was studied in \cite{Speidel2008,SpeidelGulliver2012,ClarkTeutsch2015}.

Random or simple augmentation models already point to the constant $e^{-\gamma}$ \cite{HamanoYamamoto2009,Hamano2009,GulliverSpeidel2012Random}.  The present theorem concerns a different object: the pair list produced by canonical parsing of an exact fixed-length block from an arbitrary strictly positive finite-alphabet product law.  The ideal augmentation dynamics therefore cannot be used directly, because the finite left boundary and the history-dependent successor exclusion must both be retained.  We handle them by an exact boundary-chain representation and a renewal estimate uniform over all admissible histories.

Let $\A$ be a finite alphabet with $|\A|=d\ge2$, and let $\mathbf p=(p_a)_{a\in\A}$ satisfy $p_a>0$ and $\sum_a p_a=1$.  For $w=w_1\cdots w_n\in\A^+$ set
\begin{equation}\label{eq:source-weight}
 \muP(w):=\prod_{i=1}^n p_{w_i},
 \qquad
 \lambda(w):=-\log\muP(w),
\end{equation}
and write
\begin{equation}\label{eq:entropy}
 \hp:=-\sum_{a\in\A}p_a\log p_a,
 \qquad
 H_2(\mathbf p):=\frac{\hp}{\log 2}.
\end{equation}
All unsubscripted logarithms are natural, and $\gamma$ denotes the Euler--Mascheroni constant.  Let $W_N$ have the product law
\begin{equation}\label{eq:block-law}
 \Psrc\{W_N=w\}=\muP(w),\qquad w\in\A^N.
\end{equation}

\begin{theorem}[First-order law]\label{thm:main}
Let $\cT$ be the canonical T-complexity defined in Section~\ref{sec:deterministic}.  Then
\begin{equation}\label{eq:main-prob}
 \frac{\cT(W_N)}{e^{-\gamma}\hp N/\log N}
 \xrightarrow[N\to\infty]{\Psrc}1.
\end{equation}
For every fixed $1\le r<\infty$,
\begin{equation}\label{eq:main-moments}
 \Esrc\left|
 \frac{\cT(W_N)}{e^{-\gamma}\hp N/\log N}-1
 \right|^r\longrightarrow0,
\end{equation}
where $\Esrc$ denotes expectation under \eqref{eq:block-law}.  Equivalently,
\begin{equation}\label{eq:main-bits}
 \cT(W_N)=e^{-\gamma}H_2(\mathbf p)
 \frac{N}{\log_2N}\bigl(1+o_{L^r}(1)\bigr).
\end{equation}
\end{theorem}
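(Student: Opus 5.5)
The proof will follow the three-part structure announced in the abstract; the real work lies in the third part.

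\emph{Step 1: reduce $\cT$ to a count of copy factors.} From the deterministic description in Section~\ref{sec:deterministic}, I would derive an exact length budget for canonical recovery. The canonical T-decomposition of $w\in\A^N$ produces a list of pairs $(p_i,k_i)$. Each pair contributes $\log_2(k_i+1)$ to $\cT$. The lengths of the successive T-prefixes add up, up to a boundary term, to exactly $N$. Parsing backwards from the right end, each step strips a codeword $p_i$, whose length is controlled by the current code, and $k_i$ is a copy factor. I would show that, with high probability, almost all pairs have $k_i=1$. Then $\cT(W_N)$ equals the number $M_N$ of backward steps up to a negligible error, and $\sum_{i\le M_N}\lambda(p_i)$ matches $N\hp$ by the law of large numbers for $\lambda$. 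This reduces the problem to a renewal-type question: how many steps does the backward chain take to consume a $\lambda$-budget of size $\hp N$?

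\emph{Step 2: analyse the ideal chain.} Consider the ideal backward chain under $\Pideal$, with no left boundary and no successor exclusion. Its state at step $m$ is the current code. The $\lambda$-weight of the stripped codeword has a law determined by the code's depth profile. At the critical scale, where the code has roughly $m$ codewords of typical weight $\log m$, I would show that the increment satisfies
\[
\Pideal\{\lambda(p_m)>\log m + t\}\approx E_1(e^{t}).
\]
This gives a mean increment $\log m+\gamma+o(1)$. Summing, $\sum_{i\le M}\lambda(p_i)=M\log M+\gamma M+o(M)$. Solving $M\log M(1+o(1))=\hp N$ at first order gives the correct logarithmic scale, and the constant $e^{-\gamma}$ then appears through the exponential-integral tail; I would need to check this carefully. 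A weak concentration bound via a martingale variance estimate would give convergence in probability for the ideal chain.

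\emph{Step 3: compare the finite block with the ideal chain (main obstacle).} The real law $\Psrc$ on $\A^N$ differs from $\Pideal$ in two ways. The left boundary is finite, and each step must avoid the current successor codeword, which depends on the whole history. Using the representation the abstract describes, I would write the finite-boundary law as a Doob $h$-transform of the ideal law conditioned at each step to avoid that successor. The Radon--Nikodym density then telescopes into a ratio of endpoint renewal densities $u(N-S_m)/u(N)$. The key estimate is a renewal theorem that holds uniformly over all admissible histories: $u(n)\to1/\text{mean}$, with the error controlled uniformly. The danger is that one-step errors compound over $M_N$ steps, and I would avoid this by bounding only the endpoint ratio, never the individual factors. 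To get the uniform renewal bound, I would first try coupling the increments with an aperiodic lattice or nonlattice comparison law, using the strict positivity of $\mathbf p$.

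\emph{Step 4: upgrade to $L^r$.} Convergence in probability then transfers to $\Psrc$. For convergence in $L^r$, I would use the deterministic bound $\cT(w)\le C N/\log N$, since a T-decomposition of a word of length $N$ has $O(N/\log N)$ pairs. The normalized sequence is therefore uniformly bounded, and bounded convergence in probability gives $L^r$ convergence for every finite $r$. Finally, \eqref{eq:main-bits} follows from $\hp/\log N=H_2(\mathbf p)/\log_2 N$.
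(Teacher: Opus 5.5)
Your Step~2 has a genuine error at exactly the point where the constant $e^{-\gamma}$ must be produced. The conditional mean information cost of one ideal step is not $\log m+\gamma+o(1)$. For a complete prefix code, Wald's identity (equivalently, differentiating \eqref{eq:F-product} at $s=0$) gives $\mathbb E[\lambda(P_{m+1})\mid\F_m]=\sum_{u\in C_m}q_u\lambda(u)=\hp\,\lbar_m$. Theorem~\ref{thm:mean-length} gives $\lbar_m\sim(e^{\gamma}/\hp)\log m$, so the increment has mean $e^{\gamma}\log m\,(1+o(1))$. The Euler constant therefore enters multiplicatively, not as an additive shift.

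With your increment, $\sum_{i\le M}(\log i+\gamma)=M\log M\,(1+o(1))=\hp N$ forces $M\sim\hp N/\log N$, i.e.\ first-order constant $1$. The term $\gamma M$ is lower order and cannot change the leading constant, so your remark that $e^{-\gamma}$ ``then appears through the exponential-integral tail'' contradicts your own computation.

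The picture behind the heuristic is also wrong in three ways:
\begin{itemize}
\item $|C[u,k]|=(k+1)(|C|-1)+1$, so $|C_m|$ grows at least geometrically, not like $m$.
\item The proposed tail $E_1(e^t)$ exceeds $1$ for $t\le-2$, so it is not a probability there.
\item $\lambda(P_{m+1})$ is not concentrated within $O(1)$ of $\log m$; it is spread on the scale $\log m$, with $1-F_m(v/\log m)\approx e^{-E_1(v)}$. The behaviour $e^{-E_1(v)}\sim e^{\gamma}v$ as $v\downarrow0$ is what produces the mean $e^{\gamma}\log m$.
\end{itemize}
What is missing is the paper's critical-scale argument:
\begin{itemize}
\item the product identity $1-F_n(s)=(1-F_0(s))\prod_jA_j(s)$;
\item comparison of $\Theta_n$ with the linear recursion $\Lambda_n=(I-\mathcal T)^n\Lambda_0$, using series damping, $\mathbb E\sum_{j\ge J}S_{3,j}\to0$, and martingale control of the nonlinear remainder;
\item the evaluation $\Lambda_n(s_n)=E_1(u_n)+o(1)$ at $s_n=u_n/\log n$ with $u_n\to0$;
\item the passage from $\prod_jA_j(s_n)$ to $\lbar_n=\prod_jA_j(0)$;
\item the sampling-fluctuation and non-simple-step estimates that yield $G_m\sim(e^{\gamma}/\hp)m\log m$.
\end{itemize}
Without this growth law, the hitting-index sandwich has nothing correct to sandwich.

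Your Step~3 has the right architecture, but four points need correcting or adding.
\begin{itemize}
\item The endpoint density is not a function of $N-S_m$ alone. The exact $h$-function is $\mathcal H(C,f,L)=\lbar_C B(C,f,L)/(1-q_f)$; both $\lbar_C$ and $(1-q_f)^{-1}$ are needed for the transform relative to $\Pforb$ to be exact.
\item The renewal estimate must hold with relative error $o(1)$ uniformly over all codes reachable within the length budget, even though $\lbar_C$ grows like $\log N$. A coupling argument is unlikely to deliver this. The paper obtains an error $KNe^{-cN^{\vartheta}}$ from $M_C(z)-1=(z-1)\mathcal Q_C(z)$, a uniform zero-free bound for $\mathcal Q_C$ on $|z|=\mathfrak r$ via the prefix count, and Cauchy estimates; this is valid only for residuals $L\ge\omega_N$.
\item You omit the bounded density $d\Pforb/d\Pideal\le K_{\mathbf p}$ (from $\sum_jq_{f_j}<\infty$), which transfers ideal-chain limits to $\Pforb$.
\item You omit the deterministic bound $\log_2(N+1)+\omega_N$ on the complexity of the final segment below the cutoff, where no renewal estimate is available.
\end{itemize}
Finally, the exact length budget $R_j=N-G_j$ suits the boundary chain, whose state variable is residual length, better than a $\lambda$-budget combined with a law of large numbers.
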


We use $\Psrc$, $\Pideal$, $\Pforb$, and $\Pbound$ for the source, ideal, successor-conditioned, and finite-boundary laws, respectively; $\Paux$ is an auxiliary i.i.d.\ law used only in estimates.  Section~\ref{sec:deterministic} establishes canonical recovery and deterministic bounds.  Section~\ref{sec:ideal} derives the ideal length and complexity growth at the critical sequence $s_n$, Section~\ref{sec:boundary} gives the exact finite-block representation and boundary transfer, and Section~\ref{sec:main-proof} completes the proof by a hitting-index sandwich and a deterministic bound on the final boundary contribution.

\section{Canonical recovery and deterministic bounds}\label{sec:deterministic}

For a finite prefix code $C\subset\A^+$, a word $u\in C$, and $k\ge1$, define the T-augmentation
\begin{equation}\label{eq:augmentation}
 C[u,k]:=\{u^js:s\in C\setminus\{u\},\ 0\le j\le k\}
 \cup\{u^{k+1}\}.
\end{equation}
A finite list $((u_1,k_1),\ldots,(u_m,k_m))$ is a valid prescription if $C_0=\A$, $u_i\in C_{i-1}$, $k_i\ge1$, and $C_i=C_{i-1}[u_i,k_i]$.  Its \emph{arity} is the number $m$ of augmentation pairs.  Two valid prescriptions are equivalent if they generate the same final code, and a prescription is canonical if it has minimum arity in its equivalence class \cite[pp.~446--448]{TitchenerEtAl2005}.  The backward parser below also produces the local successor-exclusion property
\begin{equation}\label{eq:canonical-condition}
 u_{i+1}\ne u_i^{k_i+1},\qquad 1\le i<m.
\end{equation}
This property is derived in Proposition~\ref{prop:recovery}; it is not used as a stand-alone characterization of canonicality.

\begin{lemma}[Augmentation]\label{lem:augmentation}
If $C$ is a finite complete prefix code, then $C[u,k]$ is again a finite complete prefix code.  Its displayed representations in \eqref{eq:augmentation} are unique.  Moreover,
\begin{equation}\label{eq:augmentation-weight}
 \sum_{v\in C[u,k]}\muP(v)=1.
\end{equation}
\end{lemma}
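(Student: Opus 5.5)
The plan is to prove the three assertions in turn: prefix-freeness, uniqueness of representations, and the weight identity, with completeness following from the weight identity. Throughout, finiteness is obvious: $C[u,k]$ has at most $(k+1)(|C|-1)+1$ elements.

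\emph{Prefix property and uniqueness.} Take two displayed words $x=u^js$ and $y=u^{j'}s'$, where each may instead be the special word $u^{k+1}$. Suppose $x$ is a prefix of $y$ (possibly equal to it). First consider $j\le j'$ with both of the form $u^js$. Cancelling $u^j$ shows that $s$ is a prefix of $u^{j'-j}s'$.
\begin{itemize}
\item If $j'>j$, then $s$ and $u$ are both prefixes of the same word, so one is a prefix of the other. Since $s,u\in C$ and $C$ is a prefix code, this forces $s=u$, which is excluded.
\item If $j'=j$, then $s$ is a prefix of $s'$, so $s=s'$.
\end{itemize}
If instead $j>j'$, cancel $u^{j'}$. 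Then $u^{j-j'}s$ is a prefix of $s'$, so $u$ is a prefix of $s'$, forcing $s'=u$, again excluded. The cases involving $u^{k+1}$ go the same way. Against $u^js$ with $j\le k$, after cancelling $u^j$ the words $u^{k+1-j}$ and $s$ are comparable, so $u$ and $s$ are comparable, and hence $s=u$, a contradiction. So a prefix relation between displayed words occurs only when they coincide with the same $(j,s)$. This gives prefix-freeness and uniqueness of the representation at the same time. The only subtlety is the lengths: all words in $C$ are nonempty, so cancellation is legitimate.

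\emph{Weight identity.} Use multiplicativity, $\muP(u^js)=\muP(u)^j\muP(s)$. Since $\muP$ sums to $1$ over the complete code $C$, we have $\sum_{s\in C\setminus\{u\}}\muP(s)=1-\muP(u)$. Write $q=\muP(u)$. Then
\begin{equation*}
\sum_{v\in C[u,k]}\muP(v)=(1-q)\sum_{j=0}^{k}q^j+q^{k+1}=1-q^{k+1}+q^{k+1}=1,
\end{equation*}
and by uniqueness there is no double counting. The fact that $\sum_{v\in C}\muP(v)=1$ for a complete prefix code is itself needed. Here I take completeness to mean that every sufficiently long word has a prefix in $C$. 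Under that definition, $\sum_{v\in C}\muP(v)=1$ follows by partitioning $\A^L$, for $L\ge\max|v|$, according to the unique prefix in $C$.

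\emph{Completeness of $C[u,k]$.} Take any word $w$ with $|w|\ge (k+1)|u|+\max_{c\in C}|c|$. Peel off the maximal number $j\le k+1$ of leading copies of $u$.
\begin{itemize}
\item If $j=k+1$, then $u^{k+1}$ is a prefix of $w$.
\item Otherwise the remainder is long enough to have a prefix $s\in C$, and $s\ne u$ by maximality of $j$, so $u^js$ is a prefix of $w$.
\end{itemize}
Equivalently, one could apply the Kraft equality with the finite-length partition argument to the code just constructed.

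The main (mild) obstacle is keeping the case analysis in the prefix argument clean. I will organize it by comparing exponents and always cancelling the common power of $u$.
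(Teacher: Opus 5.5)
Your proof is correct. For the prefix property and uniqueness you follow essentially the paper's argument. The paper states it in one line at the level of $C$-factor sequences: by unique factorization over $C$, the new words correspond to the sequences $u^js$ with $s\ne u$, $0\le j\le k$, and $u^{k+1}$, none of which is a proper prefix of another. Your cancellation case analysis writes this out explicitly. Your weight computation is identical to the paper's.

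The genuine difference is completeness. The paper reruns the same geometric-sum calculation with the uniform weights $d^{-|v|}$, obtains Kraft sum one, and then invokes the standard fact that a finite prefix code with Kraft sum one is complete. You instead verify the covering property directly. You peel off a maximal run of at most $k+1$ leading copies of $u$, apply completeness of $C$ to the remainder, and use maximality of the run to force $s\ne u$.

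Your route is self-contained and does not need the Kraft characterization of completeness. You also prove, via the partition of $\A^L$, that completeness of $C$ gives $\sum_{v\in C}\muP(v)=1$, which the paper uses tacitly. The paper's route is shorter, because one calculation yields both the weight identity and completeness.

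A minor point: your length threshold $(k+1)|u|+\max_{c\in C}|c|$ assumes that the ``sufficiently long'' threshold for $C$ can be taken to be $\max_{c\in C}|c|$. This holds: extend the word, and note that the covering codeword has length at most $\max|c|$. Alternatively, just use whatever threshold the completeness of $C$ provides.
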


\begin{proof}
Unique factorization over $C$ identifies the new codewords with the $C$-factor sequences $u^js$ ($s\ne u$, $0\le j\le k$) and $u^{k+1}$; none is a proper prefix of another.  If $q=\muP(u)$, then
\begin{equation*}
 \sum_{v\in C[u,k]}\muP(v)
 =(1-q)(1+q+\cdots+q^k)+q^{k+1}=1.
\end{equation*}
The same calculation with $d^{-|v|}$ gives Kraft sum one, hence completeness.
\end{proof}

\begin{definition}[Full-word recovery and T-complexity]\label{def:recovery}
Let $w\in\A^+$.  Start with $C_0=\A$ and factor $w$ over $C_0$.  Whenever the current factorization has at least two factors, let $s$ be its final factor and $u$ the factor immediately to the left of $s$.  Let $k\ge1$ be maximal such that the terminal old-factor block is $u^ks$; when $s=u$, this means a terminal run of $k+1$ copies of $u$.  Record $(u,k)$, replace $C$ by $C[u,k]$, and refactor the unchanged word.  Stop when the factorization consists of one codeword.  Set
\begin{align*}
 \pi_T(w)&=((u_1,k_1),\ldots,(u_m,k_m)),\\
 \cT(w)&:=\sum_{i=1}^m\log_2(k_i+1).
\end{align*}
\end{definition}

\begin{proposition}[Canonical decomposition interface]\label{prop:canonical-interface}
For every finite alphabet $\A$ and every $w\in\A^+$, the pair list generated by Definition~\ref{def:recovery} agrees step by step with the standard list T-decomposition parser.  It is therefore the unique canonical prescription of the unique generalized T-code for which $w$ is a maximal-length codeword.
\end{proposition}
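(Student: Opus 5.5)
We will prove the proposition by induction on the length of the recovery run, checking at each step that the rule in Definition~\ref{def:recovery} is the same as the corresponding step of the standard list T-decomposition parser of \cite{TitchenerEtAl2005,YangSpeidel2005}. We then carry over the known canonicity and uniqueness statements for that parser.

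The first step is to recall the standard parser in the same notation. Start from $C_0=\A$ and the factorization of $w$ over the current code. The parser reads the penultimate factor $u$ and the terminal maximal run $u^k s$. It records $(u,k)$ and passes to $C[u,k]$. Here one must make sure that $w$ refactored over $C[u,k]$ is again a concatenation of codewords. By the unique representations in Lemma~\ref{lem:augmentation}, each old factor block of the form $u^j s'$ with $s'\neq u$ and $j\le k$, or $u^{k+1}$, becomes a single new codeword. The block is read greedily from the left, so a run of more than $k+1$ copies of $u$ splits into blocks $u^{k+1}$. Because $k$ was chosen maximal, the terminal block $u^k s$ merges into one new factor.

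The second step is to show that the number of factors strictly decreases. This gives termination with a single factor, at which point $w\in C_m$. We also need $w$ to be a maximal-length codeword of $C_m$. This follows because the terminal block is always $u^k s$ with $s$ ending the word, so the last augmentation produces $w=u_m^{k_m}s$. By induction, $s$ is a longest word of $C_{m-1}$, and $u_m^{k_m}$ times a longest word is a longest word of $C_m$.

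The third step derives the successor exclusion \eqref{eq:canonical-condition}. Suppose we had $u_{i+1}=u_i^{k_i+1}$. Then after step $i$ the penultimate factor would be $u_i^{k_i+1}$, and the merge at step $i+1$ would produce a terminal run of $u_i$ of length larger than $k_i$ in the old factorization. That contradicts the maximality of $k_i$. Given \eqref{eq:canonical-condition} and agreement with the standard parser, canonicity and uniqueness follow from the theory of generalized T-codes in \cite[pp.~446--448]{TitchenerEtAl2005}. That theory says a T-code has a unique minimum-arity prescription, namely the one in which consecutive pairs are never mergeable ($C[u,k][u^{k+1},k']$ collapses to $C[u,k+k'+1]$-type reductions). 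It also says the T-code for which a given word is maximal-length is determined by backward parsing.

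The main obstacle is the second half of the third step: showing that the recovered code is \emph{the unique} generalized T-code with $w$ maximal, rather than merely one such code. I would first try to argue this directly. Suppose $w$ is a maximal-length codeword of some T-code $C'$ with prescription $(u'_i,k'_i)$. Then the final pair $(u'_m,k'_m)$ is forced by the last two factors of $w$ over $C'_{m-1}$. Peeling off that pair and inducting backward shows that the pairs coincide with ours after the merging reductions. If the direct argument becomes too intricate, I would instead cite the cited decomposition results for this uniqueness step.
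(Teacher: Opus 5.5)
Your proposal does not actually prove the first sentence of the proposition. In Step~1 you ``recall the standard parser'' by restating Definition~\ref{def:recovery}, so the claimed step-by-step agreement is circular.

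The list parser the statement refers to works differently. Writing $w=xa$, it keeps $a$ as a dummy tail and operates on a token list to its left. At each pass it merges that list and discards the $k$ trailing copies of its rightmost token. The missing idea is the invariant the paper uses: after $i$ passes, $w=V_iU_i$, where $U_i=u_i^{k_i}\cdots u_1^{k_1}a\in C_i$ is represented by the recorded pairs together with the dummy tail, and the active list is exactly the ordered $C_i$-factorization of $V_i$. With this invariant one checks by induction three things:
\begin{itemize}
\item the rightmost active token is the factor immediately left of $U_i$;
\item the number of its trailing copies equals the maximal $k$ of Definition~\ref{def:recovery}, including the case $U_i=u$, where the definition sees a run of $k+1$ copies but records $k$, while the list parser never compares the tail;
\item the list merge yields the $C_{i+1}$-factorization of $V_{i+1}$, where $V_i=V_{i+1}u^k$.
\end{itemize}
Your Steps~1--3 instead reprove material belonging to Proposition~\ref{prop:recovery} (unique refactorization, termination, maximal length, successor exclusion), which this proposition does not assert.

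For the ``therefore'' part, the paper simply invokes the decomposition theorems of \cite{NicolescuTitchener1998}. Your main route instead uses \eqref{eq:canonical-condition} as a characterization of minimum arity, which the paper explicitly declines to do. The merge identity you quote is also incorrect: $C[u,k][u^{k+1},k']=C[u,(k+1)(k'+1)-1]$, not $C[u,k+k'+1]$. Your direct uniqueness sketch is circular, since reading $(u'_m,k'_m)$ off the factorization over $C'_{m-1}$ presupposes the code you are trying to identify.

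The argument works if you run it forward instead:
\begin{itemize}
\item A minimum-arity prescription satisfies \eqref{eq:canonical-condition}, because two consecutive pairs violating it can be merged into one by the corrected identity.
\item The maximal-length codewords of the code generated by $(u'_i,k'_i)$ are exactly $(u'_m)^{k'_m}\cdots(u'_1)^{k'_1}b$ with $b\in\A$.
\item Under \eqref{eq:canonical-condition}, at pass $j+1$ the $C'_j$-factor immediately left of the run $(u'_{j+1})^{k'_{j+1}}$ (if any) is the last $C'_j$-factor of $u'_{j+2}$, which is not $u'_{j+1}$. So Definition~\ref{def:recovery} applied to such a word returns exactly $(u'_i,k'_i)$.
\end{itemize}
Hence at most one prescription satisfying \eqref{eq:canonical-condition} has $w$ as a maximal-length codeword. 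Together with Proposition~\ref{prop:recovery}, this gives both uniqueness claims without any citation. Otherwise, cite the theorem as the paper does.
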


\begin{proof}
Write $w=xa$ with $a\in\A$.  The published parser retains $a$ as a dummy tail and operates on the active token list to its left \cite[Secs.~1.1--2]{YangSpeidel2005Jucs}.  Put $U_0:=a$ and, after $i$ recorded pairs, $U_i:=u_i^{k_i}\cdots u_1^{k_1}a$.  We claim that after $i$ passes one can write $w=V_iU_i$, where the active list is exactly the ordered $C_i$-factorization of $V_i$, while the recorded pairs together with the dummy tail represent the final factor $U_i\in C_i$.  This is immediate for $i=0$.

Assume the claim after $i$ passes.  If the active list is empty, both parsers stop with $w=U_i$.  Otherwise let $u$ be its rightmost factor, and let $k\ge1$ be maximal such that the current $C_i$-factorization ends in $u^kU_i$; if $U_i=u$, this is a terminal run of $k+1$ copies.  Thus both parsers select the same pair $(u,k)$.  The list merge groups the factors strictly to the left of this terminal block into words $u^js$ with $s\ne u$ and $0\le j\le k$, or into $u^{k+1}$.  These are precisely the codewords of $C_{i+1}=C_i[u,k]$, while the terminal block becomes $U_{i+1}=u^kU_i\in C_{i+1}$.  The invariant and the step-by-step equivalence follow by induction.

This argument establishes only the parser identity.  For finite alphabets, \cite[Theorem~6.1 and Corollaries~6.3, 6.4, and~6.8]{NicolescuTitchener1998} proves that the resulting decomposition determines the unique T-code having $w$ as a maximal-length codeword and the unique minimum-arity prescription in its equivalence class; see also \cite[pp.~446--450]{TitchenerEtAl2005}.  Hence the common output is the standard canonical T-prescription.
\end{proof}

\begin{proposition}[Recovery invariants]\label{prop:recovery}
Definition~\ref{def:recovery} terminates.  If $w=xa$ with $a\in\A$, set
\begin{equation*}
 U_0:=a,
 \qquad
 U_i:=u_i^{k_i}U_{i-1}.
\end{equation*}
Then, for every $i$,
\begin{equation}\label{eq:length-budget-partial}
 U_i\in C_i,
 \qquad
 |U_i|=1+\sum_{j=1}^ik_j|u_j|
 =\max_{v\in C_i}|v|.
\end{equation}
In particular,
\begin{equation}\label{eq:exact-budget}
 |w|=1+\sum_{j=1}^m k_j|u_j|.
\end{equation}
The pair list is independent of the last source symbol:
\begin{equation}\label{eq:terminal-invariance}
 \pi_T(xa)=\pi_T(xb),\qquad a,b\in\A.
\end{equation}
More generally, every valid prescription satisfies
\begin{equation}\label{eq:valid-max-bound}
 \max_{v\in C_i}|v|\le 1+\sum_{j=1}^i k_j|u_j|,
\end{equation}
and no selected prefix can occur twice.
\end{proposition}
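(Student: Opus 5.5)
We argue by induction on the number $i$ of recorded passes, using the invariant from the proof of Proposition~\ref{prop:canonical-interface}. After $i$ passes we have $w=V_iU_i$ with $U_i\in C_i$. Moreover, the active factors of $V_i$ are the $C_i$-factorization of $V_i$. Termination is immediate from this: each pass merges the terminal block $u^kU_{i-1}$, which has $k+1\ge2$ old factors, into the single new factor $U_i$, and it only groups the other factors. So the total number of factors in the current factorization strictly decreases, and the procedure stops after at most $|w|$ passes. The recursion $U_i=u_i^{k_i}U_{i-1}$ with $U_0=a$ gives $|U_i|=1+\sum_{j\le i}k_j|u_j|$ directly. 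At termination $U_m=w$, which yields \eqref{eq:exact-budget}.

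For the maximality claim in \eqref{eq:length-budget-partial}, we first prove the general bound \eqref{eq:valid-max-bound} for any valid prescription by induction. For $C_0=\A$ the maximum length is $1$. For $C_i=C_{i-1}[u_i,k_i]$, every codeword has the form $u_i^js$ with $j\le k_i$ and $s\in C_{i-1}$, or is $u_i^{k_i+1}$. Since $u_i\in C_{i-1}$, we have $|u_i|\le M_{i-1}:=\max_{C_{i-1}}|v|$, so the length is at most $k_i|u_i|+M_{i-1}$. Hence $M_i\le M_{i-1}+k_i|u_i|$, and iterating gives \eqref{eq:valid-max-bound}. In the recovery procedure $U_i\in C_i$ attains this bound, so equality holds, which is exactly \eqref{eq:length-budget-partial}.

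For \eqref{eq:terminal-invariance}, we show that the pass choices never inspect the identity of the last symbol. Replacing $a$ by $b$ changes only the final factor: $U_0=a$ becomes $b$. At each stage the rightmost active factor $u$ and the maximal $k$ are determined by $V_i$ alone, with one exception, the case $U_i=u$. That case can only occur when $i=0$ and $u$ is a single letter. To handle it, we check that "terminal run of $k+1$ copies" is treated consistently with $u^kU_0$: in both cases the selected $k$ is the number of copies of $u$ immediately left of the final factor. Thus the rule depends only on $V_0=x$. Inductively $V_i$ and the pairs coincide for $xa$ and $xb$, since the regrouping of $V_i$ over $C_i[u,k]$ does not involve $U_i$.

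The last claim, that no selected prefix can occur twice, means $u_i\ne u_j$ for $i\ne j$, and that $u_{i+1}\ne u_i^{k_i+1}$. We get this from the structure of the augmentation. After step $i$ the word $u_i$ is no longer in $C_i$: its extensions $u_i s$ all lie in $C_i$, so $u_i$ is a proper prefix of a codeword. Every later code $C_j$ with $j\ge i$ consists of concatenations of $C_i$-words, so $u_i\notin C_j$ and $u_i$ cannot be chosen again. Validity requires $u_j\in C_{j-1}$. We expect the main obstacle to be the bookkeeping in the terminal-invariance step, where the special case $s=u$ must be reconciled with the dummy-tail convention. If this fails, we would fall back on the parser identity of Proposition~\ref{prop:canonical-interface}, where $a$ is explicitly a dummy token never read.
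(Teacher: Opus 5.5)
Your proposal follows the paper's own argument essentially step by step, and it is correct in substance: a strictly decreasing factor count, the length identity from the recursion for $U_i$, the one-step bound $\max_{v\in C_i}|v|\le\max_{v\in C_{i-1}}|v|+k_i|u_i|$ with equality attained by $U_i$, a simultaneous run on $xa$ and $xb$, and non-reselection because $u_i$ becomes a proper prefix of a current codeword. Three details need repair.

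First, your claim that the case $U_i=u$ can only occur at $i=0$ is false. For $w=0101$ the first pass records $(0,1)$, so $C_1=\{1,01,00\}$ and the factorization is $01|01$. Thus $U_1=01$ equals the rightmost active factor, and the second pair is $(01,1)$. This does not break terminal invariance. Your reconciliation says that under either convention $k$ is the length of the maximal terminal run of $u$ inside the active list, and that check is local, so it holds verbatim at every pass. State it for all $i$, not only for $i=0$.

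Second, \emph{no selected prefix can occur twice} means only $u_i\ne u_j$ for $i\ne j$. The successor exclusion $u_{i+1}\ne u_i^{k_i+1}$ is a separate property, which you announce but never prove. The paper derives it here from maximality of the terminal run. The old factor immediately left of the run, when present, is some $t\ne u$, so the last new factor of the regrouped left part is $u^jt\ne u^{k+1}$. The same observation guarantees that the left part regroups completely over $C[u,k]$, with no dangling $u^j$. Your termination argument imports this from the invariant of Proposition~\ref{prop:canonical-interface}.

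Third, passing from ``$C_j$ consists of concatenations of $C_i$-words'' to $u_i\notin C_j$ needs one more line. If $u_i=c_1\cdots c_r$ with $c_l\in C_i$, then $c_1$ is a proper prefix of $u_i^{k_i+1}\in C_i$, contradicting the prefix property.
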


\begin{proof}
Suppose the current $C$-factorization has terminal form $\tau u^ks$, with $k$ maximal.  Under $C[u,k]$ the block $u^ks$ is one codeword (the distinguished word $u^{k+1}$ when $s=u$).  The preceding factors group uniquely from left to right into $u^jt$ ($0\le j\le k$, $t\ne u$) or $u^{k+1}$.  Maximality of the terminal run implies that the last new factor of the preceding part, when present, is not $u^{k+1}$.  Thus refactorization is unique, the next selected prefix satisfies \eqref{eq:canonical-condition}, and the number of factors decreases by at least one.  The algorithm therefore terminates and produces a valid prescription with the stated local reduction property.

The assertions in \eqref{eq:length-budget-partial} follow by induction.  If $U_{i-1}$ is longest in $C_{i-1}$, then $U_i=u_i^{k_i}U_{i-1}$ belongs to $C_i$, and every other new codeword has length at most $k_i|u_i|+|U_{i-1}|$.  At termination $U_m=w$, giving \eqref{eq:exact-budget}.

For terminal invariance, run the algorithm simultaneously on $xa$, $a\in\A$.  After each common number of passes, the factorization has a common ordered left-factor list followed by $U_i(a)=u_i^{k_i}\cdots u_1^{k_1}a$.  The last active factor and its terminal run are therefore independent of $a$, and the preceding list is regrouped identically.  Induction proves \eqref{eq:terminal-invariance}.

For an arbitrary valid prescription, augmentation increases the maximum codeword length by at most $k_i|u_i|$, which proves \eqref{eq:valid-max-bound} by induction.  After selecting $u$, the word $u^{k+1}$ is a current codeword, so $u$ is a proper prefix of a current codeword and cannot itself be a codeword.  Later augmentations preserve a current extension of $u$.  Hence $u$ can never be selected again.
\end{proof}

\begin{example}[A complete binary recovery]\label{ex:recovery}
For $\A=\{0,1\}$ and $w=00100$, vertical bars marking the current code factors, the recovery is
\begin{equation*}
 0|0|1|0|0
 \xrightarrow{(0,1)}00|1|00
 \xrightarrow{(1,1)}00|100
 \xrightarrow{(00,1)}00100.
\end{equation*}
The successive codes are $C_1=\{1,01,00\}$ and $C_2=\{01,101,00,100,11\}$.  The distinguished successors created at the three steps are $00$, $11$, and $0000$; the first two are the next-step exclusions.  Thus $|w|=1+1+1+2=5$ and $\cT(w)=3$.  Replacing only the last symbol gives $00101$ and the same three pairs, illustrating \eqref{eq:terminal-invariance}.
\end{example}

\begin{remark}[Implementation]\label{rem:implementation}
Definition~\ref{def:recovery} is a mathematical parser specification and need not materialize the evolving code.  The dummy-tail list representation computes the same pair sequence compactly \cite[Secs.~1.1--2]{YangSpeidel2005Jucs}; an $O(N\log N)$ implementation is analyzed in \cite[Secs.~IV--V]{YangSpeidel2005}.
\end{remark}

\begin{lemma}[Number of selected prefixes]\label{lem:prefix-count}
For a valid prescription let
\begin{equation*}
 a_n:=\#\{i:|u_i|=n\}.
\end{equation*}
Then
\begin{equation}\label{eq:prefix-count}
 a_n\le \frac{d^n}{n}+\sum_{h=1}^{\lfloor n/2\rfloor}d^h
 =\frac{d^n}{n}+O_d(d^{n/2}).
\end{equation}
\end{lemma}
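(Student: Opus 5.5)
The plan is to combine the distinctness statement at the end of Proposition~\ref{prop:recovery} with a conjugacy-class count. By that proposition, no selected prefix occurs twice, so $a_n$ is at most the number of distinct words of length $n$ that can appear among $u_1,\dots,u_m$. I split $\A^n$ into primitive words and periodic words $u=v^{n/h}$ with $h\mid n$ and $h\le n/2$.

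The periodic part is bounded crudely. Each such word is determined by its root $v$ of length $h\le\lfloor n/2\rfloor$, so there are at most $\sum_{h=1}^{\lfloor n/2\rfloor}d^h=O_d(d^{n/2})$ of them. This accounts for the second term of \eqref{eq:prefix-count}.

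For primitive words, the conjugacy classes (necklaces) have exactly $n$ elements each. Hence there are at most $d^n/n$ classes, and the bound follows once I prove the key claim: \emph{a valid prescription selects at most one word from each primitive conjugacy class.} I would prove this by contradiction. Suppose $u=xy$ is selected at step $i$ (with $x,y\ne\varnothing$), and a conjugate $u'=yx\ne u$ is selected at a later step $j>i$. Then $u'\in C_{j-1}$, and every word of $C_{j-1}$ has a unique factorization over $C_{i-1}$ and over $C_i$ by the unique-representation part of Lemma~\ref{lem:augmentation}.

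The idea is to look at the periodic infinite word $u'^{\,\infty}=y\,u^\infty$. On one side, since $u'$ and then $u'^{k_j+1}$ are codewords of the complete prefix codes $C_{j-1}$ and $C_j$, the word $y\,u^{\infty}$ parses over $C_{j-1}$ with all cut points at multiples of $n$ after the start. On the other side, refining this parse to $C_{i}$ and then to $C_{i-1}$, I would track how the $C_{i-1}$-factorization of the tail $u^\infty=u\,u\,u\cdots$ must be grouped in $C_i$. Its tail is forced to consist of copies of the single codeword $u^{k_i+1}$, because $u$ itself is no longer a codeword and every $C_i$-word beginning with $u$ has the form $u^js$ or $u^{k_i+1}$. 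Comparing the two parses of the same periodic word, using the prefix property and the fact that all $C_j$ refine to $C_i$, should force the cut points of period $n$ to coincide with those of $u^\infty$ up to a shift by $|y|$. That would make $u$ equal to a nontrivial rotation of itself, contradicting primitivity.

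I expect this synchronization argument to be the main obstacle. Prefix codes need not be synchronizing, and the refinement between $C_{i-1}$, $C_i$ and $C_{j-1}$ must be handled carefully at the left boundary $y$. My first attempt is an induction on $j-i$ with an explicit invariant: for every $\ell\ge i$, no codeword of $C_\ell$, factored over $C_{i-1}$, contains a factor equal to a proper rotation of $u$ that is aligned with a $u$-run. If this direct invariant proves too fragile, the fallback is the known structure of T-codes, in which the selected prefixes are characterized by cyclic-equivalence constraints, as in the counting behind the maximal T-complexity results of \cite{Speidel2008,SpeidelGulliver2012}. With the claim in hand, $a_n\le d^n/n+\sum_{h\le n/2}d^h$ follows immediately.
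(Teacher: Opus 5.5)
Your reduction is sound. Distinctness (Proposition~\ref{prop:recovery}) bounds the periodic words of length $n$ by $\sum_{h\le n/2}d^h$, and there are at most $d^n/n$ primitive conjugacy classes. But all of the lemma's content then sits in the claim that a valid prescription selects at most one word from each primitive conjugacy class, and you do not prove it. Worse, the synchronization sketch cannot prove it as set up. Its ingredients are the prefix property, unique refinement $C_{j-1}\to C_i\to C_{i-1}$, $u\notin C_i$, and the shape of $C_i$-words beginning with $u$. These hold for a single augmentation of an \emph{arbitrary} complete prefix code, and there the claim is false. Over $\{0,1\}$, $\{00,01,10,11\}[01,1]=\{00,10,11,0100,0110,0111,0101\}$ contains $10$, so $01$ and its conjugate $10$ could be selected at consecutive steps. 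The parse of $1(01)^\infty$ from the left cuts at even positions, and the parse of the tail $(01)^\infty$ cuts at odd ones, so comparing parses gives no contradiction. An induction on $j-i$ starting at step $i$ has the same defect: its base case is a property of $C_{i-1}$ that only the history from $C_0=\A$ can supply. The pointer to the T-complexity literature is not a substitute for an argument.

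One way to close the gap is to count circular factorizations. For primitive $u$ of length $n$, let $\Phi_t$ be the number of pairs $(w,F)$ with $w$ a rotation of $u$ and $\varnothing\ne F\subseteq\mathbb Z/n$ a set of cut points whose circular arcs all lie in $C_t$. Then $\Phi_0=n$, since over $\A$ every word has exactly one circular factorization. Under $C\to C[v,k]$, every circular factorization over $C[v,k]$ refines uniquely to one over $C$. A cyclic sequence of $C$-words containing a piece $\ne v$ regroups in exactly one way. An all-$v$ cyclic sequence of $m$ pieces regroups in $k+1$ or $0$ ways according as $(k+1)\mid m$ or not, and for rotations of a primitive word it forces $m=1$ and $v$ conjugate to $u$. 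Hence $\Phi_t$ is unchanged unless a conjugate of $u$ is selected, in which case it drops by exactly $n$. Since $\Phi_t\ge n$ whenever some conjugate of $u$ lies in $C_t$, at most one conjugate is ever selected.

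This is the per-class version of the paper's argument. With $L_C(z)=\sum_{w\in C}z^{|w|}$, the quantity $-n[z^n]\log(1-L_C(z))$ counts exactly such circular factorizations over all of $\A^n$, which is why $\beta_n\le d^n$. The paper works only with this aggregate and uses distinctness only for lengths $\le n/2$, to absorb the negative terms $(k_i+1)\ell_i$. It therefore never needs the per-class statement. Your route, once repaired this way, yields that structural refinement as a by-product.
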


\begin{proof}
Put $L_C(z)=\sum_{w\in C}z^{|w|}$.  An augmentation at length $\ell$ and factor $k$ gives
\begin{equation*}
 1-L_{C[u,k]}(z)=(1-L_C(z))(1+z^\ell+\cdots+z^{k\ell}).
\end{equation*}
Thus, for the final code $C$,
\begin{equation*}
 G(z):=\prod_i(1+z^{\ell_i}+\cdots+z^{k_i\ell_i})
 =\frac{1-L_C(z)}{1-dz},
 \qquad \ell_i=|u_i|.
\end{equation*}
All coefficient calculations are formal.  Since $\log(1-L_C(z))$ has nonpositive coefficients,
\begin{equation*}
 \beta_n:=n[z^n]\log G(z)\le d^n.
\end{equation*}
On the other hand,
\begin{equation*}
 \beta_n=\sum_{\ell_i\mid n}\ell_i
 -\sum_{(k_i+1)\ell_i\mid n}(k_i+1)\ell_i.
\end{equation*}
The first sum contains $na_n$.  Each term in the second is at most $n$ and has $\ell_i\le n/2$.  By Proposition~\ref{prop:recovery}, at most $d^h$ distinct selected words have length $h$.  Hence
\begin{equation*}
 na_n\le d^n+n\sum_{h\le n/2}d^h,
\end{equation*}
which is \eqref{eq:prefix-count}.
\end{proof}

\begin{lemma}[Uniform deterministic envelope]\label{lem:global-envelope}
There is $K_d<\infty$ such that, for every $N\ge2$ and every $w\in\A^N$,
\begin{equation}\label{eq:global-envelope}
 \cT(w)\le K_d\frac{N}{\log N}.
\end{equation}
\end{lemma}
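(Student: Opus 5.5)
We prove \eqref{eq:global-envelope} by combining the exact length budget \eqref{eq:exact-budget} with the prefix count of Lemma~\ref{lem:prefix-count}. The idea is that many pairs force many long selected prefixes, and long prefixes consume length. Fix $w\in\A^N$ with canonical pairs $(u_i,k_i)$, $1\le i\le m$, and put $\ell_i=|u_i|$. By \eqref{eq:exact-budget},
\begin{equation*}
 \sum_{i=1}^m k_i\ell_i=N-1<N .
\end{equation*}
Since $\log_2(k+1)\le k$, it suffices to bound $\sum_i\log_2(k_i+1)$. We split the pairs at a threshold length $L:=\lfloor \tfrac12\log_d N\rfloor$ (assume $N$ large; small $N$ are absorbed into $K_d$, since trivially $\cT(w)\le N$).

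\emph{Short prefixes.} Consider the pairs with $\ell_i\le L$. Lemma~\ref{lem:prefix-count} (or simply the distinctness in Proposition~\ref{prop:recovery}) shows that there are at most $\sum_{n\le L}a_n\le \sum_{n\le L}d^n\le 2d^L\le 2\sqrt N$ of them. For each, $\log_2(k_i+1)\le\log_2(N+1)$ because $k_i\le N$. Their total contribution is therefore $O(\sqrt N\log N)=o(N/\log N)$.

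\emph{Long prefixes.} Consider the pairs with $\ell_i>L$. Here $\log_2(k_i+1)\le k_i$, so
\begin{equation*}
 \sum_{\ell_i>L}\log_2(k_i+1)\le\sum_{\ell_i>L}k_i\le \frac1{L+1}\sum_i k_i\ell_i\le\frac{N}{L+1}\le \frac{2\log d}{\log N}\,N .
\end{equation*}
Adding the two parts gives $\cT(w)\le (2\log d+o(1))N/\log N$ uniformly in $w$. Taking $K_d$ large enough also covers all $N$ below the threshold where the $o(1)$ term is at most $1$.

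\emph{Main obstacle.} The only delicate point is the count in the short-prefix regime, namely making sure that each selected word contributes at most once. This is exactly the statement in Proposition~\ref{prop:recovery} that no selected prefix occurs twice, so at most $d^n$ selected words have length $n$. The crude bound $k_i\le N$ then suffices, because the number of short prefixes is only $O(\sqrt N)$. If one wanted the sharper constant suggested by Lemma~\ref{lem:prefix-count}, one would instead take $L$ near $\log_d N-\log_d\log N$. That refinement is not needed for the envelope.
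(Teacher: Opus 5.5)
Your proof is correct and follows essentially the same route as the paper: you use the same threshold $\lfloor\tfrac12\log_dN\rfloor$, the length budget \eqref{eq:exact-budget} for the long prefixes, and the per-term bound $\log_2(N+1)$ for the short ones. The only difference is that you count short prefixes by distinctness alone ($\le2\sqrt N$ of them). This gives an $O(\sqrt N\log N)$ short-prefix contribution rather than the paper's $O_d(\sqrt N)$ obtained from Lemma~\ref{lem:prefix-count}, and both are negligible against $N/\log N$.
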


\begin{proof}
Let $J=\lfloor\tfrac12\log_dN\rfloor$.  For selected prefixes with $|u_i|>J$,
\begin{equation*}
 \sum_{|u_i|>J}\log_2(k_i+1)
 \le\sum_{|u_i|>J}k_i
 \le\frac{N-1}{J+1}.
\end{equation*}
For $|u_i|\le J$, each term is at most $\log_2(N+1)$, while Lemma~\ref{lem:prefix-count} gives
\begin{equation*}
 \#\{i:|u_i|\le J\}=O_d(d^J/J).
\end{equation*}
The short-prefix contribution is therefore $O_d(\sqrt N)$, and the long-prefix contribution is $O_d(N/\log N)$.  Enlarging the constant handles bounded $N$.
\end{proof}

\section{The ideal backward chain}\label{sec:ideal}

For a complete prefix code $C$, an i.i.d.\ product source parsed over $C$ produces independent codewords with probabilities $q_u=\muP(u)$.  Conditional on a codeword $u$ at the penultimate position, the number of consecutive copies of $u$ ending there is geometric with parameter $1-q_u$.  Ignoring the finite left boundary and, initially, the successor exclusion therefore gives the following ideal chain.

Let
\begin{equation*}
 \Omega=(\A^+\times\mathbb N_{\ge1})^{\mathbb N},
 \qquad
 \F_n=\sigma((P_j,K_j):1\le j\le n).
\end{equation*}
Starting from $C_0=\A$, a valid coordinate prefix determines $C_n$ recursively.  Under the ideal law $\Pideal$, conditionally on $C_n$,
\begin{align}
 \Pideal(P_{n+1}=u\mid\F_n)&=q_u,\label{eq:ideal-prefix}\\
 \Pideal(K_{n+1}=k\mid\F_n,P_{n+1}=u)&=(1-q_u)q_u^{k-1},\label{eq:ideal-copy}
\end{align}
where $q_u=\muP(u)$.  Thus
\begin{equation}\label{eq:ideal-joint}
 \Pideal(P_{n+1}=u,K_{n+1}=k\mid\F_n)
 =(1-q_u)q_u^k.
\end{equation}
Completeness gives $\sum_{u\in C_n}q_u=1$.  The ideal chain generates valid prescriptions but may violate \eqref{eq:canonical-condition}; the forbidden-word conditioning in Section~\ref{sec:boundary} enforces this successor-exclusion rule.

Set $G_0=1$ and $T_0=D_0=0$.  For $m\ge1$, define
\begin{equation}\label{eq:ideal-functionals}
\begin{aligned}
 G_m&:=1+\sum_{j=1}^mK_j|P_j|,\\
 T_m&:=\sum_{j=1}^m\log_2(K_j+1),\qquad
 D_m:=T_m-m\ge0.
\end{aligned}
\end{equation}

A step is simple when $K_j=1$ and non-simple otherwise; $D_m$ is the accumulated T-complexity excess over the $m$ simple-step baseline.  Put $\rhoP=\max_a p_a<1$ and $\alphaP=\min_a(-\log p_a)>0$.

\begin{lemma}[Weight envelope]\label{lem:weight-envelope}
There is $K_{\mathbf p}<\infty$ such that, for every finite $S\subset\A^+$ with $|S|\le m$, $m\ge2$,
\begin{align}
 \sum_{w\in S}\muP(w)&\le K_{\mathbf p}\log m,\label{eq:weight-one}\\
 \sum_{w\in S}|w|\muP(w)&\le K_{\mathbf p}(\log m)^2,\label{eq:weight-two}\\
 \sum_{w\in S}\lambda(w)\muP(w)&\le K_{\mathbf p}(\log m)^2.\label{eq:weight-three}
\end{align}
\end{lemma}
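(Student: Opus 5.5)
The plan is an extremal argument: for each of the three sums, the worst case is obtained by taking $S$ to be the $m$ words of largest weight. The condition $|S|\le m$ is then the only information available, and it is converted into a lower bound on how small the weights of $S$ can be forced to be.

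\textbf{Step 1 (counting heavy words).} Since $\muP(w)\le\rhoP^{|w|}$, words of length $n$ have weight at most $\rhoP^n$. Also
\begin{equation*}
 \sum_{|w|=n}\muP(w)=1 \quad\text{for every } n,
\end{equation*}
and there are $d^n$ words of length $n$. Group $\A^+$ into weight layers by the value of $\lambda(w)$. We have
\begin{equation*}
 \#\{w:\lambda(w)\le t\}\le \sum_{n\le t/\alphaP}d^n\le C e^{\kappa t},\qquad \kappa=\frac{\log d}{\alphaP}.
\end{equation*}
Conversely, the total weight of words with $\lambda(w)\in(t,t+1]$ is bounded by the mass of the set of lengths compatible with this window. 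This mass is at most the number of admissible lengths, namely
\begin{equation*}
 \#\{n:\alphaP n\le t+1\}\le \frac{t+1}{\alphaP}+1,
\end{equation*}
because $\lambda(w)\ge\alphaP|w|$ and each length class carries total mass one. The estimate is crude but linear in $t$.

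\textbf{Step 2 (threshold split).} Set $t_m:=\kappa'\log m$ with $\kappa'$ large, and split $S$ into $S_1=\{w\in S:\lambda(w)\le t_m\}$ and $S_2=S\setminus S_1$.
\begin{itemize}
 \item[] For $S_1$, drop the constraint $|S|\le m$. Summing $\muP(w)$ over all words with $|w|\le t_m/\alphaP$ gives at most $t_m/\alphaP+1=O(\log m)$, since each length contributes mass one. This yields \eqref{eq:weight-one} on $S_1$. Weighting each length $n$ by $n$ gives $\sum_{n\le t_m/\alphaP}n=O((\log m)^2)$, which yields \eqref{eq:weight-two}. Since $\lambda(w)\le t_m$ on $S_1$, \eqref{eq:weight-three} on $S_1$ is at most $t_m\cdot O(\log m)$.
 \item[] For $S_2$, each term has $\muP(w)<e^{-t_m}=m^{-\kappa'}$. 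Hence $\sum_{S_2}\muP\le m^{1-\kappa'}\le1$ once $\kappa'\ge1$.
\end{itemize}

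\textbf{Step 3 (the weighted tails).} The weighted sums over $S_2$ are where care is needed, since $|w|$ and $\lambda(w)$ are unbounded on $S_2$. Use that $x e^{-x}$ is decreasing for $x\ge1$: if $t_m\ge1$ then
\begin{equation*}
 \lambda(w)\muP(w)=\lambda(w)e^{-\lambda(w)}\le t_m e^{-t_m}\quad\text{on } S_2.
\end{equation*}
So the $S_2$-part of \eqref{eq:weight-three} is at most
\begin{equation*}
 m\,t_m m^{-\kappa'}=O(\log m)=O((\log m)^2).
\end{equation*}
Since $|w|\le\lambda(w)/\alphaP$, the same bound gives the $S_2$-part of \eqref{eq:weight-two}.

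\textbf{Assembling.} Bounded $m$ (where $t_m<1$) is handled by enlarging $K_{\mathbf p}$. Taking $K_{\mathbf p}$ as the maximum of the resulting constants gives all three inequalities.

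The main obstacle is ensuring that no bound depends on the unbounded length of words in $S$. The monotonicity of $xe^{-x}$ in Step 3 handles this, and the mass-one-per-length observation in Step 2 removes any need for the exponential counting bound of Step 1 beyond fixing the threshold.
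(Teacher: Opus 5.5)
Your proof is correct and is essentially the paper's argument. The paper also splits at $\lambda(w)=2\log m$, bounds the low-information part by $\sum_n\Paux\{\Sigma_n\le x\}\le x/\alphaP+1$ (the same as your ``one unit of mass per length'' count), and controls the tail via $\muP(w)\le m^{-2}$ and the monotonicity of $te^{-t}$ for $t\ge1$; your Step~1 exponential count and the ``extremal'' framing are never used, and taking $\kappa'=2$ gives $t_m\ge1$ for all $m\ge2$, so the bounded-$m$ caveat is unnecessary.
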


\begin{lemma}[Non-simple factors]\label{lem:nonsimple}
Under $\Pideal$,
\begin{align}
 D_m&=o_{\Pideal}(m),\label{eq:complexity-excess}\\
 \sum_{j=1}^m(K_j-1)|P_j|&=o_{\Pideal}(m\log m).\label{eq:length-excess}
\end{align}
\end{lemma}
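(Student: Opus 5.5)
We compare both sums with their conditional expectations under the ideal chain and show that each conditional mean is $o(1)$ per step for $D_m$ and $o(\log m)$ per step for the length excess. The key quantity is the prefix weight $q_{P_j}$. From \eqref{eq:ideal-copy}, conditionally on $\F_{j-1}$ and $P_j=u$,
\begin{equation*}
\Pideal(K_j\ge2\mid\F_{j-1},P_j=u)=q_u,
\qquad
\mathbb E(K_j-1\mid\F_{j-1},P_j=u)=\frac{q_u}{1-q_u}\le\frac{q_u}{1-\rhoP}.
\end{equation*}
Since $\log_2(k+1)-1\le k-1$ for $k\ge1$, we get $D_m\le\sum_{j\le m}(K_j-1)$. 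Hence
\begin{equation*}
\mathbb E D_m\le (1-\rhoP)^{-1}\,\mathbb E\sum_{j\le m}q_{P_j},
\end{equation*}
with $q_u\le\rhoP<1$ for all $u\in C_{j-1}\subset\A^+$. Both claims therefore reduce to bounding $\sum_j q_{P_j}$ and $\sum_j q_{P_j}|P_j|$.

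The central step uses Proposition~\ref{prop:recovery}: no prefix is selected twice. The path of selected words $S_m=\{P_1,\ldots,P_m\}$ is thus a set of at most $m$ distinct words. Lemma~\ref{lem:weight-envelope} then gives, \emph{deterministically},
\begin{equation*}
\sum_{j\le m}q_{P_j}\le K_{\mathbf p}\log m,
\qquad
\sum_{j\le m}|P_j|\,q_{P_j}\le K_{\mathbf p}(\log m)^2.
\end{equation*}
It follows that
\begin{equation*}
\mathbb E D_m\le\frac{K_{\mathbf p}\log m}{1-\rhoP}=o(m),
\end{equation*}
and Markov's inequality gives \eqref{eq:complexity-excess}. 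For \eqref{eq:length-excess}, condition on $\F_{j-1}$ and $P_j$: the summand $(K_j-1)|P_j|$ has conditional mean $|P_j|q_{P_j}/(1-q_{P_j})$. Summing and taking expectations yields
\begin{equation*}
\mathbb E\sum_{j\le m}(K_j-1)|P_j|\le\frac{K_{\mathbf p}(\log m)^2}{1-\rhoP}=o(m\log m).
\end{equation*}
Markov's inequality again finishes. The distinctness claim does need care, because it was stated for valid prescriptions of the parser. The same argument, that $u^{K+1}$ becomes a codeword so $u$ is no longer one, applies verbatim to any valid coordinate prefix generated by $\Pideal$. This is automatic since $P_{n+1}\in C_n$ by \eqref{eq:ideal-prefix}.

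The main obstacle I anticipate is only whether Lemma~\ref{lem:weight-envelope} is really strong enough as stated. If it is, the proof is essentially the two expectation bounds above, and even gives the rates $O(\log m)$ and $O((\log m)^2)$. If one preferred not to rely on it, I would prove the envelope directly. At most $m$ distinct words are involved, and the heaviest $m$ words under $\muP$ have $\lambda(w)\le\log m+O(1)$. By the renewal count $\sum_{w:\lambda(w)\in[t,t+1)}\muP(w)=O(1)$, uniformly in $t$, summing over $t\le\log m+O(1)$ gives $O(\log m)$. Weighting by $|w|\le\lambda(w)/\alphaP$ gives $O((\log m)^2)$.
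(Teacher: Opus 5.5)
Your proposal is correct and follows the paper's proof. You bound the conditional means of the per-step excesses by $Kq_{P_j}$ and $K|P_j|q_{P_j}$, use distinctness of the selected prefixes with Lemma~\ref{lem:weight-envelope} to get pathwise $O(\log m)$ and $O((\log m)^2)$ bounds, and finish by taking expectations and applying Markov's inequality; the only difference is that you dominate $\log_2(K_j+1)-1$ by $K_j-1$ instead of computing its conditional mean directly, which changes nothing of substance.
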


The proofs of Lemmas~\ref{lem:weight-envelope} and~\ref{lem:nonsimple} are in Appendix~\ref{app:ideal-estimates}.

For a finite complete prefix code $C$, set
\begin{equation}\label{eq:FM-def}
\begin{aligned}
 F_C(s)&:=\sum_{w\in C}\muP(w)^{1+s},\\
 M_C(z)&:=\sum_{w\in C}\muP(w)z^{|w|},\qquad
 \lbar_C:=M_C'(1).
\end{aligned}
\end{equation}
Write $F_n=F_{C_n}$, $M_n=M_{C_n}$, and $\lbar_n=\lbar_{C_n}$.  For $q_j=\muP(P_j)$ and $\ell_j=|P_j|$, put
\begin{align}
 A_j(s)&:=1+q_j^{1+s}+\cdots+q_j^{K_j(1+s)},\label{eq:A-def}\\
 Q_j(z)&:=1+q_jz^{\ell_j}+\cdots+(q_jz^{\ell_j})^{K_j}.\label{eq:Q-step-def}
\end{align}

\begin{proposition}[Product identities]\label{prop:product-identities}
For every $n$,
\begin{align}
 F_n(s)-1&=(F_0(s)-1)\prod_{j=1}^nA_j(s),\label{eq:F-product}\\
 M_n(z)-1&=(z-1)\prod_{j=1}^nQ_j(z),\label{eq:M-product}\\
 \lbar_n&=\prod_{j=1}^nA_j(0).\label{eq:lbar-product}
\end{align}
\end{proposition}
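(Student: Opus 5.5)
The three identities all come from induction on $n$, using a one-step identity for each augmentation $C_{j}=C_{j-1}[u,k]$ with $u=P_j$, $k=K_j$, $q=q_j=\muP(u)$ and $\ell=\ell_j=|u|$. The key input is the unique representation in Lemma~\ref{lem:augmentation}. Every new codeword is either $u^js$ with $s\in C_{j-1}\setminus\{u\}$ and $0\le j\le k$, or it is $u^{k+1}$. In either case $\muP$ is multiplicative and lengths are additive. So any generating function of the form $\Phi_C=\sum_{w\in C}\phi(w)$ with $\phi$ multiplicative over concatenation can be evaluated by summing over $j$ and $s$ directly.

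For \eqref{eq:F-product} take $\phi(w)=\muP(w)^{1+s}$ and write $x=q^{1+s}$. Then
\begin{equation*}
 F_{C[u,k]}(s)=(F_C(s)-x)(1+x+\cdots+x^k)+x^{k+1}.
\end{equation*}
Subtracting $1$ and using $(1-x)(1+\cdots+x^k)=1-x^{k+1}$ gives
\begin{equation*}
 F_{C[u,k]}(s)-1=(F_C(s)-1)(1+x+\cdots+x^k)=(F_C(s)-1)A_j(s).
\end{equation*}
This is the same telescoping as in the proof of Lemma~\ref{lem:augmentation}. Iterating from $C_0=\A$ gives \eqref{eq:F-product}.

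For \eqref{eq:M-product} take $\phi(w)=\muP(w)z^{|w|}$ and $y=qz^{\ell}$. The identical computation gives
\begin{equation*}
 M_{C[u,k]}(z)-1=(M_C(z)-1)Q_j(z).
\end{equation*}
For the base case, $M_0(z)=\sum_a p_az=z$, so $M_0(z)-1=z-1$. Induction gives \eqref{eq:M-product}.

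For \eqref{eq:lbar-product}, differentiate \eqref{eq:M-product} at $z=1$. Since $z-1$ vanishes there, the product rule leaves only one term:
\begin{equation*}
 \lbar_n=M_n'(1)=\prod_j Q_j(1).
\end{equation*}
Finally, $Q_j(1)=1+q_j+\cdots+q_j^{K_j}=A_j(0)$.

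The only step needing care is checking that the summation over the new code really ranges over $C\setminus\{u\}$ without double counting, and that the excluded term $x$ (respectively $y$) is exactly $\phi(u)$. Both follow from the uniqueness of representations in Lemma~\ref{lem:augmentation}. Otherwise the argument is routine algebra, so I expect no real obstacle.
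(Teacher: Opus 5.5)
Your proposal is correct and follows the paper's proof. The one-step identity $F_{C[u,k]}(s)-1=(F_C(s)-1)(1+q^{1+s}+\cdots+q^{k(1+s)})$ and its analogue with $qz^{|u|}$ come from summing directly over the unique representations in Lemma~\ref{lem:augmentation}; iterating from $C_0=\A$ (with $M_0(z)=z$) and then differentiating \eqref{eq:M-product} at $z=1$ gives all three identities, with your write-up only making explicit the telescoping $(1-x)(1+\cdots+x^k)=1-x^{k+1}$ that the paper calls ``direct summation.''
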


\begin{proof}
For one augmentation at $u$, with $q=\muP(u)$ and copy factor $k$, direct summation gives
\begin{equation*}
 F_{C[u,k]}(s)-1=(F_C(s)-1)(1+q^{1+s}+\cdots+q^{k(1+s)}),
\end{equation*}
and the same calculation with $q^{1+s}$ replaced by $qz^{|u|}$ gives the length-generating identity.  Iteration proves \eqref{eq:F-product} and \eqref{eq:M-product}; differentiation of \eqref{eq:M-product} at $z=1$ gives \eqref{eq:lbar-product}.
\end{proof}

For $s>0$, define the logarithmic defect
\begin{equation}\label{eq:tail-def}
 \Theta_n(s):=-\log(1-F_n(s)).
\end{equation}

For $n\ge3$, define
\begin{equation}\label{eq:critical-sequences}
 u_n:=\frac1{\log n\,\log\log n},
 \qquad
 s_n:=\frac{u_n}{\log n}
 =\frac1{(\log n)^2\log\log n}.
\end{equation}

\begin{theorem}[Critical-scale tail estimate]\label{thm:critical}
Under $\Pideal$,
\begin{equation}\label{eq:critical}
 \Theta_n(s_n)=E_1(u_n)+o_{\Pideal}(1),
 \qquad
 E_1(u):=\int_u^\infty\frac{e^{-v}}v\,dv.
\end{equation}
\end{theorem}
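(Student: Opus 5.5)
The plan is to work entirely with the exact product identity \eqref{eq:F-product}. Taking logarithms gives, for every $s>0$,
\begin{equation*}
 \Theta_n(s)=-\log\bigl(1-F_0(s)\bigr)-\sum_{j=1}^n\log A_j(s).
\end{equation*}
So $\Theta_n(s_n)$ is a deterministic boundary term plus an additive functional of the chain. Since $1-F_0(s)=s\hp+O(s^2)$, the boundary term equals $-\log s_n-\log\hp+o(1)$.

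The additive part I would split into compensator plus martingale. By \eqref{eq:ideal-joint},
\begin{equation*}
 \Pideal\bigl[\log A_{j+1}(s)\bigm|\F_j\bigr]
 =\sum_{u\in C_j}\sum_{k\ge1}(1-q_u)q_u^k\log\bigl(1+q_u^{1+s}+\cdots+q_u^{k(1+s)}\bigr).
\end{equation*}
Expanding in $q_u$, this equals $\sum_{u\in C_j}q_u^{2+s}+O\bigl(\sum_u q_u^3\bigr)$. The main drift is therefore $R_j(s):=F_j(1+s)$, the Laplace transform of the $q$-weighted log-weight measure $\nu_j:=\sum_{u\in C_j}q_u\delta_{\lambda(u)}$, evaluated one unit further out.

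For the martingale part I would use the conditional second moments, which are of order $\sum_u q_u^3$. The error terms need control of how many codewords carry large weight. Here I would use Lemma~\ref{lem:weight-envelope}: $C_j$ has $O(j)$ words, so weights cannot concentrate on few codewords beyond $(\log j)^2$ factors. Lemma~\ref{lem:nonsimple} should allow replacing $A_j$ by its simple-step form $1+q_j^{1+s}$ at a cost $o_{\Pideal}(1)$ after the relevant normalization. These steps would reduce the claim to a deterministic-looking statement about $\sum_{j\le n}R_j(s_n)$.

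Next I would derive a closed evolution equation for the Laplace transforms. One augmentation at $u$ removes the atom $q_u\delta_{\lambda(u)}$ and adds a copy of $\nu_j$, scaled by $q_u$ and shifted by $\lambda(u)$. This is exactly \eqref{eq:F-product} read as
\begin{equation*}
 F_{j+1}(s)-1=(F_j(s)-1)\bigl(1+q^{1+s}\bigr).
\end{equation*}
Averaging over $u$ gives the mean-field recursion
\begin{equation*}
 1-F_{j+1}(s)\approx(1-F_j(s))\bigl(1+F_j(1+s)\bigr).
\end{equation*}
This couples the transform at $s$ with the transform at $1+s$.

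I would introduce the intrinsic clock $\tau_n:=\sum_{j\le n}q_j$. By \eqref{eq:lbar-product}, $\tau_n=\log\lbar_n+o(1)$, and this clock should behave like $\log n$ up to a bounded random shift. The hoped-for self-similar profile is that $\nu_n$, recentered at $\log n$, converges to a law whose Laplace transform, integrated against the clock, produces
\begin{equation*}
 -\sum_{j\le n}\log A_j(s_n)=\log u_n-\log\hp+\log s_n^{-1}\cdot 0+\bigl(E_1(u_n)+\log u_n\bigr)+o(1),
\end{equation*}
that is, it should turn $-\log s_n-\log\hp$ into $E_1(u_n)=-\gamma-\log u_n+o(1)$. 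Concretely, I would aim to show that the integrand $\int_{u_n}^\infty e^{-v}v^{-1}\,dv$ arises as the sum over $j$ of $q_j^{1+s_n}$. The change of variables is $v=s_n\lambda(P_j)$, i.e.\ $q_j^{s_n}=e^{-v}$. The density $dv/v$ comes from the fact that the selected prefixes at ideal "time" $j$ have $\lambda(P_j)\approx\log j$, so $\sum_j q_j\,\delta_{\lambda(P_j)}$ behaves like $d\lambda/\lambda$.

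The main obstacle is this last identification. It requires showing that the $q$-weighted sum of $q_j^{s_n}$ over selected prefixes concentrates on $\int e^{-s_n\lambda}\,d\lambda/\lambda$ with only $o(1)$ total error, uniformly down to the scale $u_n$. I would first try to prove a law of large numbers for the counting measure $\sum_j q_j\delta_{\lambda(P_j)}$ on logarithmic windows, using the clock $\tau_n$ and the second-moment bounds above. The choice $u_n=1/(\log n\log\log n)$ is what makes the neglected second-order terms $s_n^2\sum_u q_u\lambda(u)^2$ and the window discretization errors vanish. If that fails, the fallback is to prove the limit for fixed $u=s\log n$ and then extend to $u_n\to0$ by monotonicity of $\Theta_n$ in $s$ and continuity of $E_1+\log u$ at $0$.
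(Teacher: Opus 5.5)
\paragraph{Gap: the reduction discards terms of order one.}
Your starting identity $\Theta_n(s)=-\log(1-F_0(s))-\sum_{j\le n}\log A_j(s)$ is correct, and so is your drift $\mathbb E(\log A_{j+1}(s)\mid\F_j)=F_j(1+s)+O(S_{3,j})$ with $S_{3,j}=\sum_{u\in C_j}q_u^3$. The reduction you build on them, however, cannot reach the precision the theorem needs. Both sides of \eqref{eq:critical} diverge, so the whole content is the additive constant. The statement is equivalent to
\begin{equation*}
 \sum_{j\le n}\log A_j(s_n)=\log\log n+\gamma-\log\hp+o_{\Pideal}(1).
\end{equation*}
Your displayed target is miscomputed: it has $\log u_n$ where $-\log\log n$ belongs, and the wrong sign on $\log\hp$.

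Every term you propose to discard is of order one, not $o(1)$.
\begin{itemize}
\item The martingale $\sum_{j<n}\zeta_{j+1}(s_n)$, with $\zeta_{j+1}=\log A_{j+1}-\mathbb E(\log A_{j+1}\mid\F_j)$, has orthogonal increments. Its variance is therefore at least that of its first increment, which tends to the strictly positive variance of $\log A_1(0)$.
\item The bias terms are only bounded by $K\sum_jS_{3,j}$. This has finite expectation but is not small; its $j=0$ term is the fixed constant $\sum_ap_a^3$.
\item Replacing $A_j$ by $1+q_j^{1+s}$ costs an order-one random amount already at $j=1$, on $\{K_1\ge2\}$, which has probability $\sum_ap_a^2$. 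Lemma~\ref{lem:nonsimple} is an $o(m)$ statement and does not help at this scale.
\item Likewise your clock only satisfies $\tau_n=\log\lbar_n+O(1)$, since $\log A_j(0)-q_j$ is of order $q_j^2$.
\end{itemize}
These early fluctuations are cancelled only through the later drift terms $F_{j'}(1+s_n)$, $j'>j$, which depend on them. So no ``deterministic-looking'' statement about $\sum_jF_j(1+s_n)$ can be isolated to precision $o(1)$. For the same reason, a law of large numbers for $\sum_jq_j\delta_{\lambda(P_j)}$ on logarithmic windows could at best recover the $\log\log n$ growth. It cannot produce the constant $\gamma-\log\hp$, which depends on the exact initial law and on the whole early trajectory.

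\paragraph{The proposed source of $E_1$ is misplaced.}
Since $q_j(1-q_j^{s_n})\le s_n\lambda(P_j)q_j$ and Lemma~\ref{lem:weight-envelope} gives $\sum_{j\le n}\lambda(P_j)q_j\le K(\log n)^2$, you get $\sum_{j\le n}q_j^{1+s_n}=\sum_{j\le n}q_j+O(1/\log\log n)$ pathwise. So the factor $q_j^{s_n}$ plays no role, and the substitution $v=s_n\lambda(P_j)$ cannot produce the range $v\ge u_n$. That range corresponds to $\lambda\gtrsim\log n$, i.e.\ to current codewords, not selected ones.

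\paragraph{The fallback does not close the gap.}
Monotonicity in $s$ yields at most one of the two inequalities against the divergent $E_1(u_n)$. The fixed-$u$ statement also has exactly the same order-one error problem.

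\paragraph{Missing idea: feedback with damping.}
What is needed is a structure in which early errors are damped to zero, not merely summable. Your mean-field recursion is its first-order form. With $\mathcal T$ the unit shift in $s$ and $\varepsilon_{j+1}:=\log A_{j+1}-\Theta_j(\cdot+1)$, one has exactly $\Theta_{j+1}=(I-\mathcal T)\Theta_j-\varepsilon_{j+1}$.

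The paper solves the linear part in closed form, $\Lambda_n=(I-\mathcal T)^n\Lambda_0$ with $\Lambda_0=-\log(1-F_0)$. This has the renewal representation $\Lambda_n(s)=\sum_{r\ge1}r^{-1}\Eaux[e^{-s\Sigma_r}(1-e^{-\Sigma_r})^n]$. The measure $\sum_rr^{-1}\Paux(\Sigma_r\in dx)$ behaves like $dx/x$ and is cut off near $x=\log n$ by the factor $(1-e^{-x})^n$. A Bernstein bound then yields $E_1(s_n\log n)=E_1(u_n)$.

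For the remainder $\Theta_n-\Lambda_n=-\sum_{j<n}(I-\mathcal T)^{n-1-j}\varepsilon_{j+1}$, the decisive fact concerns exponential series $\sum_\nu c_\nu e^{-\nu s}$. On them, $(I-\mathcal T)$ multiplies $c_\nu$ by $1-e^{-\nu}\in[0,1)$. Hence it contracts $\|\cdot\|_s$ and sends each fixed-$j$ error to zero as $n\to\infty$. The remaining tail is controlled by three facts:
\begin{itemize}
\item the bias is bounded by $KS_{3,j}$;
\item the martingale part is handled by orthogonality;
\item $\mathbb E\sum_{j\ge J}S_{3,j}\to0$, which follows from the drift $\mathbb E(S_{2,j+1}\mid C_j)\le S_{2,j}-cS_{3,j}$.
\end{itemize}
Together these make the remainder $o_{\Pideal}(1)$. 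Without this damping, or some equivalent exact bookkeeping of how each fluctuation is compensated by later drift, your plan has no route to the constant $-\gamma$.
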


The proof is given in Appendix~\ref{app:critical}.  Its only special-function input is the classical small-argument relation
\begin{equation}\label{eq:E1-small}
 E_1(u)=-\gamma-\log u+o(1),\qquad u\downarrow0,
\end{equation}
see \cite[Eq.~5.1.11]{AbramowitzStegun1972}; standard concentration estimates are cited where used.

\begin{theorem}[Mean codeword length]\label{thm:mean-length}
Under $\Pideal$,
\begin{equation}\label{eq:mean-length}
 \lbar_n=\frac{e^\gamma}{\hp}\log n\bigl(1+o_{\Pideal}(1)\bigr).
\end{equation}
\end{theorem}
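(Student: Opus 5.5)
The plan is to read off $\lbar_n$ from the small-$s$ behaviour of $1-F_n(s)$, matched against Theorem~\ref{thm:critical}. Since $F_n(0)=1$, a Taylor expansion at $s=0$ gives
\begin{equation*}
 1-F_n(s)\approx s\,\Lambda_n,\qquad \Lambda_n:=\sum_{w\in C_n}\lambda(w)\muP(w).
\end{equation*}
The first step is the Wald identity $\Lambda_n=\hp\,\lbar_n$. It holds because, for a complete prefix code, the codeword produced by parsing an i.i.d.\ $\mathbf p$-stream is a stopping time. Its expected $-\log\muP$ is therefore $\hp$ times its expected length $M_n'(1)=\lbar_n$.

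Granting the Taylor step, Theorem~\ref{thm:critical} and \eqref{eq:E1-small} combine as follows:
\begin{equation*}
 -\log s_n-\log\bigl(\hp\lbar_n\bigr)+o_{\Pideal}(1)=\Theta_n(s_n)=-\gamma-\log u_n+o_{\Pideal}(1).
\end{equation*}
Since $u_n/s_n=\log n$, this yields $\hp\lbar_n=e^\gamma\log n\,(1+o_{\Pideal}(1))$, which is \eqref{eq:mean-length}.

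The second step makes the Taylor step rigorous as two one-sided bounds. For the lower bound on $\lbar_n$, use $1-x^s\le s\log(1/x)$ for $x\in(0,1]$. This gives $1-F_n(s)\le s\Lambda_n$, hence $\Theta_n(s_n)\ge-\log(s_n\Lambda_n)$. Combined with \eqref{eq:critical}, it gives $\hp\lbar_n\ge e^\gamma\log n\,(1-o_{\Pideal}(1))$ with no further input; equivalently, one can use concavity of $s\mapsto1-F_n(s)$, which makes $(1-F_n(s))/s$ nonincreasing.

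For the upper bound, use $1-e^{-x}\ge x-x^2/2$. This gives
\begin{equation*}
 1-F_n(s_n)\ge s_n\Lambda_n-\tfrac12 s_n^2\sum_{w\in C_n}\lambda(w)^2\muP(w).
\end{equation*}
So it suffices to show
\begin{equation*}
 s_n\sum_{w\in C_n}\lambda(w)^2\muP(w)=o_{\Pideal}(\Lambda_n).
\end{equation*}
Equivalently, a $\muP$-random codeword of $C_n$ should have $\lambda$ of order $\log n$, far below $1/s_n=(\log n)^2\log\log n$. A truncated version will also do: fix $L_n$ with $\log n\ll L_n\ll 1/s_n$. The bound on $\{\lambda\le L_n\}$ then loses only a factor $1-s_nL_n/2=1-o(1)$, and the contribution of $\{\lambda>L_n\}$ to $\Lambda_n$ must be shown negligible.

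I expect this tail control to be the main obstacle. First I would try writing the $\muP$-weighted length law through \eqref{eq:M-product}: the codeword length is a sum over augmentation steps of the independent contributions encoded by the factors $Q_j(z)$, with $(z-1)$ accounting for the final symbol. Its second moment is then
\begin{equation*}
 M_n''(1)+\lbar_n=\Bigl(\prod_jQ_j(1)\Bigr)\cdot\bigl(\text{sum of logarithmic derivative terms}\bigr),
\end{equation*}
where each term is controlled by $q_j\ell_j$ and $K_j$. Lemma~\ref{lem:weight-envelope} applied to the selected prefixes bounds $\sum_j q_j\ell_j$ by $O((\log n)^2)$, and Lemma~\ref{lem:nonsimple} controls the non-simple factors. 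Together these should give a second moment of $\lambda\le\lambda_{\max}$-scaled length of order $(\log n)^3$ times $\lbar_n/\log n$, i.e.\ small against $\Lambda_n/s_n$ thanks to the extra $\log\log n$ in $s_n$.

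If that moment bound proves too lossy, the fallback is to run the argument of Theorem~\ref{thm:critical} at a second scale $s_n'=\varepsilon s_n$. The appendix proof should be robust to constant rescaling of $u_n$. One then uses concavity of $1-F_n$ to interpolate between the two scales: the chord slope $\bigl((1-F_n(s_n'))-(1-F_n(s_n))\bigr)/(s_n'-s_n)$ and the tangent at $0$ then pin $\Lambda_n$ to within a factor $1+O(\varepsilon)$, and letting $\varepsilon\downarrow0$ concludes.
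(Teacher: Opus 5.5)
Your argument is correct, but it takes a different route from the paper. The paper never Taylor-expands $1-F_n$ at the codeword level. It divides by the single-letter factor using \eqref{eq:F-product}, which gives $\prod_jA_j(s_n)=(1-F_n(s_n))/(1-F_0(s_n))=\frac{e^\gamma}{\hp}\log n\,(1+o_{\Pideal}(1))$ using only the trivial expansion $1-F_0(s_n)=\hp s_n(1+o(1))$. It then compares this with $\lbar_n=\prod_jA_j(0)$ factor by factor, via $0\le\log(A_j(0)/A_j(s))\le s\lambda_jq_j/(1-\rhoP)^2$ and the envelope $\sum_j\lambda_jq_j\le K(\log n)^2$ over the distinct selected prefixes. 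That yields both inequalities at once with error $O(1/\log\log n)$. The entropy enters through $F_0$ rather than through Wald's identity; your Wald identity is just the $s$-derivative of \eqref{eq:F-product} at $s=0$.

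Your route gets the lower bound on $\lbar_n$ for free from $1-e^{-x}\le x$, and pays for the upper bound with a second-moment estimate. That estimate closes as you sketch, and deterministically. One has $\sum_{w\in C_n}\lambda(w)^2\muP(w)\le(\max_a\log(1/p_a))^2J_n$, where $J_n=\lbar_n\bigl(1+2\sum_iQ_i'(1)/Q_i(1)\bigr)\le\lbar_n\bigl(1+K(\log(n+2))^2\bigr)$. This is exactly \eqref{eq:J-bound-new}, which the paper proves in Appendix~\ref{app:sampling}. Hence $s_n\sum_w\lambda(w)^2\muP(w)=O(\lbar_n/\log\log n)$, which is $o$ of your $\Lambda_n=\hp\lbar_n$.

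Both proofs rest on the same two inputs: distinctness of selected prefixes feeding Lemma~\ref{lem:weight-envelope}, and the spare $\log\log n$ in $s_n$. The paper's version is shorter because the product structure lets a first-moment quantity of the selected prefixes do the work. Yours is more intrinsic to the code and reuses a bound the paper needs elsewhere anyway.

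Two small remarks. First, Lemma~\ref{lem:nonsimple} is not needed, because $Q_i'(1)/Q_i(1)\le\ell_i\sum_{a\ge1}aq_i^a\le K\ell_iq_i$ uniformly in the copy factor. Second, your fallback would not work as stated. For the concave function $g(s)=1-F_n(s)$ with $g(0)=0$, chord slopes at positive arguments bound $g'(0)=\sum_w\lambda(w)\muP(w)$ only from below. Codewords with $\lambda(w)\gg1/(\varepsilon s_n)$ saturate in $g$ but can carry arbitrary weight in $g'(0)$. Some control of the large-$\lambda$ mass is unavoidable, either your moment bound or the paper's factorwise comparison.
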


\begin{proof}
Theorem~\ref{thm:critical} and \eqref{eq:E1-small} give
\begin{equation*}
 1-F_n(s_n)=e^{-E_1(u_n)}\bigl(1+o_{\Pideal}(1)\bigr)
 =e^\gamma u_n\bigl(1+o_{\Pideal}(1)\bigr).
\end{equation*}
Since $1-F_0(s_n)=\hp s_n(1+o(1))$, \eqref{eq:F-product} yields
\begin{equation}\label{eq:critical-product}
 \prod_{j=1}^nA_j(s_n)
 =\frac{e^\gamma}{\hp}\log n\bigl(1+o_{\Pideal}(1)\bigr).
\end{equation}
Moreover, with $\lambda_j=-\log q_j$,
\begin{align*}
 0\le \log\frac{A_j(0)}{A_j(s)}
 &\le A_j(0)-A_j(s)\\
 &=\sum_{a=1}^{K_j}q_j^a(1-e^{-as\lambda_j})\\
 &\le s\lambda_j\sum_{a\ge1}a q_j^a
 \le \frac{s\lambda_jq_j}{(1-\rhoP)^2}.
\end{align*}
The selected prefixes are distinct, so Lemma~\ref{lem:weight-envelope} gives
\begin{equation*}
 \sum_{j=1}^n\lambda_jq_j\le K_{\mathbf p}(\log n)^2.
\end{equation*}
At $s=s_n$ the logarithm of the ratio between $\prod A_j(0)$ and $\prod A_j(s_n)$ is $O(1/\log\log n)=o(1)$.  Combine \eqref{eq:lbar-product} and \eqref{eq:critical-product}.
\end{proof}

\begin{lemma}[Summed mean length]\label{lem:summed-mean}
Under $\Pideal$,
\begin{equation}\label{eq:sum-lbar}
 \sum_{j=1}^m\lbar_{j-1}
 =\frac{e^\gamma}{\hp}m\log m\bigl(1+o_{\Pideal}(1)\bigr).
\end{equation}
\end{lemma}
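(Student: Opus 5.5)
We sum the pathwise estimate of Theorem~\ref{thm:mean-length} by a deterministic Cesàro argument, splitting off a short initial segment of indices. The one subtlety is that Theorem~\ref{thm:mean-length} holds only in probability at each fixed $n$, which does not directly control a sum over all $j\le m$. We therefore first upgrade it to a statement uniform over a long final block of indices, using monotonicity of $\lbar_n$ and a geometric grid.

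\emph{Monotonicity and initial segment.} By \eqref{eq:lbar-product}, $\lbar_n=\prod_{j\le n}A_j(0)$ with every $A_j(0)\ge1$, so $n\mapsto\lbar_n$ is nondecreasing. Fix $\varepsilon>0$ and split the sum at $m_0:=\lceil m/\log m\rceil$. For $j\le m_0$, monotonicity gives $\lbar_{j-1}\le\lbar_m$. Hence the initial part is at most $m_0\lbar_m$, which by Theorem~\ref{thm:mean-length} is $O_{\Pideal}(m_0\log m)=O_{\Pideal}(m)=o_{\Pideal}(m\log m)$.

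\emph{Uniform control on the remaining range via a grid.} For $m_0<j\le m$ we have $\log j=\log m\,(1+o(1))$ uniformly. Choose grid points $n_i:=\lceil m_0(1+\varepsilon)^i\rceil$ for $i=0,\dots,I$, with $I=O(\varepsilon^{-1}\log\log m)$, together with the endpoint $m$.

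Applying Theorem~\ref{thm:mean-length} at each $n_i$ uses a growing number of points, so a plain union bound is not automatic. Instead, take a subsequence $\delta_m\downarrow0$ slowly enough that
\begin{equation*}
 \Pideal\Bigl(\bigl|\lbar_n\hp e^{-\gamma}/\log n-1\bigr|>\varepsilon\Bigr)\le\delta_{n}
 \quad\text{with}\quad \delta_{m_0}\,I\to0 .
\end{equation*}
This is not guaranteed by convergence in probability alone. The simpler route is to avoid the issue: use only the two endpoints $m_0$ and $m$. For $m_0\le n\le m$, monotonicity gives
\begin{equation*}
 \lbar_{m_0}\le\lbar_n\le\lbar_m .
\end{equation*}
Both endpoints equal $\frac{e^\gamma}{\hp}\log m\,(1+o_{\Pideal}(1))$, because $\log m_0=\log m-\log\log m=\log m\,(1+o(1))$. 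This needs Theorem~\ref{thm:mean-length} only at the two indices $m_0$ and $m$, and it already yields uniformity over the whole range, so the grid is unnecessary.

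\emph{Summation.} On the event where both endpoint estimates hold within a factor $1\pm\varepsilon$, the sum over $m_0<j\le m$ lies between $(m-m_0)(1-\varepsilon)\frac{e^\gamma}{\hp}\log m\,(1+o(1))$ and $m(1+\varepsilon)\frac{e^\gamma}{\hp}\log m$. Adding the negligible initial part and letting $\varepsilon\downarrow0$ gives \eqref{eq:sum-lbar}.

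The only point that requires care, rather than being a genuine obstacle, is the use of the in-probability statement of Theorem~\ref{thm:mean-length} along the shifted index $m_0(m)-1$. That is legitimate because the theorem asserts convergence in probability along any sequence of indices tending to infinity.
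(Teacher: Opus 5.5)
Your argument is correct and is essentially the paper's proof. Both use monotonicity of $\lbar_n$ from \eqref{eq:lbar-product} to sandwich the sum between endpoint values, then apply Theorem~\ref{thm:mean-length} at two indices tending to infinity. The only difference is cosmetic: you split at $m_0=\lceil m/\log m\rceil$, which makes the initial segment $O_{\Pideal}(m)$ outright, whereas the paper splits at $\lceil\delta m\rceil$ and lets $\delta\downarrow0$ at the end; the grid detour in your write-up can simply be deleted.
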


\begin{proof}
By \eqref{eq:lbar-product}, $\lbar_n$ is nondecreasing pathwise.  For fixed $0<\delta<1$,
\begin{equation*}
 (1-\delta)m\,\lbar_{\lceil\delta m\rceil-1}
 \le\sum_{j=1}^m\lbar_{j-1}
 \le m\lbar_m.
\end{equation*}
Apply Theorem~\ref{thm:mean-length} and then let $\delta\downarrow0$.
\end{proof}

\begin{lemma}[Sampling fluctuation]\label{lem:sampling}
Under $\Pideal$,
\begin{equation}\label{eq:sampling}
 \sum_{j=1}^m\bigl(|P_j|-\lbar_{j-1}\bigr)
 =o_{\Pideal}(m\log m).
\end{equation}
\end{lemma}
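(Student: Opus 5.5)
The natural route is a martingale decomposition.  Under $\Pideal$, \eqref{eq:ideal-prefix} gives $\Pideal(P_j=u\mid\F_{j-1})=q_u$ for $u\in C_{j-1}$, hence
\begin{equation*}
 \mathbb E\bigl[|P_j|\bigm|\F_{j-1}\bigr]=\sum_{u\in C_{j-1}}q_u|u|=M_{j-1}'(1)=\lbar_{j-1}.
\end{equation*}
So $X_j:=|P_j|-\lbar_{j-1}$ is a martingale difference sequence.  The claim \eqref{eq:sampling} will follow from a weak law for its partial sums, provided the conditional variances are controlled on the scale $m(\log m)^2$ with room to spare.

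The conditional variance is at most
\begin{equation*}
 \sum_{u\in C_{j-1}}q_u|u|^2=M_{j-1}''(1)+\lbar_{j-1}.
\end{equation*}
I would bound the second moment of codeword length under the codeword weights.  Since every $u\in C_{j-1}$ has $|u|\le\lambda(u)/\alphaP$, it suffices to bound $\sum_u q_u\lambda(u)^2$.  The key structural point is that $C_{j-1}$ has at most $1+(d-1)+\sum_{i<j}(\ldots)$ codewords, but more usefully that its codeword weights sum to one.  I would argue via the product identity \eqref{eq:M-product}, in its second-derivative form, that $M_n''(1)$ is at most a constant times $\lbar_n^2$ plus lower-order terms.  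Concretely, differentiating $\log(M_n(z)-1)-\log(z-1)=\sum_j\log Q_j(z)$ twice at $z=1$ gives the variance of length as $\sum_j\operatorname{Var}$-type terms $\sum_j O(q_j\ell_j^2)$, using $K_jq_j$-geometric bounds with $\rhoP<1$.  These terms are bounded pathwise by Lemma~\ref{lem:weight-envelope}, since the prefixes are distinct, extended to $\sum q\,\ell^2$.  The same dyadic argument as in \eqref{eq:weight-two} gives $O((\log n)^3)$.  In any case one obtains $M_{n}''(1)\le K(\log n)^3$ pathwise, or at least $O_{\Pideal}((\log n)^{2+\epsilon'})$ with $\epsilon'<1$ after combining with Theorem~\ref{thm:mean-length}.

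The main obstacle is exactly this variance bound: a crude pathwise bound gives $\sum_{j\le m}\mathrm{Var}_j=O(m(\log m)^3)$.  That is still $o(m^2(\log m)^2)$, which is all Chebyshev (or Doob's $L^2$ inequality) needs, since we normalize by $m\log m$.  So even the crude $(\log m)^3$, or any polylog, suffices.  I would first try to prove the pathwise bound $\sum_u q_u|u|^2\le K(\log n)^3$ for codes arising from $n$ augmentations.  This follows because $|u|\le |U_n|$ is not helpful, but the lengths have geometric weight tails: the mass of codewords with $\lambda(u)>t$ decays once $t\gg\log n$.  Lemma~\ref{lem:weight-envelope} applied to the $\le n$-element sets of selected prefixes handles the factor products in \eqref{eq:M-product}.

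If a pathwise bound proves awkward, I would truncate instead.  Define $X_j'=X_j\one\{\lbar_{j-1}\le B\log m\}$, which is still a martingale difference after centering on that $\F_{j-1}$-event.  Theorem~\ref{thm:mean-length} makes the truncation event hold for all $j\le m$ with probability tending to one.  Chebyshev then yields
\begin{equation*}
 \Pideal\Bigl(\Bigl|\sum_{j\le m}X_j'\Bigr|>\epsilon m\log m\Bigr)\le\frac{C\,m\,\mathrm{polylog}(m)}{\epsilon^2m^2(\log m)^2}\to0,
\end{equation*}
which proves \eqref{eq:sampling}.
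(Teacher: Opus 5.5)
\textbf{Verdict: gap in the key variance step, repairable with the paper's identity.} Your architecture matches the paper's:
\begin{itemize}
\item $X_j=|P_j|-\lbar_{j-1}$ are martingale differences;
\item each conditional variance is at most $J_{j-1}=\sum_{u\in C_{j-1}}q_u|u|^2$;
\item $J$ is bounded through \eqref{eq:M-product} and the weight envelope over distinct prefixes;
\item $\lbar$ is controlled in probability by Theorem~\ref{thm:mean-length};
\item a second-moment inequality finishes.
\end{itemize}
Your predictable truncation on $\{\lbar_{j-1}\le B\log m\}$ followed by Chebyshev is a valid substitute for the paper's Lemma~\ref{lem:lenglart}, whose proof uses the same kind of predictable stopping. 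You should say explicitly that $\lbar_n$ is pathwise nondecreasing by \eqref{eq:lbar-product}. Then ``for all $j\le m$'' reduces to the single event $\{\lbar_{m-1}\le B\log m\}$ with $B>e^\gamma/\hp$.

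The step that fails as written is the variance computation, which is the heart of the lemma. Since $\mathcal Q_n(z)=(1-M_n(z))/(1-z)=\sum_{L\ge0}\Pr(|w|>L)z^L$, the function $\mathcal Q_n/\lbar_n$ generates the residual-life (equilibrium) law $R$ of the codeword length. Twice differentiating $\sum_j\log Q_j$ at $z=1$ therefore gives $\mathrm{Var}(R)=\sum_j O(q_j\ell_j^2)$, not the variance of $|w|$ under $\muP$. What the conditional variance needs is $M_n''(1)=2\mathcal Q_n'(1)$, i.e.\ only one logarithmic derivative:
\[
 J_n=\lbar_n\Bigl(1+2\sum_{i\le n}\frac{Q_i'(1)}{Q_i(1)}\Bigr)\le\lbar_n\bigl(1+K(\log(n+2))^2\bigr).
\]
This uses $Q_i'(1)/Q_i(1)\le K|P_i|q_i$ and Lemma~\ref{lem:weight-envelope}. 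The bound carries a factor $\lbar_n$ that your argument controls only in probability. So what you wrote establishes neither the pathwise $K(\log n)^3$ bound nor $M_n''(1)\le K\lbar_n^2+\cdots$, and the truncation branch is not optional.

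With the corrected identity the truncation branch closes the proof. On the truncation event each conditional variance is at most $KB(\log m)^3$. Chebyshev then gives the bound $KB\log m/(\varepsilon^2m)\to0$.

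Your pathwise instinct can in fact be rescued, but only with an inequality you did not supply. For a positive integer length $L$ with residual law $R$, Cauchy--Schwarz gives $\mathrm{Var}(R)\ge(\mathbb E L(L-1))^2/(12(\mathbb E L)^2)$. Combined with $\mathbb E L(L-1)\ge(\mathbb E L)^2-\mathbb E L$, this yields $\mathbb E L^2\le(1+\sqrt{12\,\mathrm{Var}(R)})^2$. The weight-envelope argument run with $|w|^2$ gives $\mathrm{Var}(R)\le K\sum_jq_j|P_j|^2=O((\log n)^3)$. Together these give a pathwise polylog bound on $J_n$.
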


Its proof is in Appendix~\ref{app:sampling}.

\begin{theorem}[Ideal growth law]\label{thm:ideal-growth}
Under $\Pideal$,
\begin{align}
 G_m&=\frac{e^\gamma}{\hp}m\log m\bigl(1+o_{\Pideal}(1)\bigr),\label{eq:G-growth}\\
 T_m&=m+o_{\Pideal}(m).\label{eq:T-growth}
\end{align}
\end{theorem}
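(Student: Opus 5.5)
Both claims come from decomposing $G_m$ and $T_m$ and putting together the lemmas proved just before the statement.

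For \eqref{eq:G-growth}, split
\begin{equation*}
 G_m-1=\sum_{j=1}^m K_j|P_j|
 =\sum_{j=1}^m\lbar_{j-1}
 +\sum_{j=1}^m\bigl(|P_j|-\lbar_{j-1}\bigr)
 +\sum_{j=1}^m(K_j-1)|P_j|.
\end{equation*}
The three sums are handled separately.
\begin{itemize}
\item By Lemma~\ref{lem:summed-mean}, the first sum equals $\frac{e^\gamma}{\hp}m\log m\,(1+o_{\Pideal}(1))$.
\item By Lemma~\ref{lem:sampling}, the second sum is $o_{\Pideal}(m\log m)$.
\item By \eqref{eq:length-excess} in Lemma~\ref{lem:nonsimple}, the third sum is $o_{\Pideal}(m\log m)$.
\end{itemize}
The leading constant $e^\gamma/\hp$ is strictly positive. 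So the two error terms, and the additive $1$, are absorbed into the factor $1+o_{\Pideal}(1)$. A sum of finitely many $o_{\Pideal}$ terms is again $o_{\Pideal}$, so no uniformity beyond what is stated is needed. This gives \eqref{eq:G-growth}.

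For \eqref{eq:T-growth}, the definition \eqref{eq:ideal-functionals} gives $T_m=m+D_m$ identically. The bound \eqref{eq:complexity-excess} in Lemma~\ref{lem:nonsimple} then gives $T_m=m+o_{\Pideal}(m)$ directly. This step is immediate.

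All the real difficulty sits in the inputs, which we assume. These are the critical-scale estimate behind Theorem~\ref{thm:mean-length} and Lemma~\ref{lem:summed-mean}, and the fluctuation bounds of Lemmas~\ref{lem:sampling} and~\ref{lem:nonsimple}. Given them, the only point to check is notational. Lemma~\ref{lem:summed-mean} is indexed by $\lbar_{j-1}$, the mean length of the code present when $P_j$ is drawn. This matches the conditional mean $\Eideal$-type centering used in Lemma~\ref{lem:sampling}, since under \eqref{eq:ideal-prefix} we have $\mathbb E[|P_j|\mid\F_{j-1}]=M_{j-1}'(1)=\lbar_{j-1}$.
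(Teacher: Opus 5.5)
Your proof is correct and follows essentially the same route as the paper's. The paper likewise combines Lemmas~\ref{lem:summed-mean} and~\ref{lem:sampling} to get the first-order law for $\sum_{j\le m}|P_j|$, uses \eqref{eq:length-excess} to replace $|P_j|$ by $K_j|P_j|$, and reads off \eqref{eq:T-growth} from $T_m=m+D_m$ together with \eqref{eq:complexity-excess}.
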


\begin{proof}
Lemmas~\ref{lem:summed-mean} and~\ref{lem:sampling} give the first-order law for $\sum_{j\le m}|P_j|$; \eqref{eq:length-excess} permits replacement of $|P_j|$ by $K_j|P_j|$.  This proves \eqref{eq:G-growth}, while \eqref{eq:T-growth} is \eqref{eq:complexity-excess}.
\end{proof}

\section{Exact finite-block representation and boundary transfer}\label{sec:boundary}

For a finite complete prefix code $C$, $f\in C\cup\{\varnothing\}$, and $L\ge0$, let $\Wcal(C,f,L)$ be the set of length-$L$ words that are concatenations of $C$-codewords and, when nonempty, do not end in $f$; for $f=\varnothing$ there is no restriction.  Define the boundary mass
\begin{equation}\label{eq:B-def}
 B(C,f,L):=\sum_{y\in\Wcal(C,f,L)}\muP(y),
 \qquad B(C,f,0)=1.
\end{equation}
Write $q_f=\muP(f)$ for $f\in C$ and $q_\varnothing=0$.

\begin{lemma}[Boundary recursion]\label{lem:boundary-recursion}
For $L>0$,
\begin{equation}\label{eq:B-recursion-new}
 B(C,f,L)=\sum_{u\in C\setminus\{f\}}
 \sum_{\substack{k\ge1\\k|u|\le L}}
 q_u^k B(C[u,k],u^{k+1},L-k|u|),
\end{equation}
where the exclusion is void for $f=\varnothing$.
\end{lemma}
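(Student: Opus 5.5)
The plan is to prove \eqref{eq:B-recursion-new} through an explicit weight-preserving bijection. The decomposition I will use is the one behind the recovery step of Definition~\ref{def:recovery}, namely splitting off the terminal run of the last factor.

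\emph{Step 1: split off the terminal run.} Fix $L>0$ and $y\in\Wcal(C,f,L)$. Because $C$ is a prefix code, $y$ has a unique $C$-factorization, and it is nonempty. Let $u\in C$ be its last factor; since $y$ does not end in $f$, we have $u\ne f$. Let $k\ge1$ be maximal such that the factorization ends in $k$ copies of $u$. Then $y=y'u^k$ with $k|u|\le L$, and $y'$ is a (possibly empty) $C$-concatenation of length $L-k|u|$ whose factorization does not end in $u$. The weight factorizes as $\muP(y)=\muP(y')\,q_u^k$.

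\emph{Step 2: recode the remainder over $C[u,k]$.} I will show that the $C$-concatenations not ending in $u$ are exactly the $C[u,k]$-concatenations not ending in $u^{k+1}$, with the empty word included on both sides.

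For the forward direction, take the $C$-factor sequence of $y'$ and group it greedily from the left. A maximal run of $u$'s followed by some $s\ne u$ is cut into blocks $u^{k+1}$ followed by a final block $u^js$ with $0\le j\le k$. This is possible because every run of $u$'s in $y'$ is terminated by some $s\ne u$, as $y'$ does not end in $u$. Consequently the last group has the form $u^js$ and is not $u^{k+1}$.

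For the converse, a $C[u,k]$-concatenation whose last codeword is some $u^js$ with $s\ne u$ is a $C$-concatenation whose last $C$-factor is $s\ne u$.

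Uniqueness of the representations is given by Lemma~\ref{lem:augmentation}, which makes this map a bijection between the two sets. Hence $y'\in\Wcal(C[u,k],u^{k+1},L-k|u|)$. When $y'$ is empty this is consistent with the convention $B(\cdot,\cdot,0)=1$.

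\emph{Step 3: the map is a bijection and summing gives the recursion.} The map $y\mapsto(u,k,y')$ is injective, since $y=y'u^k$ is recovered from the triple. To see that it is onto all triples with $u\in C\setminus\{f\}$, $k\ge1$, $k|u|\le L$, and $y'\in\Wcal(C[u,k],u^{k+1},L-k|u|)$, I check two things for $y=y'u^k$:
\begin{itemize}
\item $y$ is a $C$-concatenation ending in $u\ne f$, so $y\in\Wcal(C,f,L)$.
\item $k$ is the maximal terminal run, because by Step 2 the $C$-factorization of $y'$ does not end in $u$.
\end{itemize}
Summing $\muP(y)=q_u^k\muP(y')$ over all triples then yields \eqref{eq:B-recursion-new}. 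When $f=\varnothing$ the constraint $u\ne f$ is vacuous.

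The only delicate point is Step 2, specifically that the greedy regrouping never leaves a dangling partial run of $u$'s. This is exactly where the condition that $y'$ does not end in $u$ enters, and it is the same maximality argument already used in the proof of Proposition~\ref{prop:recovery}.
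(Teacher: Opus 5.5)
Your proposal is correct and follows the paper's proof. Both arguments split off the unique maximal terminal run $u^k$ (with $u\ne f$), regroup the remaining $C$-factors over $C[u,k]$ to land in $\Wcal(C[u,k],u^{k+1},L-k|u|)$, invert by expanding and appending $u^k$, and pick up the weight factor $q_u^k$; your Step~2 just spells out the regrouping, including why no dangling run of $u$'s can occur, which the paper states in one line.
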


\begin{proof}
Every nonempty admissible context has a unique maximal terminal run $u^k$, with $u\ne f$.  Deleting that run and grouping the remaining $C$-factors according to $C[u,k]$ gives a word in $\Wcal(C[u,k],u^{k+1},L-k|u|)$.  Conversely, expanding such a word and appending $u^k$ recreates a context whose maximal terminal run is exactly $u^k$.  The bijection multiplies product weight by $q_u^k$.  Summation over the disjoint terminal-run classes gives \eqref{eq:B-recursion-new}.
\end{proof}

\begin{definition}[Exact boundary chain]\label{def:boundary-chain}
For fixed $N$, start from
\begin{equation*}
 (C_0,f_0,R_0)=(\A,\varnothing,N-1).
\end{equation*}
At a reachable state, stop if $R_j=0$.  If $R_j>0$, choose $u\in C_j\setminus\{f_j\}$ and $k\ge1$ with $k|u|\le R_j$ according to
\begin{equation}\label{eq:boundary-kernel}
 K_N((C_j,f_j,R_j);u,k)
 :=\frac{q_u^k B(C_j[u,k],u^{k+1},R_j-k|u|)}
 {B(C_j,f_j,R_j)},
\end{equation}
and set
\begin{equation}\label{eq:boundary-update}
 C_{j+1}=C_j[u,k],
 \qquad
 f_{j+1}=u^{k+1},
 \qquad
 R_{j+1}=R_j-k|u|.
\end{equation}
Lemma~\ref{lem:boundary-recursion} makes \eqref{eq:boundary-kernel} a probability kernel on reachable states.  Its law is denoted by $\Pbound$, and $\mathcal N_N:=\inf\{j\ge0:R_j=0\}$ is its number of transitions.  Under $\Pbound$, for each fixed $j$ the path functionals $G_j,T_j,D_j,R_j$ are defined on the feasible cylinder $\{j\le\mathcal N_N\}$, and an $\F_m$-event is understood as a cylinder of feasible $m$-step paths.  Accordingly, $\{R_m\ge a\}$ abbreviates $\{m\le\mathcal N_N,\ R_m\ge a\}$, and, for a bounded stopping time $\tau$, $\{R_\tau\ge a\}$ abbreviates $\{\tau\le\mathcal N_N,\ R_\tau\ge a\}$.
\end{definition}

\begin{theorem}[Exact pair law]\label{thm:exact-pair-law}
The canonical pair list of $W_N$ under $\Psrc$ has the same distribution as the pair sequence of the boundary chain under $\Pbound$, stopped when $R_j=0$.  Along either representation,
\begin{equation}\label{eq:residual-identity}
 R_j=N-1-\sum_{i=1}^jK_i|P_i|=N-G_j.
\end{equation}
\end{theorem}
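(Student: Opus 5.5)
We prove the theorem by computing the probability of every feasible pair sequence on both sides and comparing them. Fix $N$ and write $w=xa$ with $|x|=N-1$. By \eqref{eq:terminal-invariance}, $\pi_T(xa)$ depends only on $x$. Summing over the $d$ choices of $a$ therefore shows that the law of the canonical pair list of $W_N$ is the push-forward of the product law $\muP$ on $\A^{N-1}$ under $x\mapsto\pi_T(x\cdot)$. It then suffices to show that, for every finite pair sequence $((u_1,k_1),\ldots,(u_m,k_m))$,
\begin{equation*}
 \sum_{x\in\A^{N-1}:\ \pi_T(x\cdot)=((u_1,k_1),\ldots,(u_m,k_m))}\muP(x)
 =\prod_{j=0}^{m-1}K_N\bigl((C_j,f_j,R_j);u_{j+1},k_{j+1}\bigr),
\end{equation*}
where the right side is understood as zero unless the path is feasible and ends with $R_m=0$.

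The key step is to identify the backward parser with the bijection of Lemma~\ref{lem:boundary-recursion}. Consider the word $x$ factored over $C_j$ after $j$ passes of Definition~\ref{def:recovery}. By the invariant in the proof of Proposition~\ref{prop:recovery} (and of Proposition~\ref{prop:canonical-interface}), $w=V_jU_j$, where $V_j$ is the active left part and $U_j=u_j^{k_j}\cdots u_1^{k_1}a$. Hence $V_j$ is a concatenation of $C_j$-codewords of length $|x|-\sum_{i\le j}k_i|u_i|=R_j$. I claim the set of possible $V_j$, given the first $j$ pairs, is exactly $\Wcal(C_j,f_j,R_j)$, with $f_0=\varnothing$ and $f_j=u_j^{k_j+1}$.

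I would prove the claim by induction on $j$. The exclusion ``does not end in $u_j^{k_j+1}$'' is exactly the maximality of $k_j$ in the terminal run: the next rightmost active factor cannot continue the run. This is the content of \eqref{eq:canonical-condition} as derived in Proposition~\ref{prop:recovery}. Conversely, given $V_{j}$ in this set, the next pass selects the maximal terminal run $u^k$ of $V_j$, with $u\neq f_j$. It then regroups the rest over $C_j[u,k]$, producing a word in $\Wcal(C_j[u,k],u^{k+1},R_j-k|u|)$. This is precisely the bijection in Lemma~\ref{lem:boundary-recursion}, and it multiplies the weight by $q_u^k$. One subtlety needs checking: the parser treats the terminal block as $u^kU_j$, i.e.\ ``$s$'' is the tail codeword $U_j$, and the case $s=u$ must be handled consistently with $u\ne f_j$. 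I expect this bookkeeping, showing that the active/tail split in Definition~\ref{def:recovery} matches the run structure in $\Wcal$ exactly, to be the main obstacle. I would resolve it using the dummy-tail picture of Proposition~\ref{prop:canonical-interface}.

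Given the claim, the probability computation telescopes. The $\muP$-mass of the set of $x$ whose first $j$ pairs are prescribed equals $\prod_{i\le j}q_{u_i}^{k_i}\cdot B(C_j,f_j,R_j)$. This is proved by induction: the mass factors as the tail weight $\muP(u_j^{k_j}\cdots u_1^{k_1})$ times the mass of the admissible left contexts. Taking ratios of consecutive cylinder masses reproduces exactly the kernel \eqref{eq:boundary-kernel}. The parser stops precisely when the active list is empty, that is, when $R_j=0$. At that point $B=1$ and the total mass equals the product of kernels, which proves the distributional identity. Finally, \eqref{eq:residual-identity} follows from the update $R_{j+1}=R_j-k|u|$ with $R_0=N-1$ and the definition of $G_j$ in \eqref{eq:ideal-functionals}. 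Along the source representation it follows from the identity $|V_j|=N-|U_j|$, together with \eqref{eq:length-budget-partial}.
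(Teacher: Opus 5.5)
Your proposal is correct and follows essentially the same route as the paper: you reduce to the left context in $\A^{N-1}$ via terminal invariance, show inductively that after a given pair history the unparsed context ranges over $\Wcal(C_j,f_j,R_j)$ with weight $\muP$, and read off the kernel from the partition in Lemma~\ref{lem:boundary-recursion}. The only difference is presentational: you work with cylinder masses $\prod_i q_{u_i}^{k_i}B(C_j,f_j,R_j)$, while the paper uses the conditional law $\muP(y)/B(C_j,f_j,R_j)$. The $s=u$ bookkeeping you flag is harmless, since in either case $k$ is simply the maximal terminal run of $u$ inside the active part $V_j$, which is exactly the run used in that lemma.
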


\begin{proof}
Write $W_N=Ya$, where $a$ is the final source symbol.  By \eqref{eq:terminal-invariance}, the pair list depends on $Y$ but not on $a$.  Initially $Y$ has product weight $\muP(y)$ on $\A^{N-1}$, and $B(\A,\varnothing,N-1)=1$.  Inductively, after a given pair history, the unparsed left context has conditional distribution
\begin{equation}\label{eq:conditional-context}
 \frac{\muP(y)}{B(C_j,f_j,R_j)},
 \qquad y\in\Wcal(C_j,f_j,R_j).
\end{equation}
Lemma~\ref{lem:boundary-recursion} partitions these contexts by the next maximal terminal run and yields exactly the transition probability \eqref{eq:boundary-kernel}; conditioned on the transition, \eqref{eq:conditional-context} has the same form at the successor.  This proves equality of the pair laws.  Equation \eqref{eq:residual-identity} follows from \eqref{eq:exact-budget}.
\end{proof}

The boundary chain is used only as an exact probabilistic representation; recursive evaluation of $B(C,f,L)$ is not proposed as a parsing algorithm.

The comparison law $\Pforb$ uses the same code and forbidden-word updates but no finite boundary.  Its kernel is
\begin{equation}\label{eq:forbidden-kernel}
 \widetilde K((C,f);u,k)
 :=\frac{(1-q_u)q_u^k}{1-q_f},
 \qquad u\in C\setminus\{f\},\quad k\ge1.
\end{equation}
It is the ideal kernel \eqref{eq:ideal-joint} conditioned on avoiding $f$.

\begin{lemma}[Uniform ideal-to-forbidden density]\label{lem:forbidden-density}
There is $K_{\mathbf p}<\infty$ such that, for every $m$,
\begin{equation}\label{eq:forbidden-density}
 \frac{d\Pforb}{d\Pideal}\bigg|_{\F_m}
 =\prod_{j=1}^m
 \frac{\one_{\{P_j\ne f_{j-1}\}}}{1-q_{f_{j-1}}}
 \le K_{\mathbf p}
 \qquad\Pideal\text{-almost surely}.
\end{equation}
Consequently, $\Pforb(A)\le K_{\mathbf p}\Pideal(A)$ for every $A\in\F_m$.
\end{lemma}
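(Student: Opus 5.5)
The plan is to compute the Radon--Nikodym derivative one transition at a time and then bound the resulting product uniformly, using distinctness of the selected prefixes.

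\emph{Density formula.} Both $\Pideal$ and $\Pforb$ are laws on $\Omega$ built from one-step kernels given the current history, with $C_{j-1}$ and $f_{j-1}=P_{j-1}^{K_{j-1}+1}$ (and $f_0=\varnothing$, $q_\varnothing=0$) being $\F_{j-1}$-measurable. For a fixed feasible $m$-step cylinder $\{(P_j,K_j)=(u_j,k_j),\ j\le m\}$, each measure is the product of its one-step kernels along the path. By \eqref{eq:ideal-joint} and \eqref{eq:forbidden-kernel}, the ratio of kernels at step $j$ is
\begin{equation*}
 \frac{(1-q_{u_j})q_{u_j}^{k_j}\one_{\{u_j\ne f_{j-1}\}}/(1-q_{f_{j-1}})}{(1-q_{u_j})q_{u_j}^{k_j}}
 =\frac{\one_{\{u_j\ne f_{j-1}\}}}{1-q_{f_{j-1}}} .
\end{equation*}
The ideal kernel is strictly positive on every $(u,k)$ with $u\in C_{j-1}$, and the forbidden kernel is supported on a subset of these. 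Hence $\Pforb|_{\F_m}\ll\Pideal|_{\F_m}$, and multiplying the ratios along the path gives the product formula in \eqref{eq:forbidden-density}.

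\emph{Uniform bound.} Drop the indicators, which are at most one. It then suffices to bound $\prod_{j=1}^{m}(1-q_{f_{j-1}})^{-1}$ uniformly over all valid prescriptions.

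1. Since $f_{j}=P_j^{K_j+1}$ with $K_j\ge1$, we have $q_{f_j}=q_{P_j}^{K_j+1}\le q_{P_j}^2\le\rhoP^2<1$.
2. Using $-\log(1-x)\le x/(1-x)$ together with step 1, we get
\begin{equation*}
 \log\prod_{j}\frac1{1-q_{f_{j-1}}}\le\frac1{1-\rhoP^2}\sum_{j}q_{P_j}^2 .
\end{equation*}
3. By Proposition~\ref{prop:recovery}, every valid prescription has pairwise distinct selected prefixes. This holds $\Pideal$-a.s., because the ideal chain only generates valid prescriptions.
4. Therefore, with $\sigma:=\sum_a p_a^2\le\rhoP<1$,
\begin{equation*}
 \sum_j q_{P_j}^2\le\sum_{w\in\A^+}\muP(w)^2=\sum_{n\ge1}\sigma^n=\frac{\sigma}{1-\sigma}.
\end{equation*}

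Combining these steps, we may take $K_{\mathbf p}=\exp\bigl(\sigma/((1-\sigma)(1-\rhoP^2))\bigr)$, which is independent of $m$ and of the path.

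\emph{Consequence.} For $A\in\F_m$, we have $\Pforb(A)=\mathbb E_{\Pideal}\bigl[\tfrac{d\Pforb}{d\Pideal}|_{\F_m}\one_A\bigr]\le K_{\mathbf p}\Pideal(A)$.

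The only step that needs care is step 3: the uniformity in $m$ rests entirely on distinctness of the selected prefixes. A naive bound would only give $(1-\rhoP^2)^{-m}$, so I would make sure to cite the "no selected prefix can occur twice" clause of Proposition~\ref{prop:recovery}, which applies to arbitrary valid prescriptions.
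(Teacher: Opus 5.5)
Your proposal is correct and follows essentially the same route as the paper. The paper also gets the density by conditioning the ideal kernel on avoiding $f_{j-1}$, and gets the uniform bound from $q_{f_j}\le q_{P_j}^2\le\rhoP^2$ together with distinctness of the selected prefixes, which makes $\sum_j q_{f_j}\le\sum_{w\in\A^+}\muP(w)^2<\infty$. You go further only in writing the constant explicitly via $-\log(1-x)\le x/(1-x)$, where the paper just asserts the product is bounded by a deterministic constant.
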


\begin{proof}
After step $j$, $f_j=P_j^{K_j+1}$ and hence $q_{f_j}\le q_{P_j}^2$.  The selected prefixes are distinct, so
\begin{equation*}
 \sum_{j\ge1}q_{f_j}
 \le\sum_{w\in\A^+}\muP(w)^2
 =\sum_{n\ge1}\left(\sum_{a\in\A}p_a^2\right)^n<\infty.
\end{equation*}
Since $q_{f_j}\le\rhoP^2<1$, the product of $(1-q_{f_j})^{-1}$ is bounded by a deterministic constant.  The likelihood-ratio identity follows directly from conditioning the ideal kernel.
\end{proof}

Choose once and for all
\begin{equation}\label{eq:cutoff-choice}
\begin{aligned}
 1&<\mathfrak r<\rhoP^{-1},\qquad
 \eta:=\log_d(d\rhoP\mathfrak r)\in(0,1),\\
 \eta&<\vartheta<1,\qquad
 \omega_N:=\lceil N^\vartheta\rceil.
\end{aligned}
\end{equation}
Let $\mathcal C_N$ be the collection of codes obtainable from $\A$ by a valid history with length budget $\sum_i k_i|u_i|\le N-1$.

\begin{theorem}[Uniform boundary renewal]\label{thm:uniform-renewal-new}
Define
\begin{equation}\label{eq:H-def}
 \mathcal H(C,f,L):=\frac{\lbar_C B(C,f,L)}{1-q_f}.
\end{equation}
Then
\begin{equation}\label{eq:beta-def}
 \beta_N:=\sup_{\substack{C\in\mathcal C_N,\ f\in C\cup\{\varnothing\}\\
                         L\in\mathbb Z,\ \omega_N\le L\le N}}
 |\mathcal H(C,f,L)-1|\longrightarrow0.
\end{equation}
In fact, for suitable $c,K>0$ depending only on $\mathbf p,d,\mathfrak r,\vartheta$,
\begin{equation}\label{eq:beta-rate}
 \beta_N\le KN e^{-cN^\vartheta}
\end{equation}
for all sufficiently large $N$.
\end{theorem}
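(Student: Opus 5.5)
The plan is to reduce $B(C,f,L)$ to the renewal mass of the i.i.d.\ codeword process over $C$, write that mass through the product identity \eqref{eq:M-product}, and control the renewal error by a Cauchy estimate on a circle of fixed radius $\mathfrak r>1$.

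\emph{Step 1: reduction to $f=\varnothing$.} By unique factorization over the complete prefix code $C$, $B(C,\varnothing,L)=:u_L(C)$ is the probability that the renewal process with step law $\muP(w)$, $w\in C$, visits $L$. A nonempty word of $\Wcal(C,\varnothing,L)$ ends in $f$ exactly when its last factor is $f$. Hence, for $f\in C$,
\begin{equation*}
 B(C,f,L)=u_L(C)-q_f\,u_{L-|f|}(C),
\end{equation*}
where $u_{L'}:=0$ for $L'<0$. It therefore suffices to prove
\begin{equation*}
 \sup_{C,\,L\ge\omega_N-?}\bigl|\lbar_C\,u_L(C)-1\bigr|\le KNe^{-cN^\vartheta}.
\end{equation*}
The term $q_f u_{L-|f|}$ needs a short case split:
\begin{itemize}
 \item If $|f|\ge L/2$, then $q_f\le\rhoP^{L/2}$. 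Together with $u\le1$ and $\lbar_C\le N$ (by \eqref{eq:valid-max-bound} every codeword of $C\in\mathcal C_N$ has length at most $N$), this term is $O(N\rhoP^{N^\vartheta/2})$, which is absorbed into the claimed rate. The same bound $\lbar_C u_L-1=O(\cdot)$ then gives the factor $(1-q_f)^{-1}$ for free, up to a constant, since $q_f\le\rhoP$.
 \item If $|f|<L/2$, then $L-|f|\ge\omega_N/2$. Applying the $\varnothing$-estimate at both $L$ and $L-|f|$ gives $\lbar_C B(C,f,L)=(1-q_f)(1+O(\epsilon_N))$.
\end{itemize}
The threshold $\omega_N$ may have to be halved in the $\varnothing$-estimate; this costs only a change of $c$.

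\emph{Step 2: generating function.} Since $\sum_L u_L z^L=(1-M_C(z))^{-1}$, \eqref{eq:M-product} gives
\begin{equation*}
 \sum_L u_L z^L=\frac{1}{(1-z)\Pi(z)},\qquad \Pi(z):=\prod_j Q_j(z),\qquad \Pi(1)=\lbar_C,
\end{equation*}
where the last equality is \eqref{eq:lbar-product}. Writing $1/\Pi(z)=\sum_n \pi_n z^n$, we obtain
\begin{equation*}
 u_L-\frac{1}{\lbar_C}=-\sum_{n>L}\pi_n .
\end{equation*}
Each factor satisfies $Q_j(z)=\bigl(1-(q_jz^{\ell_j})^{K_j+1}\bigr)/\bigl(1-q_jz^{\ell_j}\bigr)$. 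Its zeros satisfy $|z|=q_j^{-1/\ell_j}\ge\rhoP^{-1}>\mathfrak r$, because $q_j\le\rhoP^{\ell_j}$. Hence $1/\Pi$ is analytic on $|z|\le\mathfrak r$, and Cauchy's estimate yields
\begin{equation*}
 |\pi_n|\le\mathfrak r^{-n}\max_{|z|=\mathfrak r}|\Pi(z)|^{-1}.
\end{equation*}

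\emph{Step 3: uniform lower bound on $|\Pi|$ (the main obstacle).} This is where uniformity over all histories in $\mathcal C_N$ must be earned. On $|z|=\mathfrak r$ put $x_j:=q_j\mathfrak r^{\ell_j}\le(\rhoP\mathfrak r)^{\ell_j}<1$. Then
\begin{equation*}
 |Q_j(z)|\ge\frac{1-x_j^{K_j+1}}{1+x_j}\ge\frac{1-\rhoP\mathfrak r}{1+x_j}\cdot\frac{1}{1-\rhoP\mathfrak r}\cdot(1-x_j)\cdots,
\end{equation*}
and more usefully $-\log|Q_j(z)|\le C\,x_j$ with $C$ depending only on $\rhoP\mathfrak r$. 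So the task is to bound $\sum_j(\rhoP\mathfrak r)^{\ell_j}$ uniformly. Two facts from Section~\ref{sec:deterministic} do this:
\begin{itemize}
 \item selected prefixes are distinct, so at most $d^\ell$ of them have length $\ell$;
 \item the budget gives at most $N$ selected prefixes in total.
\end{itemize}
Splitting at $\ell_0=\log_dN$,
\begin{equation*}
 \sum_j(\rhoP\mathfrak r)^{\ell_j}\le\sum_{\ell\le\ell_0}(d\rhoP\mathfrak r)^\ell+N\sum_{\ell>\ell_0}(\rhoP\mathfrak r)^\ell=O(N^\eta),
\end{equation*}
using $d\rhoP\mathfrak r=d^\eta$ from \eqref{eq:cutoff-choice}. (If $d\rhoP\mathfrak r\le1$ the bound is even $O(\log N)$.) Thus $\max_{|z|=\mathfrak r}|\Pi|^{-1}\le e^{C'N^\eta}$.

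\emph{Step 4: conclusion.} Combining Steps 2 and 3,
\begin{equation*}
 |\lbar_Cu_L-1|\le\lbar_C\sum_{n>L}|\pi_n|\le\frac{N\,\mathfrak r^{-L}}{\mathfrak r-1}\,e^{C'N^\eta}.
\end{equation*}
For $L\ge\omega_N/2$ and $\vartheta>\eta$ this is at most $KNe^{-cN^\vartheta}$ for large $N$. Together with the boundary-term cases of Step 1, this yields \eqref{eq:beta-rate} and hence \eqref{eq:beta-def}.

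I expect Step 3 to be the delicate point. The choice of $\eta$ and $\vartheta$ in \eqref{eq:cutoff-choice} is designed exactly so that the $e^{O(N^\eta)}$ growth of $1/\Pi$ is beaten by the decay $\mathfrak r^{-N^\vartheta}$. The plan is to check carefully that the count of short prefixes, via Lemma~\ref{lem:prefix-count} or the bound $d^\ell$, and the budget bound together give the $O(N^\eta)$ estimate.
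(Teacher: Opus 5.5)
Your proposal is correct and follows essentially the paper's route: the paper also applies a Cauchy estimate on $|z|=\mathfrak r$ to the renewal generating function factored through \eqref{eq:M-product}, and gets the same uniform zero-free bound from $\sum_i a_0^{\ell_i}$, using distinctness of selected prefixes and the length budget. The differences are minor. The paper keeps $1-\phi_f(z)$ in the numerator, where it is bounded by $2$ on the circle, instead of your decomposition $B(C,f,L)=u_L-q_fu_{L-|f|}$ with a case split on $|f|$. It also invokes Lemma~\ref{lem:prefix-count} to get $O(N^\eta/\log N)$, whereas your cruder $O(N^\eta)$ already suffices because $\eta<\vartheta$.
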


The proof is in Appendix~\ref{app:renewal}.

Whenever a pair path is compared with a length-$N$ block, write $R_j:=N-G_j$.

\begin{lemma}[Bulk feasibility]\label{lem:bulk-feasibility}
For all sufficiently large $N$, every $\widetilde K$-admissible $m$-step path satisfying $R_m\ge\omega_N$ is feasible for the boundary chain, and $B(C_j,f_j,R_j)>0$ for $0\le j\le m$.  The same holds up to a bounded stopping time $\tau$ on $\{R_\tau\ge\omega_N\}$.
\end{lemma}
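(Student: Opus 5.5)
The plan is to reduce feasibility to positivity of the boundary masses along the path, and to obtain that positivity from Theorem~\ref{thm:uniform-renewal-new}. Fix a $\widetilde K$-admissible $m$-step path with codes $C_j$, forbidden words $f_j$, and residuals $R_j=N-G_j$, where $R_m\ge\omega_N$. First check the path's monotonicity. Since $G_j$ is nondecreasing in $j$, we get $R_0\ge R_1\ge\cdots\ge R_m\ge\omega_N$. The boundary chain starts at $R_0=N-1$, and with the convention $R_j=N-G_j$ one has $R_j=(N-1)-\sum_{i\le j}K_i|P_i|$ along the chain, which is \eqref{eq:residual-identity}. This is only an off-by-one shift relative to $N-G_j$, so enlarging $\omega_N$ by one, or noting that $\omega_N-1\ge\omega_N'$ for a slightly smaller exponent, handles it. The length budget $\sum_{i\le j}K_i|P_i|\le N-1-\omega_N\le N-1$ holds for every $j\le m$, so each $C_j\in\mathcal C_N$. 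In addition, $f_j\in C_j\cup\{\varnothing\}$ and $\omega_N\le R_j\le N$.

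Next, apply Theorem~\ref{thm:uniform-renewal-new}. For $N$ large enough that $\beta_N<1/2$, every state on the path satisfies $\mathcal H(C_j,f_j,R_j)\ge 1/2$. Since $\lbar_{C_j}<\infty$ and $1-q_{f_j}>0$, this gives $B(C_j,f_j,R_j)\ge (1-q_{f_j})/(2\lbar_{C_j})>0$. The threshold on $N$ is uniform over all histories, because $\beta_N$ is a supremum over $\mathcal C_N$.

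Then verify feasibility step by step. Fix a transition from state $j-1$ to state $j$, given by a pair $(u,k)=(P_j,K_j)$ that the $\widetilde K$-path selects. Admissibility for $\widetilde K$ gives $u\in C_{j-1}\setminus\{f_{j-1}\}$ and $k\ge1$. The residual bound gives $k|u|=R_{j-1}-R_j\le R_{j-1}$, and $R_{j-1}\ge\omega_N>0$, so the boundary chain has not stopped. The updates \eqref{eq:boundary-update} coincide with the forbidden-chain updates $C_j=C_{j-1}[u,k]$ and $f_j=u^{k+1}$. The kernel \eqref{eq:boundary-kernel} therefore has a well-defined denominator, since $B(C_{j-1},f_{j-1},R_{j-1})>0$. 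It also assigns strictly positive probability $q_u^kB(C_j,f_j,R_j)/B(C_{j-1},f_{j-1},R_{j-1})$ to this transition. Induction on $j$ shows that the whole cylinder has positive $\Pbound$-probability, and that $m\le\mathcal N_N$ since no $R_j$ vanishes.

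For a bounded stopping time $\tau\le M$, apply the same argument pathwise. On $\{R_\tau\ge\omega_N\}$, the path up to time $\tau$ is an admissible path of deterministic length $\tau(\omega)$ with final residual at least $\omega_N$, so the previous argument applies verbatim. The event depends only on the first $\tau$ coordinates, so this suffices.

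The main obstacle is bookkeeping rather than estimation: the $\mathcal C_N$ budget convention and the $N$ versus $N-1$ offset in $R_j$ must match the range of $L$ allowed in $\beta_N$. All analytic content is supplied by Theorem~\ref{thm:uniform-renewal-new}.
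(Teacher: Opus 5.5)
Your argument is correct and is essentially the paper's proof. Monotonicity of $R_j$ places every state in the range of Theorem~\ref{thm:uniform-renewal-new}, $\beta_N<1$ forces $B(C_j,f_j,R_j)>0$, $R_j\ge\omega_N>0$ gives $K_j|P_j|\le R_{j-1}$, and the stopping-time case follows pathwise. The one blemish is the claimed off-by-one shift: since $G_j=1+\sum_{i\le j}K_i|P_i|$, one has $N-G_j=N-1-\sum_{i\le j}K_i|P_i|$ exactly, as in \eqref{eq:residual-identity}, so $\omega_N$ needs no adjustment, and enlarging it would in fact change the statement being proved.
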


\begin{proof}
Along such a path, $R_j\ge R_m\ge\omega_N$ and $C_j\in\mathcal C_N$.  Since $\beta_N<1$ for large $N$, Theorem~\ref{thm:uniform-renewal-new} gives $\mathcal H(C_j,f_j,R_j)>0$, hence $B(C_j,f_j,R_j)>0$.  For $j<m$, monotonicity gives $R_{j+1}\ge R_m>0$, hence $K_{j+1}|P_{j+1}|\le R_j$.  The stopping-time statement follows pathwise.
\end{proof}

\begin{proposition}[Exact Doob transform]\label{prop:doob}
Write $\mathcal H(s)=\mathcal H(C,f,L)$ for a state $s=(C,f,L)$, and let
\begin{equation*}
 s'=(C[u,k],u^{k+1},L-k|u|)
\end{equation*}
be a feasible boundary transition.  Then
\begin{equation}\label{eq:doob-kernel}
 K_N(s;u,k)=\widetilde K((C,f);u,k)
 \frac{\mathcal H(s')}{\mathcal H(s)}.
\end{equation}
Consequently, for all sufficiently large $N$, if $A\in\F_m$ and $A\subset\{R_m\ge\omega_N\}$, then
\begin{equation}\label{eq:doob-path}
 \Pbound(A)=\Eforb\bigl[\one_A\mathcal H(C_m,f_m,R_m)\bigr]
\end{equation}
and
\begin{equation}\label{eq:bulk-event-transfer}
 |\Pbound(A)-\Pforb(A)|\le\beta_N\Pforb(A).
\end{equation}
If $\tau$ is bounded, the same statements hold for $A\in\F_\tau$ with $A\subset\{R_\tau\ge\omega_N\}$, with $m$ replaced by $\tau$.
\end{proposition}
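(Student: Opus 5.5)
The kernel identity \eqref{eq:doob-kernel} is pure algebra, so I would verify it first. With $s=(C,f,L)$ and $s'=(C[u,k],u^{k+1},L-k|u|)$, the right-hand side expands to
\begin{equation*}
 \frac{(1-q_u)q_u^k}{1-q_f}\cdot
 \frac{\lbar_{C[u,k]}\,B(s')/(1-q_{u^{k+1}})}{\lbar_C\,B(s)/(1-q_f)}.
\end{equation*}
Proposition~\ref{prop:product-identities} applied to one augmentation gives $\lbar_{C[u,k]}=\lbar_C(1+q_u+\cdots+q_u^k)$, so $\lbar_{C[u,k]}/\lbar_C=(1-q_u^{k+1})/(1-q_u)$. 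Since $q_{u^{k+1}}=q_u^{k+1}$, the factor $(1-q_u^{k+1})/(1-q_u)$ cancels against $(1-q_u)/(1-q_{u^{k+1}})$. The factors $1-q_f$ also cancel, and what remains is $q_u^kB(s')/B(s)$. This is exactly \eqref{eq:boundary-kernel}. The identity requires $B(s)>0$ so that both sides make sense, and it holds whether or not $B(s')=0$.

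For the path formula \eqref{eq:doob-path}, take a single feasible $m$-step path $\omega\in A$ with states $s_0,\dots,s_m$. By Lemma~\ref{lem:bulk-feasibility}, every $\widetilde K$-admissible path with $R_m\ge\omega_N$ is boundary-feasible and satisfies $B(s_j)>0$ for all $j\le m$, so the identity applies at every step. The path probability is then
\begin{equation*}
 \Pbound(\omega)=\prod_{j}\widetilde K\,\frac{\mathcal H(s_{j+1})}{\mathcal H(s_j)}=\Pforb(\omega)\,\frac{\mathcal H(s_m)}{\mathcal H(s_0)}.
\end{equation*}
At the initial state, $\mathcal H(s_0)=\lbar_\A B(\A,\varnothing,N-1)/(1-0)=1\cdot 1=1$, because $\lbar_\A=\sum_a p_a=1$. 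Summing over the countably many cylinder paths in $A$ gives \eqref{eq:doob-path}.

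There is one subtlety. A path with positive $\Pforb$-probability and $R_m\ge\omega_N$ cannot have zero $\Pbound$-probability, since $\mathcal H>0$ along it. Conversely, a path with positive $\Pbound$-probability lying in $A$ is $\widetilde K$-admissible, because $P_j\ne f_{j-1}$ is enforced by the boundary kernel. The two laws therefore charge the same paths in $A$. The estimate \eqref{eq:bulk-event-transfer} follows from $|\mathcal H(C_m,f_m,R_m)-1|\le\beta_N$ on $A$. Here $C_m\in\mathcal C_N$ because $G_m\le N-\omega_N\le N-1$, and $\omega_N\le R_m\le N$, so Theorem~\ref{thm:uniform-renewal-new} applies.

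For a bounded stopping time $\tau\le M$, decompose $A\in\F_\tau$ as the disjoint union of the sets $A\cap\{\tau=m\}\in\F_m$ for $m\le M$. Each piece lies in $\{R_m\ge\omega_N\}$, so the fixed-$m$ statements apply to it, and summing over $m$ gives both claims. The main obstacle is not the algebra but this bookkeeping of supports and feasibility. It is handled by Lemma~\ref{lem:bulk-feasibility} together with the convention in Definition~\ref{def:boundary-chain} that events are cylinders of feasible paths.
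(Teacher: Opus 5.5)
Your proposal is correct and follows the paper's proof essentially step for step. The one-step update $\lbar_{C[u,k]}=\lbar_C(1-q_u^{k+1})/(1-q_u)$ gives the kernel identity, and Lemma~\ref{lem:bulk-feasibility} together with telescoping from $\mathcal H(\A,\varnothing,N-1)=1$ gives the path density. Theorem~\ref{thm:uniform-renewal-new} then gives the transfer bound, and the bounded stopping time is handled by decomposing over $\{\tau=m\}$. Your extra support bookkeeping, showing that both laws charge the same paths in $A$, is a sound elaboration of a point the paper leaves implicit.
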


\begin{proof}
The mean-length update obtained from \eqref{eq:M-product} is
\begin{equation}\label{eq:lbar-update}
 \lbar_{C[u,k]}=\lbar_C(1+q_u+\cdots+q_u^k)
 =\lbar_C\frac{1-q_u^{k+1}}{1-q_u}.
\end{equation}
Substituting \eqref{eq:H-def}, \eqref{eq:boundary-kernel}, and \eqref{eq:forbidden-kernel} gives \eqref{eq:doob-kernel} exactly.  By Lemma~\ref{lem:bulk-feasibility}, every path in the stated events is feasible.  Along a length-$m$ path the product of the ratios in \eqref{eq:doob-kernel} telescopes.  Since $\lbar_{\A}=1$, $B(\A,\varnothing,N-1)=1$, and $q_{\varnothing}=0$,
\begin{equation*}
 \mathcal H(\A,\varnothing,N-1)=1.
\end{equation*}
Thus the path density is $\mathcal H(C_m,f_m,R_m)$, proving \eqref{eq:doob-path}.  On $R_m\ge\omega_N$, Theorem~\ref{thm:uniform-renewal-new} gives \eqref{eq:bulk-event-transfer}.  For bounded $\tau$ and $A\in\F_\tau$, apply the deterministic-time identity to the disjoint events $A\cap\{\tau=j\}$.
\end{proof}

\section{Proof of Theorem~\ref{thm:main}}\label{sec:main-proof}

For all sufficiently large $N$, work under the exact boundary law $\Pbound$ and use $\mathcal N_N$ from Definition~\ref{def:boundary-chain}.  Put
\begin{equation}\label{eq:kappa}
 \kappa_N:=e^{-\gamma}\hp\frac{N}{\log N},
 \qquad
 x_N:=N-\omega_N,
\end{equation}
and define the last bulk index
\begin{equation}\label{eq:sigma}
 \sigma_N:=\max\{0\le j\le\mathcal N_N:R_j\ge\omega_N\}.
\end{equation}
For fixed $0<\delta<1$, set
\begin{equation}\label{eq:m-pm}
 m_-:=\lfloor(1-\delta)\kappa_N\rfloor,
 \qquad
 m_+:=\lceil(1+\delta)\kappa_N\rceil.
\end{equation}

By Theorem~\ref{thm:ideal-growth}, $\log m_\pm/\log N\to1$ and
\begin{equation*}
 \frac{e^\gamma}{\hp}\frac{m_\pm\log m_\pm}{N}
 \longrightarrow1\pm\delta,
 \qquad
 \frac{x_N}{N}\longrightarrow1.
\end{equation*}
Consequently,
\begin{equation}\label{eq:ideal-sandwich}
 \Pideal\{G_{m_-}>x_N\}\longrightarrow0,
 \qquad
 \Pideal\{G_{m_+}\le x_N\}\longrightarrow0.
\end{equation}
Lemma~\ref{lem:forbidden-density} transfers these limits to $\Pforb$.  Let $A_m=\{G_m\le x_N\}=\{R_m\ge\omega_N\}$.  Proposition~\ref{prop:doob} gives $\Pbound(A_{m_-})\to1$ and $\Pbound(A_{m_+})\to0$: for the first limit apply \eqref{eq:bulk-event-transfer} to $A_{m_-}$ and take complements only afterward, while the second limit follows directly.  Since $A_m=\{\sigma_N\ge m\}$ under the convention of Definition~\ref{def:boundary-chain},
\begin{equation}\label{eq:sigma-sandwich}
 \Pbound\{m_-\le\sigma_N<m_+\}\longrightarrow1.
\end{equation}

It remains to control the non-simple complexity excess at the random bulk index.  For $\varepsilon>0$ and $1\le j\le m_+$, let $E_{j,N}$ be the feasible $j$-step cylinder
\begin{equation*}
 E_{j,N}:=\{D_{j-1}\le\varepsilon\kappa_N<D_j,\ R_j\ge\omega_N\}.
\end{equation*}
Since $D_j$ is nondecreasing, the events $E_{j,N}$ are disjoint and
\begin{align}
 \Pbound\{D_{\sigma_N}>\varepsilon\kappa_N\}
 &\le \Pbound\{\sigma_N\ge m_+\}
      +\sum_{j=1}^{m_+}\Pbound(E_{j,N}).
 \label{eq:D-sigma-bound}
\end{align}
Each $E_{j,N}$ belongs to $\F_j$ and is contained in $\{R_j\ge\omega_N\}$.  Proposition~\ref{prop:doob} and Lemma~\ref{lem:forbidden-density} therefore give
\begin{align*}
 \sum_{j=1}^{m_+}\Pbound(E_{j,N})
 &\le(1+\beta_N)\sum_{j=1}^{m_+}\Pforb(E_{j,N})\\
 &\le(1+\beta_N)\Pforb\{D_{m_+}>\varepsilon\kappa_N\}\\
 &\le K_{\mathbf p}(1+\beta_N)
      \Pideal\{D_{m_+}>\varepsilon\kappa_N\}.
\end{align*}
The last probability tends to zero by \eqref{eq:T-growth}, since $m_+/\kappa_N\to1+\delta$.  Together with \eqref{eq:sigma-sandwich}, this proves
\begin{equation}\label{eq:D-sigma}
 D_{\sigma_N}=o_{\Pbound}(\kappa_N).
\end{equation}
Because $T_{\sigma_N}=\sigma_N+D_{\sigma_N}$, equations \eqref{eq:sigma-sandwich} and \eqref{eq:D-sigma}, followed by $\delta\downarrow0$, give
\begin{equation}\label{eq:bulk-complexity}
 T_{\sigma_N}=\kappa_N\bigl(1+o_{\Pbound}(1)\bigr).
\end{equation}

The contribution of the remaining boundary segment admits a deterministic bound.  If $\sigma_N<\mathcal N_N$, the next pair crosses below the cutoff.  Its copy factor is at most $N$, hence its complexity is at most $\log_2(N+1)$.  After that crossing the remaining length budget is below $\omega_N$, and
\begin{equation*}
 \sum\log_2(k_i+1)\le\sum k_i
 \le\sum k_i|u_i|<\omega_N.
\end{equation*}
Therefore
\begin{equation}\label{eq:boundary-remainder}
 0\le T_{\mathcal N_N}-T_{\sigma_N}
 \le\log_2(N+1)+\omega_N
 =o(\kappa_N).
\end{equation}
By Theorem~\ref{thm:exact-pair-law}, $T_{\mathcal N_N}$ has the same distribution as $\cT(W_N)$.  Thus \eqref{eq:bulk-complexity}--\eqref{eq:boundary-remainder} prove \eqref{eq:main-prob}.

Finally, Lemma~\ref{lem:global-envelope} implies that
\begin{equation*}
 0\le \frac{\cT(w)}{e^{-\gamma}\hp N/\log N}\le K_{\mathbf p,d}
\end{equation*}
uniformly over $w\in\A^N$ and all sufficiently large $N$.  Uniform boundedness together with convergence in probability gives \eqref{eq:main-moments}; \eqref{eq:main-bits} is the identity $\hp/\log N=H_2(\mathbf p)/\log_2N$.

\section{Discussion}\label{sec:discussion}

The first-order source dependence is entirely through the entropy $\hp$, whereas $e^{-\gamma}$ is universal for the ideal augmentation mechanism.  The exact boundary representation explains why that ideal constant survives canonical parsing of a fixed-length block: the finite boundary changes the bulk law by one telescoping density rather than by a product of uncontrolled local errors.

The result does not provide a rate, an almost-sure limit, concentration, or a second-order term.  These would require quantitative control of the critical-scale remainder and sharper fluctuations of sampled codeword lengths.  Extending the argument to finite-state sources appears to require a boundary renewal theorem uniform simultaneously in the evolving T-code and the source state; zero probabilities or a growing alphabet would introduce additional nonuniformities.

\appendices

\section{Ideal-chain estimates}\label{app:ideal-estimates}

Throughout the appendices, constants denoted by $K$ may change from line to line and depend only on the fixed source unless further parameters are displayed.

\begin{proof}[Proof of Lemma~\ref{lem:weight-envelope}]
Let $X_1,X_2,\ldots$ be i.i.d.\ with law $\mathbf p$ on an auxiliary space with probability $\Paux$ and expectation $\Eaux$.  Put $Y_i=-\log p_{X_i}$ and $\Sigma_n=Y_1+\cdots+Y_n$.  Since $Y_i\ge\alphaP$, for $x\ge1$,
\begin{align*}
 \sum_{\lambda(w)\le x}\muP(w)
 &=\sum_{n\ge1}\Paux\{\Sigma_n\le x\}
 \le x/\alphaP+1,\\
 \sum_{\lambda(w)\le x}|w|\muP(w)
 &\le (x/\alphaP+1)^2,\\
 \sum_{\lambda(w)\le x}\lambda(w)\muP(w)
 &\le x(x/\alphaP+1).
\end{align*}
For a set $S$ with $|S|\le m$, split at $x=2\log m$.  The displayed estimates control the low-information part.  On $\lambda(w)>x$, each word satisfies
\begin{equation*}
 \muP(w)\le e^{-x}=m^{-2},
 \qquad
 |w|\muP(w)\le\alphaP^{-1}\lambda(w)e^{-\lambda(w)},
\end{equation*}
and the function $t e^{-t}$ is decreasing for $t\ge1$.  Thus the three tail contributions are bounded respectively by
\begin{equation*}
 m e^{-x},
 \qquad
 \alphaP^{-1}mxe^{-x},
 \qquad
 mxe^{-x}.
\end{equation*}
Combining the two ranges proves \eqref{eq:weight-one}--\eqref{eq:weight-three}.
\end{proof}

\begin{proof}[Proof of Lemma~\ref{lem:nonsimple}]
Conditional on $P_j=u$, with $q=\muP(u)$,
\begin{align*}
 &\mathbb E\bigl[\log_2(K_j+1)-1\mid\F_{j-1},P_j=u\bigr]\\
 &\qquad=(1-q)\sum_{k\ge2}q^{k-1}
       \bigl(\log_2(k+1)-1\bigr)
 \le Kq,
\end{align*}
where the last bound is uniform for $q\le\rhoP<1$, and
\begin{equation*}
 \mathbb E(K_j-1\mid\F_{j-1},P_j=u)=\frac q{1-q}\le Kq.
\end{equation*}
The selected prefixes are distinct.  Lemma~\ref{lem:weight-envelope} therefore gives the pathwise bounds
\begin{equation*}
 \sum_{j\le m}\muP(P_j)\le K\log m,
 \qquad
 \sum_{j\le m}|P_j|\muP(P_j)\le K(\log m)^2.
\end{equation*}
Taking expectations and applying Markov's inequality proves \eqref{eq:complexity-excess} and \eqref{eq:length-excess}.
\end{proof}

\section{The critical one-scale estimate}\label{app:critical}

Let $\mathcal T$ be the shift $(\mathcal Tf)(s)=f(s+1)$ and define the deterministic linear recursion
\begin{equation}\label{eq:linear-recursion}
 \Lambda_0(s):=-\log(1-F_0(s)),
 \qquad
 \Lambda_n:=(I-\mathcal T)^n\Lambda_0.
\end{equation}
On the auxiliary i.i.d.\ source space used in the proof of Lemma~\ref{lem:weight-envelope},
\begin{equation}\label{eq:linear-renewal-representation}
 \Lambda_n(s)=\sum_{r\ge1}\frac1r
 \Eaux\left[e^{-s\Sigma_r}(1-e^{-\Sigma_r})^n\right].
\end{equation}
Indeed, expand $-\log(1-F_0(s))=\sum_{r\ge1}F_0(s)^r/r$ and apply $(I-\mathcal T)^n$ termwise.

\begin{lemma}[Linear critical scale]\label{lem:linear-critical}
For $u_n,s_n$ from \eqref{eq:critical-sequences},
\begin{equation}\label{eq:linear-critical}
 \Lambda_n(s_n)=E_1(u_n)+o(1).
\end{equation}
\end{lemma}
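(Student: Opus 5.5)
The plan is to evaluate the renewal representation \eqref{eq:linear-renewal-representation} directly by splitting the $r$-sum at the scale $r\approx \log n/\hp$. The heuristic is as follows. The factor $(1-e^{-\Sigma_r})^n\approx\exp(-ne^{-\Sigma_r})$ acts as a soft indicator of $\{\Sigma_r>\log n\}$. By the law of large numbers $\Sigma_r\approx\hp r$. The remaining factor has the exact mean $\Eaux e^{-s\Sigma_r}=F_0(s)^r$. Hence
\begin{equation*}
 \Lambda_n(s_n)\approx\sum_{r>\log n/\hp}\frac{F_0(s_n)^r}{r}\approx\int_{\log n/\hp}^\infty e^{-\hp s_n t}\frac{dt}{t}=E_1(s_n\log n)=E_1(u_n).
\end{equation*}
Fix a small $\varepsilon>0$ and put $r_\pm:=(1\pm\varepsilon)\log n/\hp$. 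I will show that $\limsup_n|\Lambda_n(s_n)-E_1(u_n)|\le K\varepsilon$ and then let $\varepsilon\downarrow0$.

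\emph{Lower range} $r<r_-$. I bound $e^{-s\Sigma_r}\le1$ and $(1-e^{-\Sigma_r})^n\le\exp(-ne^{-\Sigma_r})$. On the event $\{\Sigma_r\le(1-\varepsilon/2)\log n\}$ this factor is at most $\exp(-n^{\varepsilon/2})$. Summed against $1/r$ over $r\le n$, and combined with the trivial observation that terms with $r\ge n$ do not arise in this range, this contribution is negligible. For the complementary event I use a Chernoff bound with a small $\theta>0$:
\begin{equation*}
 \Paux\{\Sigma_r>(1-\varepsilon/2)\log n\}\le n^{-\theta(1-\varepsilon/2)}\bigl(\textstyle\sum_ap_a^{1-\theta}\bigr)^r,
\end{equation*}
where $\sum_a p_a^{1-\theta}=e^{\hp\theta+O(\theta^2)}$. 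For $r<r_-$ this is at most $n^{-\theta(\varepsilon/2-O(\theta))}$, which is a negative power of $n$ once $\theta$ is small relative to $\varepsilon$. Multiplying by $\sum_{r\le r_-}1/r=O(\log\log n)$ still gives $o(1)$.

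\emph{Window} $r_-\le r\le r_+$. Here every term is at most $1/r$, so the total is at most $\log(r_+/r_-)+o(1)=O(\varepsilon)$. The corresponding piece of the comparison sum $\sum_r F_0(s_n)^r/r$ is also $O(\varepsilon)$.

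\emph{Upper range} $r>r_+$. Here I compare with $\sum_{r>r_+}F_0(s_n)^r/r$. The error is
\begin{equation*}
 \Eaux\bigl[e^{-s\Sigma_r}\bigl(1-(1-e^{-\Sigma_r})^n\bigr)\bigr]\le\Eaux\bigl[(ne^{-\Sigma_r})^\theta\bigr]=n^\theta F_0(\theta)^r=n^\theta e^{-r(\hp\theta-O(\theta^2))}.
 \end{equation*}
This uses $1-(1-x)^n\le(nx)^\theta$ for $0<\theta\le1$. For $r>r_+$ and $\theta$ small relative to $\varepsilon$, the bound is at most $n^{-c\varepsilon\theta}e^{-c'\theta(r-r_+)}$, so the summed error is $o(1)$.

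\emph{Identifying the main term.} It remains to show that $\sum_{r>r_+}F_0(s_n)^r/r=E_1(u_n)+O(\varepsilon)+o(1)$. Write $F_0(s)=e^{-\hp s(1+O(s))}$, so that $F_0(s_n)^r=e^{-\hp s_n r(1+o(1))}$ uniformly in $r$. I compare the sum with its integral; since the summand is monotone in $r$, the error is $O(1/r_+)=o(1)$. The integral equals $E_1\bigl((1+\varepsilon)(1+o(1))u_n\bigr)$. By \eqref{eq:E1-small}, or more simply from $E_1(a)-E_1(b)=\log(b/a)+O(b)$ for $0<a<b\to0$, this is $E_1(u_n)-\log(1+\varepsilon)+o(1)$.

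Collecting the four pieces, every error is either $o(1)$ or $O(\varepsilon)$, which gives \eqref{eq:linear-critical}.

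The main obstacle is that $E_1(u_n)\to\infty$, so only additive $o(1)$ errors are allowed. Relative errors in the location of the cutoff are harmless only because $E_1$ behaves like a logarithm near $0$, so that a multiplicative perturbation $(1+\varepsilon)$ of the argument costs only $O(\varepsilon)$ additively. Likewise, the window around $\log n/\hp$ is harmless only because $\sum 1/r$ over a window of relative width $2\varepsilon$ is $O(\varepsilon)$. The delicate point is choosing the Chernoff exponent $\theta=\theta(\varepsilon)$ small enough that the $O(\theta^2)$ corrections are dominated by the gain $\theta\varepsilon$ on both sides of the cutoff.
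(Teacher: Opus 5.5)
Your proof is correct and follows essentially the same route as the paper. Both arguments split the renewal representation around $r\approx\log n/\hp$ into a negligible lower range, a short transition window, and an upper range. The upper range is compared with $\sum_r F_0(s_n)^r/r$ and identified with $E_1(u_n)$ using the logarithmic behaviour of $E_1$ near zero. The only differences are technical: the paper uses a shrinking window $\delta=(\log n)^{-1/4}$ with a Bernstein bound, whereas you use a fixed $\varepsilon$ (sent to zero at the end) with Chernoff bounds at a small fixed exponent $\theta$ and the inequality $1-(1-x)^n\le(nx)^\theta$.
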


\begin{proof}
Put $x=\log n$, $\delta=x^{-1/4}$, and
\begin{equation*}
 r^-:=\left\lfloor\frac{(1-\delta)x}{\hp}\right\rfloor,
 \qquad
 r^+:=\left\lceil\frac{(1+\delta)x}{\hp}\right\rceil.
\end{equation*}
The bounded positive variables $Y_i$ satisfy the standard Bernstein estimate \cite[Chap.~2]{BoucheronLugosiMassart2013}
\begin{equation}\label{eq:bernstein}
 \Paux\{|\Sigma_r-r\hp|>t\}
 \le K\exp\{-c\min(t^2/r,t)\}.
\end{equation}
For $r\le r^-$, split according to
$\Sigma_r\le(1-\delta/2)x$.  On this event,
\begin{equation*}
 (1-e^{-\Sigma_r})^n\le\exp\{-n^{\delta/2}\},
\end{equation*}
and the complementary probability is at most $Ke^{-c\delta^2x}$.  Since $\sum_{r\le r^-}r^{-1}=O(\log x)$, the contribution of $r\le r^-$ is $o(1)$.  The transition window $r^-<r<r^+$ contributes $O(\delta)=o(1)$.

For $r\ge r^+$, let $G_r=\{\Sigma_r\ge(1+\delta/2)x\}$.  On $G_r$,
\begin{equation*}
 0\le1-(1-e^{-\Sigma_r})^n\le n^{-\delta/2}.
\end{equation*}
Moreover, for all sufficiently large $n$ and every $r\ge r^+$,
\begin{equation*}
 r\hp-(1+\delta/2)x\ge c\delta r.
\end{equation*}
Thus \eqref{eq:bernstein} gives
\begin{equation*}
 \Paux(G_r^c)\le Ke^{-c\delta^2r}.
\end{equation*}
Let
\begin{equation*}
 \beta(s):=-\log\Eaux e^{-sY_1}=\hp s+O(s^2).
\end{equation*}
Then the replacement of the smooth cutoff by one on $r\ge r^+$ has error bounded by
\begin{equation*}
 n^{-\delta/2}\sum_{r\ge r^+}\frac{e^{-\beta(s_n)r}}r
 +K\sum_{r\ge r^+}\frac{e^{-c\delta^2r}}r=o(1).
\end{equation*}
The first sum is $O(\log(1/u_n))=O(\log x)$, while the second is
$O(E_1(c\delta^2r^+))=o(1)$.

Finally, monotonicity of $t\mapsto e^{-\beta t}/t$ gives
\begin{equation*}
 \sum_{r\ge r^+}\frac{e^{-\beta(s_n)r}}r
 =E_1(\beta(s_n)r^+)+O((r^+)^{-1}).
\end{equation*}
Since $s_nx=u_n$,
\begin{equation*}
 \beta(s_n)r^+=u_n(1+o(1)).
\end{equation*}
For $|\varepsilon|\le1/2$,
\begin{equation*}
 |E_1(u(1+\varepsilon))-E_1(u)|
 \le\int_{u(1-|\varepsilon|)}^{u(1+|\varepsilon|)}\frac{dt}{t}
 \le K|\varepsilon|,
\end{equation*}
uniformly for $u>0$.  Thus the last display equals $E_1(u_n)+o(1)$.  Together with the lower and transition ranges, this proves \eqref{eq:linear-critical}.
\end{proof}

The nonlinear comparison is conveniently expressed with absolutely summable exponential series.  Let $\mathcal D$ be the additive semigroup generated by $\{-\log p_a:a\in\A\}$, including zero.  For
\begin{equation*}
 f(s)=\sum_{\nu\in\mathcal D}c_\nu e^{-\nu s},
 \qquad
 \|f\|_s:=\sum_{\nu\in\mathcal D}|c_\nu|e^{-\nu s},
\end{equation*}
all equal exponents are grouped.

\begin{lemma}[Exponential-series damping]\label{lem:damping}
For fixed $s\ge0$, the series with finite $\|\cdot\|_s$-norm form a Banach algebra.  In particular, coefficient convolution gives $\|fg\|_s\le\|f\|_s\|g\|_s$.  If $\|g\|_s<1$, then $-\log(1-g)=\sum_{r\ge1}g^r/r$ converges in this norm and
\begin{equation*}
 \|{-\log(1-g)}\|_s\le-\log(1-\|g\|_s).
\end{equation*}
For $k=0$, $|[(I-\mathcal T)^0f](s)|=|f(s)|\le\|f\|_s$.  For every $k\ge1$,
\begin{equation}\label{eq:damping-bound}
 \bigl|[(I-\mathcal T)^kf](s)\bigr|
 \le\sum_\nu|c_\nu|(1-e^{-\nu})^ke^{-\nu s}
 \le\|f\|_s.
\end{equation}
If $s_n\ge0$, $s_n\to0$, and $\|f\|_0<\infty$, then
\begin{equation}\label{eq:damping-limit}
 [(I-\mathcal T)^nf](s_n)\longrightarrow0.
\end{equation}
For a random such series with measurable coefficients, $\mathbb E\|f\|_0^r<\infty$, $r\in\{1,2\}$, implies that the convergence in \eqref{eq:damping-limit} also holds in $L^r$.  If $\mathbb E\|f\|_s<\infty$, coefficientwise conditional expectation is well defined and satisfies
\begin{equation}\label{eq:conditional-norm}
 \bigl\|\mathbb E(f\mid\mathcal G)\bigr\|_s
 \le\mathbb E(\|f\|_s\mid\mathcal G),
\end{equation}
and it commutes with evaluation and with $(I-\mathcal T)^k$.
\end{lemma}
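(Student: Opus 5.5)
We check the clauses of Lemma~\ref{lem:damping} one at a time, in the order stated. All of them come from the fact that the $\|\cdot\|_s$-norm is a weighted $\ell^1$ norm on coefficients indexed by the semigroup $\mathcal D$, with weights $e^{-\nu s}$.

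\emph{Algebra and logarithm.} For $f=\sum c_\nu e^{-\nu s}$ and $g=\sum d_\mu e^{-\mu s}$, the product has coefficients $\sum_{\nu+\mu=\kappa}c_\nu d_\mu$. Since $\mathcal D$ is closed under addition, these are again indexed by $\mathcal D$. Because $e^{-\kappa s}=e^{-\nu s}e^{-\mu s}$, the triangle inequality gives $\|fg\|_s\le\|f\|_s\|g\|_s$. Completeness holds because the space is a weighted $\ell^1$ space. Each exponent $\nu$ has only finitely many decompositions $\nu=\nu_1+\nu_2$, since the generators are bounded below by $\alphaP>0$; so the convolution is a finite sum and is well defined. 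Submultiplicativity then gives $\|g^r\|_s\le\|g\|_s^r$, so $\sum_r g^r/r$ converges absolutely in norm with bound $\sum_r\|g\|_s^r/r=-\log(1-\|g\|_s)$.

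\emph{Damping bound.} The operator $\mathcal T$ acts on coefficients by $c_\nu\mapsto c_\nu e^{-\nu}$. Hence $(I-\mathcal T)^k$ multiplies $c_\nu$ by $(1-e^{-\nu})^k\in[0,1]$, since $\nu\ge0$. Evaluating at $s$ and using the triangle inequality gives \eqref{eq:damping-bound}. The case $k=0$ is trivial.

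\emph{Limit \eqref{eq:damping-limit}.} This is the main point. Since $s_n\ge0$, we have
\begin{equation*}
 |[(I-\mathcal T)^nf](s_n)|\le\sum_\nu|c_\nu|(1-e^{-\nu})^n.
\end{equation*}
This bound does not involve $s_n$, so the hypothesis $s_n\to0$ is not even needed. For $\nu>0$ each term tends to $0$ and is dominated by $|c_\nu|$, which is summable because $\|f\|_0<\infty$; dominated convergence over the countable index set then sends the sum to zero. The term $\nu=0$ must be treated separately. There $(1-e^{0})^n=0$ for $n\ge1$, so that term vanishes identically. (The paper's convention $0^0=1$ only affects $k=0$.)

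\emph{Random version.} Set $Z_n:=\sum_\nu|c_\nu|(1-e^{-\nu})^n$. Pathwise, $Z_n\to0$ and $0\le Z_n\le\|f\|_0$. If $\mathbb E\|f\|_0^r<\infty$, dominated convergence gives $\mathbb E Z_n^r\to0$, which yields the $L^r$ convergence. Measurability holds because $Z_n$ is a countable sum of measurable functions.

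\emph{Conditional expectation.} If $\mathbb E\|f\|_s<\infty$, then $\mathbb E|c_\nu|e^{-\nu s}<\infty$ for every $\nu$, so each $\mathbb E(c_\nu\mid\mathcal G)$ is defined. We have $|\mathbb E(c_\nu\mid\mathcal G)|\le\mathbb E(|c_\nu|\mid\mathcal G)$. Summing with the weights $e^{-\nu s}$ and applying conditional monotone convergence to the countable sum gives \eqref{eq:conditional-norm}.

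\emph{Commutation.} Evaluation at $s$, and $(I-\mathcal T)^k$ followed by evaluation, are linear functionals of the form $\sum_\nu c_\nu w_\nu$ with $|w_\nu|\le e^{-\nu s}$. Interchanging conditional expectation with these countable sums is justified by conditional dominated convergence, since $\sum_\nu|c_\nu|e^{-\nu s}$ is integrable. If one wants the commutation at every point $s'\ge s$ simultaneously, note that $\|\cdot\|_{s'}\le\|\cdot\|_s$, so the same integrability bound covers each such $s'$.
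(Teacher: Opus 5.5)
Your proof is correct and follows essentially the same route as the paper. The paper uses weighted $\ell^1(\mathcal D)$ completeness and local finiteness of $\mathcal D$ for the algebra and logarithm bounds, the diagonal action $(I-\mathcal T)^k e^{-\nu t}=(1-e^{-\nu})^k e^{-\nu t}$ with dominated convergence for the damping limit and its $L^r$ versions, and coefficientwise conditional expectation with conditional Tonelli and truncation for the last clauses, exactly as you do. Your side remark is also right: $s_n\ge0$ already gives a bound uniform in $s_n$, so the hypothesis $s_n\to0$ is not needed for \eqref{eq:damping-limit}.
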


\begin{proof}
The coefficient space is a weighted $\ell^1(\mathcal D)$ space and hence complete.  The semigroup $\mathcal D$ is countable and locally finite, so convolution and Tonelli give the product bound and the logarithmic-series assertion.  For $k\ge1$,
\begin{equation*}
 (I-\mathcal T)^ke^{-\nu t}=(1-e^{-\nu})^ke^{-\nu t}.
\end{equation*}
Thus \eqref{eq:damping-bound} follows, and dominated convergence proves \eqref{eq:damping-limit} and its $L^r$ versions.  Scalar conditional expectations of the countably many coefficients, followed by conditional Tonelli and finite truncation, give \eqref{eq:conditional-norm} and the commutation properties.
\end{proof}

Put
\begin{equation}\label{eq:S-r}
 S_{r,j}:=\sum_{u\in C_j}\muP(u)^r.
\end{equation}

\begin{lemma}[Summable nonlinear scale]\label{lem:S3}
\begin{equation}\label{eq:S3-tail}
 \mathbb E\sum_{j\ge J}S_{3,j}\longrightarrow0
 \qquad(J\to\infty).
\end{equation}
\end{lemma}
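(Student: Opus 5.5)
The plan is to obtain \eqref{eq:S3-tail} from a stronger statement, namely
\begin{equation*}
 \mathbb E\sum_{j\ge0}S_{3,j}<\infty.
\end{equation*}
Once this holds, the tail sums $\sum_{j\ge J}S_{3,j}$ are nonnegative, decrease to $0$ pathwise, and are dominated by an integrable variable, so dominated convergence gives \eqref{eq:S3-tail}. To prove finiteness I will use $S_{2,j}$ as a Lyapunov function under $\Pideal$. The aim is to show that every ideal step lowers $S_{2}$ by at least a fixed multiple of $\muP(P_{j+1})^2$. In conditional mean, that decrement is a fixed multiple of $S_{3,j}$.

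First I compute the exact one-step change of $S_2$ under one augmentation $C\mapsto C[u,k]$, with $q=\muP(u)$ and $S=S_2(C)$. By the unique representation in Lemma~\ref{lem:augmentation},
\begin{equation*}
 S_2(C[u,k])=(S-q^2)\bigl(1+q^2+\cdots+q^{2k}\bigr)+q^{2k+2}.
\end{equation*}
(This is \eqref{eq:F-product} at $s=1$ for a single step.) Writing $1+q^2+\cdots+q^{2k}=(1-q^{2k+2})/(1-q^2)$ and simplifying should give the closed form
\begin{equation*}
 S_2(C[u,k])-S=-\frac{q^2(1-q^{2k})}{1-q^2}\,(1-S).
\end{equation*}
The algebra is routine; I will double-check it on $k=1$. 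In particular $S_{2,j}$ is pathwise nonincreasing. This also follows from \eqref{eq:F-product} at $s=1$, since each $A_j(1)\ge1$. Hence $S_{2,j}\le S_{2,0}=\sum_a p_a^2\le\rhoP$.

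Next I bound the decrement uniformly. Since $k\ge1$, we have $1-q^{2k}\ge1-q^2$. Combined with $1-S_{2,j}\ge1-\rhoP$, this gives
\begin{equation*}
 S_{2,j+1}-S_{2,j}\le-(1-\rhoP)\,\muP(P_{j+1})^2
\end{equation*}
pathwise, whatever the copy factor $K_{j+1}$ is. Taking conditional expectations with \eqref{eq:ideal-prefix} yields
\begin{equation*}
 \mathbb E\bigl[S_{2,j+1}-S_{2,j}\mid\F_j\bigr]\le-(1-\rhoP)\sum_{u\in C_j}q_u\,q_u^2=-(1-\rhoP)S_{3,j}.
\end{equation*}
Summing over $0\le j<n$, taking expectations, and using $S_{2,n}\ge0$ gives
\begin{equation*}
 (1-\rhoP)\,\mathbb E\sum_{j<n}S_{3,j}\le S_{2,0}\le1.
\end{equation*}
Monotone convergence as $n\to\infty$ then gives the stronger statement.

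The only delicate point is the exact cancellation in the first computation. A cruder expansion produces error terms of size $O(S_2S_3)$, and these are not obviously dominated when $S_2$ is not small. The closed form removes this issue, because the factor $1-S_2$ is bounded below by $1-\rhoP$ from the start. If the algebra came out differently, my fallback would be to run the same Lyapunov argument only after the first time $S_{2,j}$ falls below a small threshold. Monotonicity keeps $S_{2,j}$ below the threshold from then on. The difficulty of that route is controlling the expected contribution of the $S_{3,j}$ before that time.
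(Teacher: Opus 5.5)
Your proposal is correct and is essentially the paper's argument. You use the same exact one-step identity $S_{2,j+1}=S_{2,j}-(1-S_{2,j})\sum_{a=1}^{K_{j+1}}\muP(P_{j+1})^{2a}$ (from \eqref{eq:F-product} at $s=1$) to get the drift $\mathbb E(S_{2,j+1}\mid\F_j)\le S_{2,j}-cS_{3,j}$; you obtain it pathwise from $\sum_{a=1}^{k}q^{2a}\ge q^2$, where the paper computes the conditional mean of that sum exactly. Your finish is slightly more direct: you bound the whole series by $S_{2,0}/(1-\rhoP)$ and take tails, whereas the paper additionally proves $\mathbb E S_{2,J}\to0$ via Cauchy--Schwarz and bounds the tail by $\mathbb E S_{2,J}/c$.
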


\begin{proof}
At step $j+1$, after selecting $u$ with weight $q$, write $k:=K_{j+1}$ and let
\begin{equation*}
 \Delta_{j+1}:=\sum_{a=1}^{k}q^{2a}.
\end{equation*}
The product identity \eqref{eq:F-product} at $s=1$ gives
\begin{equation*}
 S_{2,j+1}=S_{2,j}-(1-S_{2,j})\Delta_{j+1}.
\end{equation*}
Since $S_{2,j}\le S_{2,0}<1$ and
\begin{equation*}
 \mathbb E(\Delta_{j+1}\mid C_j)
 =\sum_{u\in C_j}\frac{q_u^3}{1-q_u^3}\ge S_{3,j},
\end{equation*}
one obtains
\begin{equation}\label{eq:S2-drift}
 \mathbb E(S_{2,j+1}\mid C_j)
 \le S_{2,j}-c_{\mathbf p}S_{3,j}.
\end{equation}
Also $S_{2,j}^2\le S_{3,j}$ by Cauchy--Schwarz and $\sum q_u=1$.  Hence
$\mathbb E S_{3,j}\ge\mathbb E(S_{2,j}^2)\ge(\mathbb E S_{2,j})^2$.  With $a_j:=\mathbb E S_{2,j}$, \eqref{eq:S2-drift} therefore gives
\begin{equation*}
 a_{j+1}\le a_j-c_{\mathbf p}a_j^2,
\end{equation*}
so $a_j\to0$.  Summing \eqref{eq:S2-drift} from $J$ to $n$ and then letting $n\to\infty$ gives
\begin{equation*}
 c_{\mathbf p}\mathbb E\sum_{j\ge J}S_{3,j}
 \le a_J,
\end{equation*}
which proves \eqref{eq:S3-tail}.
\end{proof}

Let
\begin{equation*}
 \xi_{j+1}(s):=\log A_{j+1}(s),
 \qquad
 \varepsilon_{j+1}(s):=\xi_{j+1}(s)-\Theta_j(s+1).
\end{equation*}
Since $\Theta_{j+1}(s)=\Theta_j(s)-\xi_{j+1}(s)$, iteration gives
\begin{equation}\label{eq:rest-decomposition-new}
 \Theta_n(s)-\Lambda_n(s)
 =-\sum_{j=0}^{n-1}[(I-\mathcal T)^{n-1-j}\varepsilon_{j+1}](s).
\end{equation}

\begin{lemma}[Local nonlinear error]\label{lem:local-error-new}
With coefficientwise conditional expectation, write
\begin{equation*}
 b_j:=\mathbb E(\xi_{j+1}\mid\F_j)-\mathcal T\Theta_j,
 \qquad
 \zeta_{j+1}:=\xi_{j+1}-\mathbb E(\xi_{j+1}\mid\F_j).
\end{equation*}
Then, for every $s\ge0$,
\begin{equation}\label{eq:local-bounds-new}
 \|b_j\|_s\le KS_{3,j},
 \qquad
 \mathbb E(\|\zeta_{j+1}\|_s^2\mid\F_j)\le KS_{3,j}.
\end{equation}
\end{lemma}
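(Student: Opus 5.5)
The plan is to compute $\xi_{j+1}$ explicitly as an exponential series, take the conditional expectation over $(P_{j+1},K_{j+1})$ coefficientwise under the ideal kernel \eqref{eq:ideal-joint}, and match the result term by term against $\mathcal T\Theta_j$. All the errors will be of cubic order in the codeword weights.

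\emph{Step 1: explicit series for $\xi_{j+1}$.} With $u=P_{j+1}$, $q=q_u$ and $k=K_{j+1}$, the geometric sum gives
\begin{equation*}
 \xi_{j+1}(s)=\log\frac{1-q^{(k+1)(1+s)}}{1-q^{1+s}}
 =\sum_{r\ge1}\frac{q^{r(1+s)}}r-\sum_{r\ge1}\frac{q^{r(k+1)(1+s)}}r .
\end{equation*}
Each term $q^{a+bs}=e^{-a\lambda(u)}e^{-b\lambda(u)s}$ has an exponent in $\mathcal D$. Norms are computed on absolute values of coefficients and are nonincreasing in $s\ge0$, so every bound below may be evaluated at $s=0$. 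This yields the pathwise estimate
\begin{equation*}
 \|\xi_{j+1}\|_s\le-\log(1-q)-\log(1-q^{k+1})\le K q,
\end{equation*}
uniformly in $k\ge1$, because $q\le\rhoP<1$.

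\emph{Step 2: the bias $b_j$.} Condition on $\F_j$. The prefix $u$ is chosen with probability $q_u$, and $K_{j+1}$ is then geometric. The first series has conditional expectation
\begin{equation*}
 \sum_{r\ge1}\frac1r\sum_{u\in C_j}q_u^{1+r(1+s)}.
\end{equation*}
Its $r=1$ term is $\sum_u q_u^{2+s}=F_j(s+1)$, which is exactly the $r=1$ term of
\begin{equation*}
 \mathcal T\Theta_j(s)=\sum_{r\ge1}\frac{F_j(s+1)^r}{r}.
\end{equation*}
The remaining pieces are bounded as follows.
\begin{itemize}
\item The first series for $r\ge2$ has norm at most $\sum_{r\ge2}\sum_u q_u^{1+r}/r\le K S_{3,j}$.
\item The terms of $\mathcal T\Theta_j$ with $r\ge2$ have norm at most $\sum_{r\ge2}S_{2,j}^r/r\le K S_{2,j}^2\le K S_{3,j}$. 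Here I use $S_{2,j}\le S_{2,0}<1$ together with Cauchy--Schwarz, as in Lemma~\ref{lem:S3}.
\item The negative series has pathwise norm at most $Kq^{k+1}$. Its conditional expectation is therefore at most $\sum_u q_u\sum_{k\ge1}(1-q_u)q_u^{k-1}Kq_u^{k+1}\le K S_{3,j}$.
\end{itemize}
By \eqref{eq:conditional-norm} and the triangle inequality, the norm of a conditional expectation is bounded by the conditional expectation of the norm. Hence $\|b_j\|_s\le KS_{3,j}$.

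\emph{Step 3: the fluctuation $\zeta_{j+1}$.} Using $\|\zeta_{j+1}\|_s\le\|\xi_{j+1}\|_s+\|\mathbb E(\xi_{j+1}\mid\F_j)\|_s$, then \eqref{eq:conditional-norm} and conditional Jensen, I get
\begin{equation*}
 \mathbb E(\|\zeta_{j+1}\|_s^2\mid\F_j)\le4\,\mathbb E(\|\xi_{j+1}\|_s^2\mid\F_j).
\end{equation*}
By Step 1, $\|\xi_{j+1}\|_s^2\le K q^2$. Averaging over $u$ with weights $q_u$ gives at most $K\sum_u q_u^3=KS_{3,j}$.

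\emph{Main obstacle.} The delicate point is the bookkeeping that justifies these manipulations. Coefficientwise conditional expectation must commute with the infinite sums, which follows from Lemma~\ref{lem:damping} since all norms are finite because $q\le\rhoP$. One must also check that the $r=1$ cancellation is exact as exponential series in $s$, not merely at a single value of $s$. It is: both sides equal $\sum_u e^{-\lambda(u)(2+s)}$. All the other estimates are crude geometric bounds that are uniform because $q_u\le\rhoP<1$.
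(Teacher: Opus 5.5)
Your proposal is correct and follows essentially the same route as the paper. You expand $\xi_{j+1}$ as two logarithmic series and match the averaged $r=1$ term with $F_j(\cdot+1)$. You then bound the remaining pieces, namely $\mathcal T\Theta_j-F_j(\cdot+1)$ via $S_{2,j}^2\le S_{3,j}$ and the squared pathwise norm $\|\xi_{j+1}\|_s^2\le Kq^2$, by $KS_{3,j}$. The only differences are cosmetic: you bound the copy-factor series pathwise by $Kq^{k+1}$ before averaging over the geometric law instead of computing $\mathbb E[x^{r(k+1)}\mid u]$ exactly, and you make the passage to the centered $\zeta_{j+1}$ explicit with the factor $4$.
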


\begin{proof}
Condition on $C_j$ and on selecting $u$, and write $q=\muP(u)$, $k:=K_{j+1}$, and $x(t)=q^{1+t}$.  Since $q\le\rhoP<1$,
\begin{equation}\label{eq:xi-expansion-new}
 \xi_{j+1}(t)
 =\sum_{r\ge1}\frac{x(t)^r}{r}
 -\sum_{r\ge1}\frac{x(t)^{r(k+1)}}r
\end{equation}
converges absolutely in the series norm.  Conditional on $u$,
\begin{equation*}
 \mathbb E[x(t)^{r(k+1)}\mid u]
 =\frac{(1-q)x(t)^{2r}}{1-qx(t)^r}.
\end{equation*}
After averaging over the selected prefix, the $r=1$ term of the first sum is $F_j(t+1)$.  Its remaining terms satisfy
\begin{equation*}
 \sum_{u\in C_j}q_u\sum_{r\ge2}\frac{q_u^{r(1+s)}}r
 \le K\sum_{u\in C_j}q_u^3=KS_{3,j}.
\end{equation*}
For the second sum, expand the denominator geometrically.  Since
$q_u^{1+r(1+s)}\le\rhoP^2$, uniformly in $r\ge1$ and $s\ge0$,
\begin{align*}
 &\sum_{u\in C_j}q_u(1-q_u)\sum_{r\ge1}
 \frac{q_u^{2r(1+s)}}{r(1-q_u^{1+r(1+s)})}\\
 &\hspace{35mm}\le K\sum_{u\in C_j}q_u^3=KS_{3,j}.
\end{align*}
Furthermore $\|F_j(\cdot+1)\|_s=\sum_uq_u^{2+s}\le S_{2,j}\le\rhoP<1$.  The logarithmic series and Cauchy--Schwarz therefore give
\begin{align*}
 \|\Theta_j(\cdot+1)-F_j(\cdot+1)\|_s
 &\le K\left(\sum_uq_u^{2+s}\right)^2\\
 &\le K\sum_uq_u^{3+2s}\le KS_{3,j}.
\end{align*}
These estimates prove the bias bound.  Finally, \eqref{eq:xi-expansion-new} gives $\|\xi_{j+1}\|_s\le Kq^{1+s}$ uniformly in the copy factor.  Averaging its square with selection probability $q$ yields $K\sum_uq_u^{3+2s}\le KS_{3,j}$; conditional Jensen gives the same bound for the centered variable $\zeta_{j+1}$.
\end{proof}

\begin{lemma}[Damped nonlinear remainder]\label{lem:nonlinear-remainder}
For every deterministic $t_n>0$ with $t_n\to0$,
\begin{equation}\label{eq:nonlinear-remainder}
 \Theta_n(t_n)-\Lambda_n(t_n)
 \xrightarrow[n\to\infty]{\Pideal}0.
\end{equation}
\end{lemma}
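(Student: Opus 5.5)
Starting from the decomposition \eqref{eq:rest-decomposition-new}, I will split each local error into a predictable bias and a martingale increment. By definition, $\varepsilon_{j+1}=b_j+\zeta_{j+1}$. Hence
\begin{equation*}
 \Theta_n(t_n)-\Lambda_n(t_n)
 =-\sum_{j=0}^{n-1}[(I-\mathcal T)^{n-1-j}b_j](t_n)
  -\sum_{j=0}^{n-1}[(I-\mathcal T)^{n-1-j}\zeta_{j+1}](t_n).
\end{equation*}
I will show that each sum tends to zero in $L^1(\Pideal)$ (the bias sum) or in $L^2(\Pideal)$ (the martingale sum). Both sums will be cut at an index $J$, to be chosen large but fixed, and then $J\to\infty$ at the end.

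For the bias sum with $j\ge J$, I will use \eqref{eq:damping-bound} together with Lemma~\ref{lem:local-error-new}. These give $|[(I-\mathcal T)^{n-1-j}b_j](t_n)|\le\|b_j\|_{t_n}\le KS_{3,j}$, so this part has expectation at most $K\,\mathbb E\sum_{j\ge J}S_{3,j}$, and that is small uniformly in $n$ by Lemma~\ref{lem:S3}. The finitely many terms with $j<J$ are handled by the damping limit \eqref{eq:damping-limit}. For fixed $j$, the exponent $n-1-j\to\infty$ and $t_n\to0$. The series $b_j$ satisfies $\|b_j\|_0\le KS_{3,j}\le K$, so its norm is bounded and the $L^1$ version of \eqref{eq:damping-limit} applies. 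One caveat: \eqref{eq:damping-limit} is stated for the exponent equal to the index of $s_n$. To cover exponent $n-1-j$, I will reindex by setting $n'=n-1-j$ and $s'_{n'}=t_{n'+1+j}$, which is again a null sequence. I also need $\nu\ne0$ on the support, so that $(1-e^{-\nu})^k\to0$. The $\nu=0$ coefficient vanishes because every term involves positive powers of $q$.

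For the martingale sum, the terms $Z_{j+1}:=[(I-\mathcal T)^{n-1-j}\zeta_{j+1}](t_n)$ are martingale differences with respect to $\F_{j+1}$. This uses the fact that conditional expectation commutes with evaluation and with $(I-\mathcal T)^k$, by Lemma~\ref{lem:damping}. Orthogonality of the increments gives
\begin{equation*}
 \mathbb E\Bigl|\sum_{j\ge J}Z_{j+1}\Bigr|^2
 =\sum_{j\ge J}\mathbb E Z_{j+1}^2
 \le\sum_{j\ge J}\mathbb E\|\zeta_{j+1}\|_{t_n}^2
 \le K\,\mathbb E\sum_{j\ge J}S_{3,j},
\end{equation*}
which is again small uniformly in $n$ by Lemma~\ref{lem:S3}. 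For each fixed $j<J$, the $L^2$ version of \eqref{eq:damping-limit} applies, because $\mathbb E\|\zeta_{j+1}\|_0^2\le K$. Combining the two parts, the $\limsup_n$ of the $L^1$ norm of the whole difference is bounded by $K\bigl(\mathbb E\sum_{j\ge J}S_{3,j}\bigr)^{1/2}$ plus a linear term of the same kind. Letting $J\to\infty$ gives convergence in $L^1$, and hence convergence in probability.

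I expect the main obstacle to be justifying the interchange steps. First, the right side of \eqref{eq:rest-decomposition-new} must be a legitimate identity of norm-convergent series at $s=t_n$. This requires $\|\Theta_j\|_{s}<\infty$, which follows from $\|F_j-1\|$-type bounds and the logarithmic-series clause of Lemma~\ref{lem:damping} applied to $g=F_j$ with $\|F_j\|_s<1$ for $s>0$. Second, the $\nu=0$ and small-$\nu$ exponents must not defeat the damping. I would first verify that every exponent appearing in $b_j$ and $\zeta_{j+1}$ is at least $2\alphaP$. That suffices, because then $(1-e^{-\nu})^k\le1$ uniformly and the damping tends to zero coefficientwise.
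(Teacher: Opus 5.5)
Your proposal is correct and is essentially the paper's own argument: the same bias/martingale split of \eqref{eq:rest-decomposition-new}, the $L^1$/$L^2$ damping limit of Lemma~\ref{lem:damping} for the finitely many head terms, Lemma~\ref{lem:local-error-new} plus martingale orthogonality for the tails, and Lemma~\ref{lem:S3}, with $n\to\infty$ taken before $J\to\infty$. Your side worries are unnecessary, and one of the checks you propose is false but never used. A $\nu=0$ coefficient is annihilated by $(I-\mathcal T)^k$ for every $k\ge1$, and the damping lemma needs only $\mathbb E\|f\|_0^r<\infty$; however, not every exponent of $\zeta_{j+1}$ is at least $2\alphaP$, since the linear term $q_u^{1+t}$ has exponent $\lambda(u)$, which equals $\alphaP$ for the most likely letter at $j=0$.
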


\begin{proof}
Insert $\varepsilon_{j+1}=b_j+\zeta_{j+1}$ in \eqref{eq:rest-decomposition-new}.  Fix $J$.  For each $j<J$, Lemma~\ref{lem:damping} makes the damped term tend to zero in $L^1$ for $b_j$ and in $L^2$ for $\zeta_{j+1}$.  For the predictable tail, Lemmas~\ref{lem:damping} and~\ref{lem:local-error-new} give
\begin{equation*}
 \mathbb E\left|
 \sum_{j=J}^{n-1}[(I-\mathcal T)^{n-1-j}b_j](t_n)
 \right|
 \le K\mathbb E\sum_{j\ge J}S_{3,j}.
\end{equation*}
Set $Z_{j,n}=[(I-\mathcal T)^{n-1-j}\zeta_{j+1}](t_n)$.  Then $Z_{j,n}$ is $\F_{j+1}$-measurable and, because coefficientwise conditional expectation commutes with the damping operator,
$\mathbb E(Z_{j,n}\mid\F_j)=0$.  If $i<j$, then $Z_{i,n}$ is $\F_j$-measurable, so $\mathbb E(Z_{i,n}Z_{j,n})=0$.  Thus the terms are pairwise orthogonal and
\begin{equation*}
 \mathbb E\left|
 \sum_{j=J}^{n-1}Z_{j,n}
 \right|^2
 \le K\mathbb E\sum_{j\ge J}S_{3,j}.
\end{equation*}
First let $n\to\infty$ for the finite head and then $J\to\infty$, using Lemma~\ref{lem:S3}.
\end{proof}

\begin{proof}[Proof of Theorem~\ref{thm:critical}]
Apply Lemma~\ref{lem:nonlinear-remainder} with $t_n=s_n$ and combine it with Lemma~\ref{lem:linear-critical}.
\end{proof}

\section{Sampling fluctuation}\label{app:sampling}

\begin{lemma}[Discrete Lenglart inequality]\label{lem:lenglart}
Let $(M_k)_{k=0}^m$ be a square-integrable martingale adapted to $(\mathcal G_k)_{k=0}^m$, with $M_0=0$, and set
\begin{equation*}
 V_k:=\sum_{j=1}^k
 \mathbb E[(M_j-M_{j-1})^2\mid\mathcal G_{j-1}].
\end{equation*}
Then, for $a,b>0$,
\begin{equation}\label{eq:lenglart}
 \mathbb P\left\{\max_{k\le m}|M_k|>a\right\}
 \le\frac b{a^2}+\mathbb P\{V_m>b\}.
\end{equation}
\end{lemma}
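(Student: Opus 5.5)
The plan is to prove the discrete Lenglart inequality by localizing the martingale at the first time its predictable quadratic variation would exceed $b$, and then applying Doob's $L^2$ maximal inequality to the stopped martingale. The key structural fact is that $V_k$ is predictable: $V_k$ is $\mathcal G_{k-1}$-measurable, because it is a sum of conditional expectations given $\mathcal G_{j-1}$ for $j\le k$.

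Define
\begin{equation*}
 \tau:=\min\{k\in\{0,\ldots,m-1\}: V_{k+1}>b\},
\end{equation*}
with $\tau:=m$ if no such $k$ exists. Since $\{\tau\le k\}=\bigcup_{i\le k}\{V_{i+1}>b\}$ and $V_{i+1}$ is $\mathcal G_i$-measurable, $\tau$ is a stopping time. By construction $V_\tau\le b$, because $V$ is nondecreasing and $V_{\tau}\le b$ on $\{\tau<m\}$ (or $\tau=0$, where $V_0=0$), while on $\{\tau=m\}$ we have $V_m\le b$. Moreover $\{V_m>b\}^c\subset\{\tau=m\}$.

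Next take the stopped martingale $M^\tau_k=M_{k\wedge\tau}$, which is square integrable with increments $\one_{\{\tau\ge j\}}(M_j-M_{j-1})$. Since $\{\tau\ge j\}\in\mathcal G_{j-1}$, orthogonality of martingale increments gives
\begin{equation*}
 \mathbb E (M^\tau_m)^2=\mathbb E\sum_{j=1}^m\one_{\{\tau\ge j\}}\mathbb E[(M_j-M_{j-1})^2\mid\mathcal G_{j-1}]=\mathbb E V_{\tau}\le b.
\end{equation*}
Doob's maximal inequality (Chebyshev form $\mathbb P\{\max|M^\tau_k|>a\}\le a^{-2}\mathbb E(M^\tau_m)^2$) then bounds $\mathbb P\{\max_{k\le m}|M^\tau_k|>a\}$ by $b/a^2$.

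Finally split on $\{\tau=m\}$: there $M^\tau_k=M_k$ for all $k\le m$. Hence
\begin{equation*}
 \mathbb P\Bigl\{\max_{k\le m}|M_k|>a\Bigr\}\le\mathbb P\Bigl\{\max_{k\le m}|M^\tau_k|>a\Bigr\}+\mathbb P\{\tau<m\}\le\frac b{a^2}+\mathbb P\{V_m>b\},
\end{equation*}
since $\{\tau<m\}=\{V_m>b\}$. The only delicate point, and the one I would check carefully, is the indexing of $\tau$ so that it is a genuine stopping time while still guaranteeing $V_\tau\le b$. This works precisely because of predictability, which lets us stop one step before the quadratic variation overshoots $b$. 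Everything else is routine.
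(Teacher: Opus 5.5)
Your proof is correct and takes essentially the paper's route: stop at the predictable time $\sigma=\inf\{k<m:V_{k+1}>b\}$, use $\mathbb E M_{\sigma}^2=\mathbb E V_{\sigma}\le b$, and add $\mathbb P\{V_m>b\}$. The paper differs only in also stopping at the first time $|M_k|>a$ and applying Markov to $M_\theta^2$ (in effect reproving the Kolmogorov--Doob bound you quote), and your justification of $V_\tau\le b$ should cite minimality of $\tau$, since as written it restates the claim.
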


\begin{proof}
Let
\begin{align*}
 \tau&:=\inf\{k\le m:|M_k|>a\},\\
 \sigma&:=\inf\{0\le k<m:V_{k+1}>b\},
\end{align*}
with the infimum of an empty set equal to $m$, and put $\theta=\tau\wedge\sigma\wedge m$.  Predictability of $V$ makes $\sigma$ a stopping time, and $V_\theta\le b$.  Since $M_k^2-V_k$ is a martingale, optional stopping gives $\mathbb E M_\theta^2=\mathbb E V_\theta\le b$.  On $\{\max_{k\le m}|M_k|>a,\ V_m\le b\}$ one has $|M_\theta|>a$.  Markov's inequality therefore gives probability at most $b/a^2$.  Adding $\mathbb P\{V_m>b\}$ proves \eqref{eq:lenglart}.
\end{proof}

\begin{proof}[Proof of Lemma~\ref{lem:sampling}]
Put
\begin{equation*}
 \mathcal M_m:=\sum_{j=1}^m(|P_j|-\lbar_{j-1}),
 \qquad
 J_n:=\sum_{u\in C_n}\muP(u)|u|^2.
\end{equation*}
The process $(\mathcal M_m)_{m\ge0}$ is a square-integrable martingale.  From \eqref{eq:M-product}, writing
$M_n(z)-1=(z-1)\mathcal Q_n(z)$ with $\mathcal Q_n(z):=\prod_{i\le n}Q_i(z)$, one obtains
\begin{equation*}
 J_n=\lbar_n\left(1+2\sum_{i=1}^n\frac{Q_i'(1)}{Q_i(1)}\right).
\end{equation*}
Furthermore,
\begin{equation*}
 \frac{Q_i'(1)}{Q_i(1)}
 \le |P_i|\sum_{a\ge1}a q_i^a
 \le K|P_i|q_i.
\end{equation*}
For $n\ge2$, the selected prefixes are distinct, so Lemma~\ref{lem:weight-envelope} gives
\begin{equation*}
 \sum_{i=1}^n|P_i|q_i\le K(\log(n+2))^2.
\end{equation*}
Together with the preceding display, and after enlarging $K$ for $n=0,1$, this yields
\begin{equation}\label{eq:J-bound-new}
 J_n\le\lbar_n\bigl(1+K(\log(n+2))^2\bigr),
 \qquad n\ge0.
\end{equation}
Let $V_m$ be the predictable quadratic variation of $\mathcal M_m$.  Since each conditional variance is at most $J_{j-1}$, \eqref{eq:J-bound-new} and \eqref{eq:sum-lbar} give
\begin{align*}
 V_m
 &\le K(\log(m+1))^2\sum_{j=1}^m\lbar_{j-1}\\
 &=O_{\Pideal}(m(\log m)^3)
 =o_{\Pideal}(m^2(\log m)^2).
\end{align*}
Apply Lemma~\ref{lem:lenglart} with $a=\varepsilon m\log m$ and
$b=A m(\log m)^3$, then let first $m\to\infty$ and subsequently $A\to\infty$.
\end{proof}

\section{Uniform finite-block renewal}\label{app:renewal}

Recall $M_C(z)=\sum_{w\in C}\muP(w)z^{|w|}$.  Choose any valid history generating $C\in\mathcal C_N$, with selected lengths $\ell_i$, weights $q_i$, and copy factors $k_i$.  Then \eqref{eq:M-product} gives
\begin{equation}\label{eq:code-factor}
\begin{aligned}
 M_C(z)-1&=(z-1)\mathcal Q_C(z),\\
 \mathcal Q_C(z)&:=\prod_i
 \bigl(1+q_iz^{\ell_i}+\cdots+(q_iz^{\ell_i})^{k_i}\bigr).
\end{aligned}
\end{equation}
Moreover, $\mathcal Q_C(1)=\lbar_C$.  Although the product is written through a history, $\mathcal Q_C=(M_C-1)/(z-1)$ depends only on the final code; any generating history may therefore be used for uniform bounds.

\begin{lemma}[Context generating function]\label{lem:context-gf-new}
Let
\begin{equation*}
 \phi_f(z):=
 \begin{cases}
  \muP(f)z^{|f|},&f\in C,\\
  0,&f=\varnothing.
 \end{cases}
\end{equation*}
Then
\begin{equation}\label{eq:context-gf-new}
 \sum_{L\ge0}B(C,f,L)z^L
 =\frac{1-\phi_f(z)}{1-M_C(z)}.
\end{equation}
\end{lemma}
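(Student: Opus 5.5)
We plan to prove \eqref{eq:context-gf-new} by a combinatorial decomposition of $\Wcal(C,f,L)$ according to the final codeword, working throughout with formal power series in $z$. Since $C$ is a finite complete prefix code, every concatenation of $C$-codewords factors uniquely. Hence the unrestricted generating function is
\begin{equation*}
 \sum_{L\ge0}B(C,\varnothing,L)z^L=\sum_{r\ge0}M_C(z)^r=\frac1{1-M_C(z)},
\end{equation*}
because the $\muP$-weight of a concatenation $v_1\cdots v_r$ is $\prod_i\muP(v_i)$ and its length is $\sum_i|v_i|$. Since $M_C(0)=0$, the geometric series is a valid formal inverse. This settles the case $f=\varnothing$, where $\phi_\varnothing=0$.

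For $f\in C$, we partition all $C$-concatenations into two classes: those that are empty or do not end in $f$, which are exactly the words counted by $B(C,f,\cdot)$, and those that end in $f$. Unique factorization gives a weight- and length-compatible bijection between words ending in $f$ and pairs consisting of an arbitrary concatenation $y$ and the appended codeword $f$. The generating function of words ending in $f$ is therefore $\phi_f(z)/(1-M_C(z))$. Subtracting this from the total yields
\begin{equation*}
 \sum_{L}B(C,f,L)z^L=\frac{1-\phi_f(z)}{1-M_C(z)},
\end{equation*}
which includes the convention $B(C,f,0)=1$, since the empty word is admitted.

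The only delicate point is the claim that ``ending in $f$'' refers to the last factor of the unique $C$-factorization. Uniqueness of that factorization, established in Lemma~\ref{lem:augmentation}, makes the bijection well defined. As a consistency check, one can also verify the formula against the recursion of Lemma~\ref{lem:boundary-recursion}, although this is not needed. The convergence of $1/(1-M_C(z))$ as an analytic function for $|z|<1$ also follows from $M_C(1)=1$ together with $|M_C(z)|<1$ on $|z|<1$. We will mention this because later renewal estimates will use the formula analytically, but the identity itself is formal.
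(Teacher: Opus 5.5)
Your proposal is correct and follows the paper's proof: unique factorization over the prefix code gives $(1-M_C(z))^{-1}$ for all $C$-sequences, sequences whose last factor is $f$ contribute $\phi_f(z)/(1-M_C(z))$, and subtracting gives the claim, with nothing subtracted when $f=\varnothing$. One minor correction: unique factorization is a general property of prefix codes, not something Lemma~\ref{lem:augmentation} establishes; that lemma itself relies on it.
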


\begin{proof}
Unique factorization over the prefix code $C$ makes the generating function of all finite $C$-sequences equal to $(1-M_C(z))^{-1}$.  Sequences ending in $f$ have generating function $\phi_f(z)/(1-M_C(z))$.  Subtraction proves the claim; for $f=\varnothing$ there is no subtraction.
\end{proof}

\begin{lemma}[Uniform zero-free control]\label{lem:zero-free-new}
Put $a_0:=\rhoP\mathfrak r<1$.  For $|z|=\mathfrak r$,
\begin{equation}\label{eq:Q-inverse-new}
 |\mathcal Q_C(z)|^{-1}\le
 \exp\left(K\sum_i a_0^{\ell_i}\right).
\end{equation}
Moreover, uniformly over histories with $\sum_i k_i\ell_i\le N-1$,
\begin{equation}\label{eq:a-sum-new}
 \sum_i a_0^{\ell_i}=O\left(\frac{N^\eta}{\log N}\right),
 \qquad
 \eta=\log_d(da_0).
\end{equation}
In particular, $\mathcal Q_C$ has no zero in $|z|<\rhoP^{-1}$.
\end{lemma}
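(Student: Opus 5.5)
We will prove \eqref{eq:Q-inverse-new} factor by factor, then prove the counting bound \eqref{eq:a-sum-new} by splitting at a length threshold.

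\emph{Step 1: a single factor.} Fix $|z|=\mathfrak r$ and write $x_i=q_iz^{\ell_i}$. Since $q_i\le\rhoP^{\ell_i}$, we have $|x_i|\le a_0^{\ell_i}\le a_0<1$. The $i$th factor is a finite geometric sum, so
\begin{equation*}
 1+x_i+\cdots+x_i^{k_i}=\frac{1-x_i^{k_i+1}}{1-x_i}.
\end{equation*}
Its modulus is at least $(1-|x_i|^2)/(1+|x_i|)=1-|x_i|$, using $k_i\ge1$. Because $|x_i|\le a_0<1$, there is a constant $K$ depending only on $a_0$ such that $-\log(1-t)\le Kt$ for $0\le t\le a_0$. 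Taking logarithms and summing over $i$ gives
\begin{equation*}
 |\mathcal Q_C(z)|^{-1}\le\exp\Bigl(K\sum_i|x_i|\Bigr)\le\exp\Bigl(K\sum_i a_0^{\ell_i}\Bigr),
\end{equation*}
which is \eqref{eq:Q-inverse-new}.

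\emph{Step 2: no zeros.} The same lower bound $1-|x_i|$ holds for every $|z|<\rhoP^{-1}$, since then $|x_i|\le(\rhoP|z|)^{\ell_i}<1$. So each factor is nonzero. The product is finite, so $\mathcal Q_C$ has no zero in $|z|<\rhoP^{-1}$. We use a finite product because $\mathcal Q_C$ was defined through a finite generating history.

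\emph{Step 3: the counting estimate \eqref{eq:a-sum-new}.} With $a_n=\#\{i:\ell_i=n\}$ we have
\begin{equation*}
 \sum_i a_0^{\ell_i}=\sum_n a_n a_0^n .
\end{equation*}
Let $J$ be the integer near $\eta'\log_dN$, where the exponent $\eta'$ is chosen below.

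\emph{Short lengths $n\le J$.} By Lemma~\ref{lem:prefix-count}, $a_na_0^n\le(da_0)^n/n+O(d^{n/2}a_0^n)$. Since $da_0=d^{\eta}>1$, the sum $\sum_{n\le J}(da_0)^n/n$ is dominated by its last terms and is $O\bigl((da_0)^J/J\bigr)=O\bigl(N^{\eta\eta'}/\log N\bigr)$. The error part is $O(d^{J/2})$, which is negligible.

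\emph{Long lengths $n>J$.} The budget gives $\sum_{\ell_i>J}k_i\ell_i\le N$, so the number of such $i$ is at most $N/J$. Each term is at most $a_0^{J}$. The contribution is therefore $O\bigl((N/\log N)\,N^{\eta'\log_d a_0}\bigr)$.

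\emph{Choice of $\eta'$.} Write $\log_d a_0=\eta-1$. The long part is then $O(N^{1+\eta'(\eta-1)}/\log N)$ and the short part is $O(N^{\eta\eta'}/\log N)$. These exponents coincide exactly when $\eta'=1$. So take $J=\lfloor\log_dN\rfloor$ (adjusted by a constant if needed). Both parts are then $O(N^{\eta}/\log N)$, which proves \eqref{eq:a-sum-new}.

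The main obstacle is Step 3, specifically balancing the threshold so that the short part carries the $1/\log N$ gain rather than only the long part. This relies on the $d^n/n$ form in Lemma~\ref{lem:prefix-count}. The error term there, $O(d^{n/2})$ summed up to $J=\log_dN$, is $O(\sqrt N\,a_0^{\text{small}})$. It is harmless only when $\eta>1/2$, or after absorbing it using $a_0^n\le1$ together with the choice \eqref{eq:cutoff-choice}. If it is not harmless, we would instead bound the error by $\sum_{h\le n/2}d^ha_0^n\le (d^{1/2}a_0)^n$. Since $d^{1/2}a_0<da_0$, this is again dominated by the main term.
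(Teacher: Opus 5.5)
Your proof is correct and follows the paper's argument essentially step by step. You bound each factor from below on $|z|=\mathfrak r$; your bound $1-|x_i|$ is slightly sharper than the paper's $(1-r_i)/(1+r_i)$. You then split at $J=\lfloor\log_d N\rfloor$, using Lemma~\ref{lem:prefix-count} for short lengths and the length budget for long ones.

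One cleanup is needed. Your first bound $O(d^{J/2})=O(\sqrt N)$ on the error part genuinely fails when $\eta\le 1/2$, which happens for example for uniform $\mathbf p$ with $\mathfrak r\le\sqrt d$. Your fallback $K\sum_{n\le J}(\sqrt d\,a_0)^n=o\bigl((da_0)^J/J\bigr)$ is exactly the paper's argument and should be the primary one, not an afterthought.
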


\begin{proof}
Since $q_i\le\rhoP^{\ell_i}$, every zero of the $i$th factor in \eqref{eq:code-factor} has modulus $q_i^{-1/\ell_i}\ge\rhoP^{-1}$.  On $|z|=\mathfrak r$, let $r_i=q_i\mathfrak r^{\ell_i}\le a_0^{\ell_i}$.  Then
\begin{equation*}
 |1+q_iz^{\ell_i}+\cdots+(q_iz^{\ell_i})^{k_i}|
 \ge\frac{1-r_i}{1+r_i},
\end{equation*}
which gives \eqref{eq:Q-inverse-new}.

For \eqref{eq:a-sum-new}, let $a_n$ be the number of selected prefixes of length $n$ and take $J=\lfloor\log_dN\rfloor$.  Since $da_0>1$, Lemma~\ref{lem:prefix-count} gives
\begin{align*}
 \sum_{n\le J}a_na_0^n
 &\le \sum_{n\le J}\frac{(da_0)^n}{n}
   +K\sum_{n\le J}(\sqrt d\,a_0)^n\\
 &=O\left(\frac{(da_0)^J}{J}\right),
\end{align*}
because $\sqrt d\,a_0<da_0$.
For $n>J$, the length budget gives
\begin{equation*}
 \sum_{n>J}a_na_0^n
 \le\frac{a_0^{J+1}}{J+1}\sum_{n>J}na_n
 \le\frac{Na_0^{J+1}}{J+1}.
\end{equation*}
Both terms are $O(N^\eta/\log N)$.
\end{proof}

\begin{proof}[Proof of Theorem~\ref{thm:uniform-renewal-new}]
By Lemma~\ref{lem:context-gf-new} and \eqref{eq:code-factor},
\begin{equation}\label{eq:renewal-factor-new}
 \sum_{L\ge0}B(C,f,L)z^L
 =\frac{1-\phi_f(z)}{(1-z)\mathcal Q_C(z)}.
\end{equation}
Remove the pole at $z=1$:
\begin{equation}\label{eq:E-remainder-new}
 E_{C,f}(z):=
 \frac{1-\phi_f(z)}{(1-z)\mathcal Q_C(z)}
 -\frac{1-q_f}{\lbar_C(1-z)}.
\end{equation}
Since $\phi_f(1)=q_f$ and $\mathcal Q_C(1)=\lbar_C$, the numerator after combining the fractions vanishes at $z=1$; hence the singularity is removable.  Lemma~\ref{lem:zero-free-new} shows that $E_{C,f}$ is analytic in $|z|<\rhoP^{-1}$.

For $|z|=\mathfrak r$, one has $|1-z|\ge\mathfrak r-1$, $|1-\phi_f(z)|\le2$, and $1\le\lbar_C\le N$: Proposition~\ref{prop:recovery} gives $\max_{w\in C}|w|\le1+\sum_i k_i|u_i|\le N$, and $\lbar_C$ is a probability-weighted mean length.  Lemma~\ref{lem:zero-free-new} therefore yields
\begin{equation}\label{eq:E-sup-new}
 \sup_{|z|=\mathfrak r}|E_{C,f}(z)|
 \le K\exp\left(K\frac{N^\eta}{\log N}\right)
\end{equation}
uniformly over the final code, the forbidden word, and every generating history within the length budget.

By \eqref{eq:renewal-factor-new}--\eqref{eq:E-remainder-new},
\begin{equation*}
 B(C,f,L)-\frac{1-q_f}{\lbar_C}=[z^L]E_{C,f}(z).
\end{equation*}
Cauchy's coefficient estimate thus gives, for integer $L\ge\omega_N$,
\begin{equation}\label{eq:renewal-coefficient-new}
 \left|B(C,f,L)-\frac{1-q_f}{\lbar_C}\right|
 \le K\exp\left(-L\log\mathfrak r
                 +K\frac{N^\eta}{\log N}\right).
\end{equation}
For nonempty $f$, $q_f\le\rhoP$, while $q_\varnothing=0$; hence the principal term is at least $(1-\rhoP)/N$.  Because $\eta<\vartheta$ and $L\ge N^\vartheta$, division by the principal term in \eqref{eq:renewal-coefficient-new} gives
\begin{equation*}
 |\mathcal H(C,f,L)-1|\le KN e^{-cN^\vartheta}
\end{equation*}
for all large $N$, uniformly in the stated range.  This proves \eqref{eq:beta-def} and \eqref{eq:beta-rate}.
\end{proof}

\section*{Author Contributions and Use of Artificial Intelligence}
\noindent\textbf{Author contributions.}
Conceptualization and formulation of the problem; supervision and guidance of the proof development; validation and critical review of the mathematical arguments; writing--review and editing.  The author reviewed and approved the final manuscript and assumes full responsibility for its correctness and integrity.

\smallskip
\noindent\textbf{Use of artificial intelligence.}
GPT-5.6 Sol Pro was used as an AI-assisted tool for formal analysis and for developing and drafting the mathematical proofs.  All AI-generated mathematical content was reviewed and validated by the author.  GPT-5.6 Sol Pro is not an author and assumes no responsibility for the content of the manuscript.


\begin{thebibliography}{99}

\bibitem{Titchener1984}
M. R. Titchener,
``Digital encoding by means of new T-codes to provide improved data synchronisation and message integrity,''
\emph{IEE Proc. Comput. Digit. Tech.}, vol.~131, no.~4, pp.~151--153, 1984.

\bibitem{Titchener1996}
M. R. Titchener,
``Generalized T-codes: extended construction algorithm for self-synchronizing variable-length codes,''
\emph{IEE Proc. Commun.}, vol.~143, no.~3, pp.~122--128, 1996.

\bibitem{Gunther1998}
U. G\"unther,
\emph{Robust Source Coding with Generalised T-Codes},
Ph.D. dissertation, The University of Auckland, 1998.

\bibitem{NicolescuTitchener1998}
R. Nicolescu and M. R. Titchener,
``Uniqueness theorems for T-codes,''
\emph{Romanian J. Inf. Sci. Technol.}, vol.~1, no.~3, pp.~243--258, 1998.

\bibitem{TitchenerEtAl2005}
M. R. Titchener, R. Nicolescu, L. Staiger, T. A. Gulliver, and U. Speidel,
``Deterministic complexity and entropy,''
\emph{Fundam. Inform.}, vol.~64, no.~1--4, pp.~443--461, 2005.

\bibitem{Kolmogorov1965}
A. N. Kolmogorov,
``Three approaches to the quantitative definition of information,''
\emph{Probl. Inf. Transm.}, vol.~1, no.~1, pp.~1--7, 1965.

\bibitem{LempelZiv1976}
A. Lempel and J. Ziv,
``On the complexity of finite sequences,''
\emph{IEEE Trans. Inf. Theory}, vol.~22, no.~1, pp.~75--81, Jan. 1976.

\bibitem{ZivLempel1978}
J. Ziv and A. Lempel,
``Compression of individual sequences via variable-rate coding,''
\emph{IEEE Trans. Inf. Theory}, vol.~24, no.~5, pp.~530--536, Sep. 1978.

\bibitem{YangSpeidel2005Jucs}
J. Yang and U. Speidel,
``A fast T-decomposition algorithm,''
\emph{J. Universal Comput. Sci.}, vol.~11, no.~6, pp.~1083--1101, 2005,
doi: 10.3217/jucs-011-06-1083.

\bibitem{YangSpeidel2005}
J. Yang and U. Speidel,
``A T-decomposition algorithm with $O(n\log n)$ time and space complexity,''
in \emph{Proc. IEEE Int. Symp. Inf. Theory (ISIT)}, 2005, pp.~23--27,
doi: 10.1109/ISIT.2005.1523285.

\bibitem{RebenichEtAl2014}
N. Rebenich, U. Speidel, S. W. Neville, and T. A. Gulliver,
``FLOTT---A fast, low memory T-transform algorithm for measuring string complexity,''
\emph{IEEE Trans. Comput.}, vol.~63, no.~4, pp.~917--926, Apr. 2014,
doi: 10.1109/TC.2013.29.

\bibitem{Speidel2008}
U. Speidel,
``On the bounds of the Titchener T-complexity,''
in \emph{Proc. 6th Int. Symp. Commun. Syst., Netw. Digit. Signal Process.}, 2008, pp.~321--325.

\bibitem{SpeidelGulliver2012}
U. Speidel and T. A. Gulliver,
``An analytic upper bound on T-complexity,''
in \emph{Proc. IEEE Int. Symp. Inf. Theory (ISIT)}, 2012, pp.~2706--2710.

\bibitem{ClarkTeutsch2015}
G. Clark and J. Teutsch,
``Maximizing T-complexity,''
\emph{Fundam. Inform.}, vol.~139, no.~1, pp.~1--19, 2015.

\bibitem{HamanoYamamoto2009}
K. Hamano and H. Yamamoto,
``A differential equation method to derive the formulas of the T-complexity and the LZ-complexity,''
in \emph{Proc. IEEE Int. Symp. Inf. Theory (ISIT)}, 2009, pp.~625--629.

\bibitem{Hamano2009}
K. Hamano,
\emph{Analysis and Applications of the T-complexity},
Ph.D. dissertation, The University of Tokyo, 2009.

\bibitem{GulliverSpeidel2012Random}
T. A. Gulliver and U. Speidel,
``On the ratio between the maximal T-complexity and the T-complexity of random strings,''
in \emph{Proc. Int. Symp. Inf. Theory Appl. (ISITA)}, 2012, pp.~498--500.

\bibitem{AbramowitzStegun1972}
M. Abramowitz and I. A. Stegun, Eds.,
\emph{Handbook of Mathematical Functions}.
New York, NY, USA: Dover, 1972.

\bibitem{BoucheronLugosiMassart2013}
S. Boucheron, G. Lugosi, and P. Massart,
\emph{Concentration Inequalities: A Nonasymptotic Theory of Independence}.
Oxford, U.K.: Oxford Univ. Press, 2013.

\end{thebibliography}
\end{document}